\definecolor{blue}{rgb}{0,0,0.9}
\definecolor{red}{rgb}{0.9,0,0}
\definecolor{green}{rgb}{0,0.9,0}
\theoremstyle{plain}
\newtheorem{remark}{Remark}
\newtheorem{example}{Example}
\newtheorem{assumption}{Assumption}
\newtheorem{theorem}{Theorem}
\newtheorem{lemma}{Lemma}
\def\H{\mathcal{H}}
\def\<{\big\langle}
\def\>{\big\rangle}
\def\E{\mathcal{E}}
\def\M{\mathcal{M}}
\def\A{ \mathcal{A} }
\def\B{\mathcal{B}}
\def\C{\mathcal{C}}
\def\K{\mathcal{K}}
\def\L{\mathcal{L}}
\def\F{\mathcal{F}}
\def\I{\mathcal{I}}
\def\R{\mathbb{R}}
\def\bN{\mathbb{N}}
\def\G{\mathcal{G}}
\def\N{\mathcal{N}}
\def\S{\mathbb{S}}
\def\O{\mathcal{O}}
\def\({\left(}
\def\){\right)}
\def\hR{\widehat{R}}
\def\tb#1{\textbf{#1}}
\def\RR{{\widehat R\widehat R^\top}}
\def\tol{\tt{tol}}
\def\timelimit{\tt{TimeLimit}}
\DeclareMathOperator{\diag}{diag}
\let\svthefootnote\thefootnote
\newcommand\blankfootnote[1]{
\let\thefootnote\svthefootnote
}
\newtcolorbox{boxA}{
    fontupper = \bf,
    boxrule = 1.5pt,
    colframe = black 
}
\newcommand{\xdownarrow}[1]{%
  {\left\downarrow\vbox to #1{}\right.\kern-\nulldelimiterspace}
}
\begin{document}
        \title{RiNNAL+: a Riemannian ALM Solver for SDP-RLT Relaxations of Mixed-Binary Quadratic Programs}
        \author{
        Di Hou\thanks{Department of Mathematics, National
         University of Singapore, Singapore
         119076 ({\tt dihou@u.nus.edu}).
         }, \quad 
	Tianyun Tang\thanks{Institute of Operations Research and Analytics, National
         University of Singapore, Singapore
         119076 ({\tt ttang@u.nus.edu}).
         }, \quad 
	  Kim-Chuan Toh\thanks{Department of Mathematics, and 
         Institute of Operations Research and Analytics, 
         National University of Singapore, 
         Singapore
         119076 ({\tt mattohkc@nus.edu.sg}). 
         }
 	}
	\date{\today}
\maketitle


\begin{abstract}

Doubly nonnegative (DNN) relaxation usually provides a tight lower bound for a mixed-binary quadratic program (MBQP). However, solving DNN problems is challenging because: (1) the problem size is $\Omega((n+l)^2)$ for an MBQP with $n$ variables and $l$ inequality constraints, and (2) the rank of optimal solutions cannot be estimated a priori due to the absence of theoretical bounds. In this work, we propose RiNNAL+, a Riemannian augmented Lagrangian method (ALM) for solving DNN problems. 
We prove that the DNN relaxation of an MBQP, with matrix dimension $(n+l+1)$, is equivalent to the SDP-RLT relaxation (based on the reformulation-linearization technique) with a smaller matrix dimension $(n+1)$.
In addition, we develop a hybrid method that alternates between two phases to solve the ALM subproblems.
In phase one, we apply low-rank matrix factorization and random perturbation to transform the feasible region into a lower-dimensional manifold so that we can use the Riemannian gradient descent method.
In phase two, we apply a single projected gradient step to update the rank of the 
underlying variable and escape from spurious local minima arising in the first phase if necessary. 
To reduce the computation cost of the projected gradient step,
we develop pre-processing and warm-start techniques for acceleration.
Unlike traditional rank-adaptive methods that require extensive parameter tuning, our hybrid method requires minimal tuning.
Extensive experiments confirm the efficiency and robustness of RiNNAL+ in solving various classes of large-scale DNN problems.
\end{abstract}


\section{Introduction}\label{sec-intro}


\subsection{Mixed-binary nonconvex quadratic program}

In this paper, we consider the following mixed-binary quadratic program:
\begin{equation}\label{MBQP}
v^{P_1}:=\min\left\{x^{\top} Q x+2 c^{\top}x :  Ax=b,\ Gx\leq d,\ x_i \in\{0,1\},\ \forall i \in B,\ x\in \R^{n}_+\right\},\tag{MBQP}
\end{equation}
where $Q\in \mathbb{S}^{n},\,c\in\R^n,\, A \in \R^{m\times n},\, G \in \R^{l\times n},\, b \in \R^m,\, d \in \R^l$, $B\subseteq[n]$ is the index set of $p$ binary variables. Without loss of generality, we assume that $A$ has full row rank and $b\geq 0.$ Problem \eqref{MBQP} is general because it includes both binary and continuous nonnegative variables, as well as equality and inequality constraints. The objective function features both quadratic and linear terms, allowing it to model a broad range of optimization problems. This formulation encompasses key problems such as 0-1 mixed-integer programming (MIP), non-convex quadratic programming (QP), binary integer nonconvex quadratic programming (BIQ), and more.

Since \eqref{MBQP} is generally nonconvex and NP-hard, solving it to global optimality is computationally intractable in most cases. As a result, numerous convex relaxations have been developed to approximately solve it efficiently \cite{bomze2002solving,bundfuss2009adaptive}. 
Among these, semidefinite programming (SDP) relaxations have been particularly effective due to their ability to provide tight lower bounds and, in some cases, exact solutions under specific conditions \cite{goemans1995improved,wang2022tightness,burer2020exact,arima2024exact}.
Beyond stand-alone approximations, convex relaxations also play a crucial role in global optimization frameworks, such as branch-and-bound and branch-and-cut methods \cite{lawler1966branch,mitchell2002branch}. In these frameworks, relaxations are used to derive valid bounds that help prune the search space, significantly improving computational efficiency. The strength of the relaxation directly impacts the overall performance of such global approaches, making SDP relaxations a key ingredient in state-of-the-art exact algorithms for nonconvex quadratic optimization \cite{buchheim2013semidefinite,gusmeroli2022biqbin,krislock2017biqcrunch}.

In the next subsection, we introduce several widely used SDP relaxations, with a particular emphasis on the SDP-RLT relaxation. We discuss their formulations, the quality of their respective bounds, and numerical challenges associated with their implementation in deriving tight lower bounds for \eqref{MBQP}.

\begin{remark}\label{remark-QCQP}
The algorithm we propose in this paper is not limited to just solving the DNN relaxation of \eqref{MBQP}. 
It can also handle the relaxations of QCQP problems 
with additional general quadratic constraints:
\begin{equation}\label{eq-QCQP}
\begin{aligned}
x^\top A_i x + b_i^\top x + c_i &\leq 0, \quad i=1,\dots,m_1, \\
x^\top A_j x + b_j^\top x + c_j &= 0,     \quad j=m_1+1,\dots,m_2,
\end{aligned}
\end{equation}
where $A_i\not=0_{n\times n}$ for all $i\in[m_2]$. By introducing the new variable $X\in \S^{n}_+$ to represent $xx^\top$, we obtain the following convex relaxation of \eqref{eq-QCQP}:
\begin{align}
    \langle A_i, X \rangle + b_i^\top x + c_i &\leq 0, \quad i = 1, \dots, m_1, \label{ieq-QCQP-lift} \\
    \langle A_j, X \rangle + b_j^\top x + c_j &= 0, \quad j = m_1 + 1, \dots, m_2. \label{eq-QCQP-lift}
\end{align}
For instance, complementarity constraints are a special class of quadratic constraints defined by $x_ix_j = 0$ for $(i,j) \in E$, where $E \subseteq \{(i,j) \mid 1 \leq i < j \leq n\}$ represents the set of 
complementary index pairs. By lifting these constraints using \eqref{eq-QCQP-lift}, we obtain $X_{ij}=0,\, \forall (i,j)\in E$.
By incorporating \eqref{ieq-QCQP-lift} and \eqref{eq-QCQP-lift} into the set $\I$ and $\E$ defined in the subsequent subsection, respectively, our results for \eqref{MBQP} extend naturally to general QCQP problems.
For simplicity, we exclude additional quadratic constraints of the form \eqref{eq-QCQP} in \eqref{MBQP} throughout this paper.
\end{remark}


\subsection{SDP-RLT relaxations of (\ref{MBQP})}\label{subsec-SDPRLT}

\noindent\tb{Shor relaxation.} A commonly used convex relaxation of general QCQP problems is the Shor relaxation \cite{shor1990dual}, which is obtained by replacing the quadratic term $xx^\top$ with a matrix $X$ and introducing a positive semidefinite constraint $X-xx^\top\succeq 0$. The explicit formulation of the Shor relaxation \eqref{SHOR} of \eqref{MBQP} is given in Subsection \ref{subsec-relaxation}. While the Shor relaxation can be exact under certain conditions \cite{wang2022tightness,burer2020exact}, it often yields weak lower bounds for most instances of \eqref{MBQP}. In some cases, the bound can even be unbounded below. Therefore, additional constraints are necessary to strengthen the relaxation and improve bound quality.
\medskip

\noindent\tb{DNN relaxation.} To address this limitation, researchers have explored the doubly nonnegative (DNN) relaxation, which extends the Shor relaxation by incorporating nonnegativity constraints on $X$. Initially proposed by Burer \cite{burer2009copositive, burer2010optimizing} and further analyzed in \cite{kim2022doubly, bomze2017fresh, RiNNAL}, the DNN relaxation generally provides a significantly tighter relaxation than the Shor relaxation. The explicit formulation of the DNN relaxation \eqref{DNN} of \eqref{MBQP} is given in Subsection \ref{subsec-relaxation}. However, deriving \eqref{DNN} requires converting all inequality constraints into equality constraints via the introduction of slack variables. This transformation can substantially increase both the matrix dimension and the number of constraints in the resulting relaxation. For instance, if the number of inequality constraints $l$ equals the variable dimension $n$, the matrix dimension of the DNN relaxation increases to $2n$, significantly increasing computational complexity.
\medskip

\noindent\tb{SDP-RLT relaxation.} To address the high dimensionality of \eqref{DNN} while maintaining its bound quality, we consider the SDP-RLT relaxation of \eqref{MBQP}. Unlike \eqref{DNN}, which increases the problem size by introducing additional slack variables, the SDP-RLT relaxation preserves the original problem dimension by deriving quadratic constraints from the linear constraints of \eqref{MBQP}. Specifically, it integrates the reformulation-linearization technique (RLT) \cite{sherali2007rlt,qiu2024polyhedral} with the Shor relaxation \eqref{SHOR}, yielding a tighter bound than \eqref{SHOR}. In fact, we will further show that it achieves the same bound as \eqref{DNN}.

To systematically construct the SDP-RLT relaxation of \eqref{MBQP}, we first apply the RLT technique to obtain the quadratic constraints. These constraints are derived by multiplying pairs of linear inequalities, including $x \geq 0$, and by multiplying each equality constraint with a decision variable. Next, we replace the quadratic term $xx^\top$ with $X$ and introduce the variable $z = 1$ to homogenize all constraints. The derivation process is shown as follows:
\begin{tcolorbox}[colframe=black, colback=white, boxrule=1pt, arc=2mm, top=0pt]
\begin{gather*}
 Ax - b = 0, \quad d - Gx \geq 0, \quad x \geq 0  \\
 \Big\downarrow  \text{ (RLT)}\phantom{\text{.}}  \\
 (Ax - b)x^\top = 0, \quad (d - Gx)x^\top \geq 0, \quad (d - Gx)(d - Gx)^\top \geq 0, \quad xx^\top \geq 0 \\
  \Big\downarrow \text{ (LIFT)}  \\
 AX-bx^\top = 0, \quad dx^\top-GX \geq 0, \quad \G(Y) \geq 0, \quad X \geq 0,\quad z=1,
\end{gather*}
\end{tcolorbox}
\noindent where $Y:=\begin{bmatrix}z& x^\top \\ x & X\end{bmatrix}\in  \mathbb{S}^{n+1}$, and  the mapping $\G:\S^{n+1}\rightarrow\S^{l}$ is defined as 
\begin{equation}\label{def-G}
\G(Y):=GXG^\top -Gxd^\top -dx^\top G^\top+zdd^\top.
\end{equation}
Finally, by incorporating these constraints into the Shor relaxation, we obtain the SDP-RLT relaxation as follows:

\begin{equation}
\label{SDP-RLT}
v^{\text{SDP-RLT}}:=\min\left\{
\<C,Y\>  :\  
Y\in \F\cap\I_R\cap \mathbb{S}_{+}^{n+1} \cap\mathbb{N}^{n+1}
\right\},
\tag{SDP-RLT}
\end{equation}
where the cost matrix $C=[0,c^\top; c, Q]\in\S^{n+1}$. The sets defined by equality and inequality constraints, denoted by $\F$ and $\I_R$ respectively, are given by 
\begin{align*}
\F&:= \left\{\begin{bmatrix}z& x^\top \\ x & X\end{bmatrix}\in  \mathbb{S}^{n+1} :  Ax=b,\ AX=bx^\top,\ x_i =X_{ii}, \  \forall i \in B,\ z=1\right\},\\
\I_R    &:= \left\{\begin{bmatrix}z& x^\top \\ x & X\end{bmatrix}\in  \mathbb{S}^{n+1} :  \G(Y)\geq 0,\ dx^\top-GX\geq 0,\ zd-Gx\geq 0 \right\}.
\end{align*}
Here $\mathbb{S}_{+}^{n+1}$ denotes the cone of positive semidefinite matrices in $\mathbb{S}^{n+1}$, and $\mathbb{N}^{n+1}$ denotes the cone of nonnegative matrices in $\mathbb{R}^{(n+1) \times(n+1)}$. Note that the last constraint $zd-Gx\geq 0$ in $\I_R$ is redundant, provided that at least one component of $x$ is bounded in the feasible set $\{x\in\R^n_+:\ Ax=b,\ Gx\leq d\}$, see \cite[Proposition 8.1]{sherali2013reformulation}.
Compared with the lower bounds provided by \eqref{SHOR} and \eqref{DNN}, we prove in Theorem \ref{thm-1} that
\begin{equation*}
v^{\text{SHOR}} \leq 
v^{\text{DNN}} =
v^{\text{SDP-RLT}}  \leq 
v^{P_1}.
\end{equation*}
Thus, \eqref{SDP-RLT} achieves the same lower bound as \eqref{DNN} while maintaining the same variable dimension as \eqref{SHOR}. This result can be applied to two important problem classes: the strengthened MBQP and sparse QP with an $l_0$-norm constraint, as discussed in subsection~\ref{subsec-application}. 
A detailed theoretical and numerical comparison of these relaxations is provided in Section \ref{sec-relaxations} and Subsection \ref{subsec-tightness-numerical}, respectively. Our analysis demonstrates that \eqref{SDP-RLT} is generally more computationally efficient than \eqref{DNN}. Given this advantage, the primary focus of this paper is to develop an efficient method for solving the \eqref{SDP-RLT} relaxation. The next subsection reviews existing algorithms for \eqref{SDP-RLT} and identifies their limitations.


\subsection{Challenges in solving (\ref{SDP-RLT})}

Our main question is how to efficiently solve \eqref{SDP-RLT}. Renowned SDP solvers like SDPT3 \cite{TTT}, SeDuMi \cite{sturm1999using}, and DSDP \cite{benson2008algorithm}, which utilize interior point methods, are rarely used for solving SDP-RLT or DNN problems due to their high computational costs per iteration, scaling as $\mathcal{O}(n^6)$. Instead, first-order methods based on the alternating direction method of multipliers (ADMM) \cite{SDPNALp1,chen2017efficient} are preferred for these problems. Although solvers such as SDPNAL+ \cite{SDPNAL,SDPNALp1,SDPNALp2}, which employ the augmented Lagrangian method (ALM), have been quite effective in solving medium-size problems (with $n \leq 2000$), solving large-scale instances (say with $n \geq 3000$) remains a highly challenging task. This difficulty arises primarily from the costly eigenvalue decompositions required by ADMM-type or ALM-type methods to perform projections onto $\S^n_+$, as well as slow convergence issues caused by the degeneracy of solutions. 

A closely related work is \cite{RiNNAL}, where the authors proposed RNNAL (which we rename as RiNNAL here for ease of pronunciation), a method for solving DNN problems by leveraging their solutions' potential low-rank property. RiNNAL is a globally convergent Riemannian ALM that penalizes the nonnegativity and complementarity constraints while preserving all other constraints in the ALM subproblem. After applying the low-rank decomposition to the ALM subproblem, the resulting feasible region becomes an algebraic variety with favorable geometric properties. In \cite{RiNNAL}, it was demonstrated that RiNNAL can substantially outperform other
state-of-the-art solvers in solving large-scale DNN problems. 
However, RiNNAL still has several limitations:

\begin{enumerate} 
\item RiNNAL is not applicable for solving general \eqref{SDP-RLT} problems with the inequality constraints imposed by $Y\in\I_R$, which arise from the conditions $Gx\leq d$ and $x\geq 0$. While RiNNAL can solve \eqref{DNN}
(whose equivalent reformulation is \eqref{SDP-RLT}), the variable dimension of \eqref{DNN} increases significantly with the number of inequality constraints, resulting in higher computational costs.

\item Most low-rank decomposition algorithms  \cite{wang2023decomposition, wang2023solving, tang2024feasible,RiNNAL,monteiro2024low} including RiNNAL require frequently tuning the rank of the factorized variable. On one hand, when the dual infeasibility of the KKT system is large, the rank needs to be increased to escape from the saddle points of the factorized ALM subproblem. However, the appropriate rank increment is a challenging hyperparameter to tune: overly large increments will unnecessarily enlarge the problem size, while 
insufficient increments will require repeated updates to achieve convergence. On the other hand, rank reduction is equally important for saving memory and reducing computational costs, typically achieved by dropping near-zero singular values and their corresponding singular vectors from the factored variable. However, selecting the threshold for this reduction is another difficult hyperparameter to tune. A large threshold may cause significant changes to the iteration matrix, leading to jumps in the objective function and harming convergence, while a small threshold results in slow rank reduction and incurs unnecessary computational costs. 
Both the frequency and magnitude of rank updates are often performed heuristically, varying across different cases, and the parameter selection can greatly affect the performance of RiNNAL.

\item When encountering non-smooth points in the Riemannian gradient descent inner loop for solving the ALM subproblem, RiNNAL needs to reformulate the DNN relaxation problem equivalently into a higher-dimensional problem to ensure the linearly independent constrained qualification (LICQ). However, solving the higher-dimensional reformulated problem significantly increases the corresponding computational time.
\end{enumerate}

The goal of this paper is to propose a suite of techniques, from various perspectives, to resolve the above-mentioned issues and enhance the computational performance of our previous algorithm RiNNAL. We will state our techniques and contributions in the next two subsections.


\subsection{A hybrid method for solving ALM subproblems}

In Section \ref{sec-alg}, we propose RiNNAL+, an enhanced version of our previous algorithm RiNNAL to solve general SDP relaxation problems including both \eqref{SDP-RLT} and \eqref{DNN}. The similarities and distinctions in applying RiNNAL+ to these relaxations are discussed in Subsections \ref{subsec-alg-equivalence} and \ref{subsec-pp}. For clarity, we use \eqref{SDP-RLT} as a representative example to illustrate the core ideas of RiNNAL+ in this subsection. The primary innovation in RiNNAL+ is its hybrid approach, which consists of two phases for solving the following ALM subproblem:
\begin{equation}
    \min\left\{\<C,Y\>+\frac{\sigma}{2}\| \Pi_{+}( \sigma^{-1}\mu-(\C(Y)-l)) \|^2 :\ Y\in \F\cap \S^{n+1}_+\right\},\label{CVX}
    \tag{CVX}
\end{equation}
where $\C(Y)\geq l$ denotes all the inequality constraints of \eqref{SDP-RLT} imposed by $Y\in \I_R\cap \mathbb{N}^{n+1}$, $\mu$ is the corresponding Lagrangian 
dual multiplier, and $\sigma $ is the penalty parameter. RiNNAL+ switches between two phases to efficiently solve \eqref{CVX}.

\medskip


\noindent\tb{Low-rank phase.} Suppose that the subproblem \eqref{CVX} has an optimal solution of rank $r$, where $r$ is a positive integer. To fully utilize the potential low-rank property of the solutions to \eqref{CVX}, we apply a special low-rank factorization proposed in \cite{RiNNAL} to the variable $Y$ and simplify \eqref{CVX} to the following equivalent model:
\begin{equation}
    \min\left\{\<C,\hR\hR^\top\>+\frac{\sigma}{2}\| \Pi_{+}( \sigma^{-1}\mu-(\C(\hR\hR^\top)-l)) \|^2 :\ R\in \M_r \right\},\label{LR}
\tag{LR}
\end{equation}
where $R\in \R^{n\times r}$ is the matrix variable, $\hR:=[e_1^\top;R]\in\R^{(n+1)\times r}$ is the factor matrix with $e_1$ being the first standard unit vector in $\R^r$, and $\M_r$ (derived from the low-rank formulation
of ${\cal F}\cap \S^{n+1}_+$) is defined as 
\begin{equation*}
\mathcal{M}_r := \left\{R \in \mathbb{R}^{n \times r} :\  AR=be_1^\top,\ \operatorname{diag}_B(RR^{\top})=R_B e_1 \right\}.
\end{equation*}
We refer the reader to Subsection~\ref{subsec-notations} for the meaning of the notation $\operatorname{diag}_B(\cdot)$ and $R_B$.
Here and in other parts of this paper, given two matrices $P$ and 
$Q$ with the same number of 
columns, the notation $[P;Q]$ denotes the matrix that is obtained by appending $Q$ to the 
last row of $P$. The set $\M_r$ has many favorable properties so that the corresponding 
projection and retraction can be computed efficiently. Although this has been discussed in detail in \cite{tang2024solving,RiNNAL}, for reader's convenience, we summarize these properties below: 
\begin{enumerate}
    \item $\M_r$ only contains only $mr$ linear constraints and $p$ 
    spherical constraints, making it easier to handle than the original one 
    ($\mathcal{F}\cap\mathbb{S}^{n+1}_+$) consisting of a positive semidefinite constraint and $m(n+1)+p+1$ equality constraints.
    \item Reformulating constraints into $\M_r$ helps mitigate the violation of Slater’s condition for the primal \eqref{SDP-RLT} problem.
    \item The metric projection onto the algebraic variety $\M_r$, although non-convex, can be transformed into a tractable convex optimization problem under the LICQ condition.
\end{enumerate}

Based on the good geometric properties of $\M_r$, RiNNAL applies the Riemannian gradient descent (RGD) method to solve \eqref{LR}. However, as mentioned in the previous subsection, the reformulation technique to ensure LICQ property will increase the dimension and reduce the algorithm's efficiency. To overcome this issue, in RiNNAL+, we use a random perturbation strategy to directly achieve smoothness without increasing the dimensionality. This approach, initially studied in \cite{tang2023feasible}, is detailed further in Subsection \ref{subsec-perturnation}. The low-rank phase plays a central role in reducing the objective value of \eqref{CVX}.

\medskip


\noindent\tb{Convex lifting phase.} 
Once the iterate $R_t$ in the low-rank phase reaches near-stationarity, the algorithm transitions to the convex lifting phase. In this phase, we perform a projected gradient (PG) step on \eqref{CVX}, initializing from $Y_t = \widehat{R}_t \widehat{R}^\top_t$. The PG step employs a semi-smooth Newton (SSN) method to compute the next iterate $Y_{t+1}$. After that, we factorize $Y_{t+1}$ to get $R_{t+1}$,  which serves as the starting point for the next low-rank phase. This convex lifting phase has two advantages:
\begin{enumerate}
    \item Unlike the rank-tuning strategies discussed in the previous subsection, the PG step automatically updates the rank without requiring parameter tuning. In our numerical experiments, we observe that this approach performs remarkably well and typically identifies the correct rank after just a few PG steps. For instance, we test this on a BIQ problem with dimension $n = 500$. As shown in Figure~\ref{fig-rank}, the evolution of the rank demonstrates that the PG method converges to the correct rank significantly faster than traditional rank-tuning strategies.
    \begin{figure}[h!]
    \centering
    \begin{subfigure}{0.49\textwidth}
        \centering
        \includegraphics[width=\linewidth]{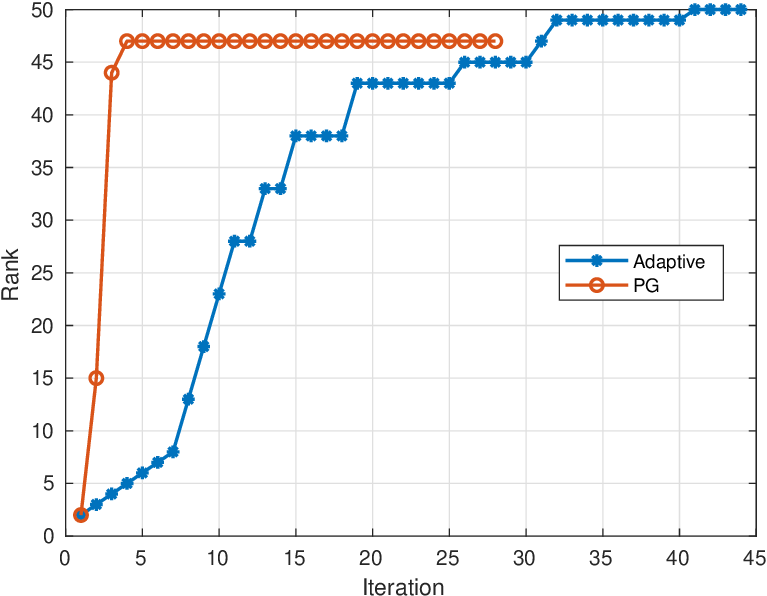}
        \caption{Initial rank $r=2$.}
        \label{fig:pp-QKP}
    \end{subfigure}
    \hfill
    \begin{subfigure}{0.49\textwidth}
        \centering
        \includegraphics[width=\linewidth]{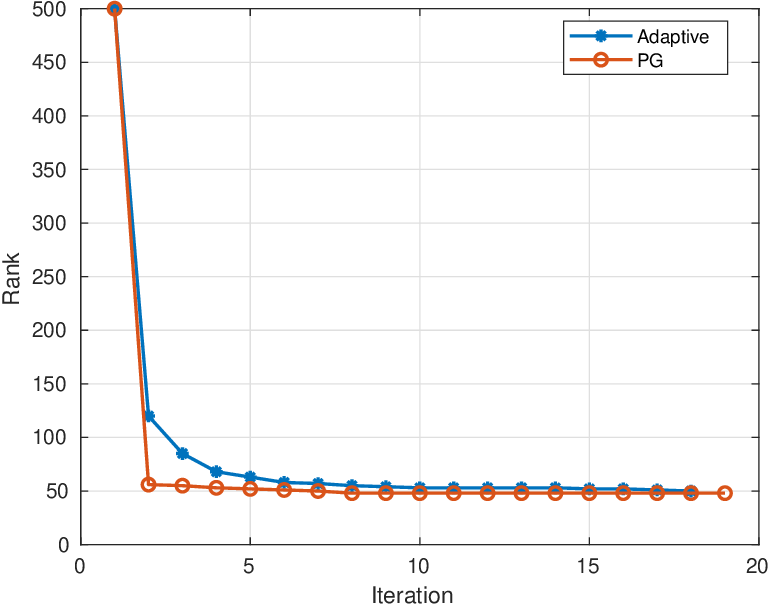}
        \caption{Initial rank $r=500$.}
        \label{fig:pp-BIQ}
    \end{subfigure}
    \caption{Comparison of rank evolution between PG and traditional rank-tuning strategies.}
    \label{fig-rank}
    \end{figure}
    \item The PG step consistently decreases the function value, whereas the rank-tuning method may increase the function value when we truncate small singular values. This monotonic decrease ensures the convergence of the subproblem.
\end{enumerate}
The idea of using a PG step to update rank has been explored in the earlier work \cite{lee2022escaping}. In this paper, we further develop a preprocessing technique, which will be introduced in subsection \ref{subsec-preprocess}, to significantly reduce the computational cost of the PG step. 
This preprocessing step is particularly beneficial for SDP problems with constraints $Y\in \F\cap\S^{n+1}_+$, as it enables the metric projection onto the feasible region in the PG step to be efficiently solved via the SNN method with significantly fewer iterations. Since the set $\F\cap\S^{n+1}_+$ is widely encountered in SDP relaxations with RLT constraints, this technique can be used as a subroutine in various solvers to efficiently handle such constraints.
Beyond preprocessing, we also develop a warm-start technique, which will be discussed in subsection~\ref{subsec-recover-dual}. This warm-start technique recovers the dual variable from the low-rank phase and uses it as the initial value for the PG step in the convex lifting phase. This warm-start technique can substantially reduce the time required to solve the projection subproblem, further improving the efficiency of RiNNAL+.


\subsection{Summary of our contributions}

Our contributions in this paper are summarized as follows:
\begin{enumerate}
    \item We provide a comprehensive comparison of the bound tightness, constraint type, and numerical behavior among several commonly used SDP relaxations of \eqref{MBQP}, namely, \eqref{SHOR}, \eqref{SDP-RLT}, \eqref{DNN} and \eqref{COMP}. In particular, we establish the theoretical equivalence between the \eqref{SDP-RLT} and \eqref{DNN} relaxations.
    \item We introduce RiNNAL+,  a Riemannian ALM to solve general SDP relaxation problems \eqref{prob-DNN-general}, which encompasses \eqref{SDP-RLT}, \eqref{DNN}, and \eqref{COMP} as special cases. 
    Our approach employs a hybrid two-phase framework to efficiently solve the ALM subproblem: the low-rank phase reduces the objective value, while the convex lifting phase automatically adjusts the rank of iterates to reduce computational costs and ensure global convergence.
    \item Unlike prior methods that require reformulating \eqref{SDP-RLT} into a higher-dimensional SDP problem \cite{RiNNAL}, we circumvent nonsmoothness issues by introducing a small random perturbation to the constraints in $\M_r$, thereby improving computational efficiency without increasing dimensionality.
    \item In contrast to existing low-rank algorithms that adaptively adjust the solution rank \cite{wang2023decomposition, wang2023solving, tang2024feasible,RiNNAL}, we employ a single PG step to automatically tune the rank for the low-rank phase. This strategy reduces variable dimensionality, helps escape saddle points, and ensures global convergence. Additionally, we enhance the efficiency of the PG step through specially designed preprocessing and warm-start techniques.
    \item We conduct numerous numerical experiments to evaluate the performance of our RiNNAL+ algorithm for solving SDP relaxations of various MBQP problem classes. While previous SDP solvers based on low-rank factorization perform well only when the problem has a low-rank optimal solution, RiNNAL+ demonstrates strong performance even for \eqref{SDP-RLT} and \eqref{DNN} relaxations whose optimal solution ranks are not necessarily small.
\end{enumerate}


\subsection{Organization}

This paper is structured as follows.
Section~\ref{sec-relaxations} introduces several SDP relaxations of MBQP, including \eqref{SHOR}, \eqref{SDP-RLT}, and \eqref{DNN}, and establishes the theoretical equivalence between the last two relaxations.
Section~\ref{sec-alg} presents RiNNAL+, a hybrid augmented Lagrangian method designed to solve general SDP relaxations of QCQP. 
Section~\ref{sec-acceleration} introduces computational enhancements, such as preprocessing, random perturbations, and warm-start techniques, to improve the efficiency and stability of RiNNAL+.
Section~\ref{sec-experiments} provides extensive numerical experiments to evaluate the performance of RiNNAL+.
Finally, we conclude the paper in Section~\ref{sec-conclusion}.


\subsection{Notations}\label{subsec-notations}

Let $\langle A, B\rangle := \operatorname{Tr}\left(A B^{\top}\right)$ denote the matrix inner product and $\|\cdot\|$ be its induced Frobenius norm in $\mathbb{S}^n$. Define $e$ as a column vector of all ones, and $e_1$ as a column vector with 1 as its first entry and zero otherwise. 
Let $\R^n_+$ and $\R^n_{++}$ denote the sets of nonnegative and positive real vectors in $\R^n$, respectively, and let $\Pi_+(\cdot)$ be the projection onto $\R^n_+$.
For a matrix $X \in \mathbb{R}^{m \times n}$, $\operatorname{vec}(X)$ denotes the vector in $\mathbb{R}^{mn}$ formed by stacking the columns of $X$. 
We use $\circ$ to denote the element-wise multiplication operation between two vectors/matrices of the same size. 
We use $\delta_{\mathcal{C}}(\cdot)$ to denote the indicator function of a set $\mathcal{C}$. Let $[n] := \{1,2, \ldots, n\}$ for any positive integer $n$. For a matrix $X\in\S^{n+1}$, we denote its block decomposition as follows:
\begin{equation}\label{notation-block}
    X=\begin{bmatrix}
        X_{11}&X_{12}\\
        X_{21}&X_{22}
    \end{bmatrix}\in \begin{bmatrix}
        \mathbb{R}&\mathbb{R}^{1\times n}\\
        \mathbb{R}^{n\times 1}&\S^{n}
    \end{bmatrix}.
\end{equation}
Next, we define some operators. Given an index set $B \subseteq[n]$ with its cardinality denoted by $|B|$, define $\operatorname{diag}_B: \mathbb{R}^{n \times n} \rightarrow \mathbb{R}^{|B|}$ such that $\operatorname{diag}_B(X)=\left(X_{ii}\right)_{i \in B}$. 
The index set $B$ is omitted if $B=[n]$. 
For a matrix $R \in \mathbb{R}^{n \times r}$, let $R_i\in\R^{1\times r}$ denote its $i$-th row, and let $R_B \in \mathbb{R}^{|B| \times r}$ be the submatrix of $R$ consisting of the rows indexed by $B$. Define $\widehat R:= (e^\top_1;R)$, which augments $R$ with the first standard basis row vector.


\section{Relaxations}\label{sec-relaxations}

In this section, we first introduce three commonly used SDP-type relaxations: the Shor relaxation, the SDP-RLT relaxation, and the DNN relaxation in subsection \ref{subsec-relaxation}. 
We then demonstrate that the latter two relaxations are actually equivalent 
in subsection \ref{Tightness comparison}. Finally, we apply these findings to two significant problem classes: the strengthened MBQP and sparse quadratic programming (QP) with an $l_0$-norm constraint
in subsection \ref{subsec-application}.


\subsection{Relaxation formulation}
\label{subsec-relaxation}

\noindent\tb{Shor relaxation.}
By replacing the quadratic term $xx^\top$ with a matrix $X$ and introducing a positive semidefinite constraint $X-xx^\top\succeq 0$, we obtain the Shor relaxation \cite{shor1990dual} of \eqref{MBQP} as follows:
\begin{equation}\label{SHOR}
    v^{\text{SHOR}}:=\min\left\{  
    \<C,Y\> :\  
    Y\in\F_S\cap\I_S\cap\mathbb{S}_{+}^{n+1}
    \right\},\tag{SHOR}
\end{equation}
where $C$ is defined after \eqref{SDP-RLT}, $\F_S$ and $\I_{S}$ are defined by
\begin{align*}
\F_S&:= \left\{\begin{bmatrix}z& x^\top \\ x & X\end{bmatrix}\in  \mathbb{S}^{n+1} :\  Ax=b,\ x_i =X_{ii}, \  \forall i \in B,\ z=1\right\},\\
\I_S&:= \left\{\begin{bmatrix}z& x^\top \\ x & X\end{bmatrix}\in  \mathbb{S}^{n+1} :\ zd-Gx\geq 0,\ x\geq 0 \right\}.
\end{align*}
The Shor relaxation provides a lower bound $v^{\text{SHOR}}$ of the optimal value $v^{P_1}$ of \eqref{MBQP}. However, the gap between $v^{\text{SHOR}}$ and $v^{P_1}$ may be large, and in certain cases, the lower bound $v^{\text{SHOR}}$ can even be unbounded below. Thus, we need to add extra constraints to make the relaxation tighter.

\bigskip


\noindent\tb{SDP-RLT relaxation.}
The SDP-RLT relaxation integrates the reformulation-linearization technique  (RLT) \cite{sherali2007rlt} with Shor relaxation \eqref{SHOR}, hence providing a tighter bound. The RLT technique generates additional quadratic constraints implied by the linear constraints of \eqref{MBQP}. The explicit formulation \eqref{SDP-RLT} is provided in Subection~\ref{subsec-SDPRLT}.

\bigskip


\noindent\tb{DNN relaxation.}
To derive the DNN relaxation of \eqref{MBQP}, we first convert all the inequality constraints in \eqref{MBQP} into equality constraints by introducing nonnegative slack variables. The resulting problem is
\begin{equation*}
\min\left\{x^{\top} Q x+2 c^{\top}x : \ Ax=b,\ Gx+s= d,\ x_i \in\{0,1\},\ \forall i \in B,\ x\in \R^{n}_+,\ s\in  \R^{l}_+\right\},
\end{equation*}
which can be expressed in the following compact form:
\begin{equation}\label{prob-MBQP-2}
v^{P_2}:=\min\left\{x'^{\top} Q' x'+2 c'^{\top}x' : \ A'x'=b',\ x'_i \in\{0,1\},\ \forall i \in B,\ x'\in \R^{n+l}_+\right\},
\tag{$P_2$}
\end{equation}
where
\[
Q':=\begin{bmatrix}
Q & 0_{n\times l}\\ 
0_{l\times n} & 0_{l\times l}
\end{bmatrix},\
A':=\begin{bmatrix}
A&0_{m\times l}\\
 G&I_l
\end{bmatrix},\ 
c':=\begin{bmatrix}
c\\
0_{l\times 1}
\end{bmatrix},\
b':=\begin{bmatrix}
b\\
d
\end{bmatrix},\ 
x':=\begin{bmatrix}
x\\
s
\end{bmatrix}.
\]
Then the DNN relaxation of \eqref{prob-MBQP-2} is given by  
\begin{equation}\label{DNN}
v^{\text{DNN}}:=\min\left\{
\<C',Y'\>  :\  \
Y'\in \F_D\cap \mathbb{S}_{+}^{n+l+1} \cap\mathbb{N}^{n+l+1}
\right\},\tag{DNN}
\end{equation}
where $C':=[0,(c')^\top; c', Q']$, and $\F_D$ is defined as
\begin{equation*}
\begin{aligned}
\F_D &:= \left\{\begin{bmatrix}
    z'&(x')^\top\\
    x'&X'
\end{bmatrix}\in  \mathbb{S}^{n+l+1} :\ 
A' x'  = b',\
A' X' = b'(x')^\top,\ \operatorname{diag}_B(X')=x'_B,\ z'=1
\right\}.
\end{aligned}
\end{equation*}
There are several equivalent reformulations of $\F_D$, but we adopt this particular form because it preserves the sparsity structure of the constraint matrices and has the smallest duality gap among other DNN reformulations, see \cite{RiNNAL,bomze2017fresh} for more detailed explanations. \eqref{DNN} can be also viewed as the SDP-RLT relaxation of \eqref{prob-MBQP-2}.


\subsection{Tightness comparison}
\label{Tightness comparison}

In this subsection, we compare the tightness of the lower bounds provided by the three different relaxations: \eqref{SHOR}, \eqref{SDP-RLT}, and \eqref{DNN}. Specifically, we prove in Theorem~\ref{thm-1} that \eqref{SDP-RLT} and \eqref{DNN} provide the same bound, which is tighter than that of \eqref{SHOR}. To prove this result, we first denote
\begin{equation*}
\begin{aligned}
{\mathrm{F}_R}&:=\F\cap\I_R\cap \mathbb{S}_{+}^{n+1} \cap\mathbb{N}^{n+1}&&\subseteq\S^{n+1},\\
{\mathrm{F}_D}&:=\F_D\cap \mathbb{S}_{+}^{n+l+1} \cap\mathbb{N}^{n+l+1}&&\subseteq\S^{n+l+1},
\end{aligned}
\end{equation*}
which are the feasible regions of \eqref{SDP-RLT} and \eqref{DNN}, respectively. Next, define the linear map $\Phi:\S^{n+1}\rightarrow\S^{n+l+1}$ such that for any $Y=\begin{bmatrix}z& x^\top \\ x & X\end{bmatrix}\in\S^{n+1}$, it holds that
     {\small
     \begin{equation}
     \label{phi}
     \begin{aligned}
     \Phi\left(Y\right):=\begin{bmatrix}
             1&0_{1\times n}\\
             0_{n\times 1}&I_n\\
             d&-G
         \end{bmatrix}Y\begin{bmatrix}
             1&0_{1\times n}\\
             0_{n\times 1}&I_n\\
             d&-G
         \end{bmatrix}^\top
         =\begin{bmatrix}
             z&x^\top&(zd-Gx)^\top\\
             x&X&(dx^\top-GX)^\top\\
             zd-Gx&dx^\top-GX&\G(Y)
         \end{bmatrix},
     \end{aligned}
     \end{equation}
     }\normalsize
where the mapping $\G$ is defined in \eqref{def-G}.
The following lemma characterizes the one-to-one correspondence between ${\mathrm{F}_R}$ and ${\mathrm{F}_D}$.
\begin{lemma}\label{lemma-F1-F2}
The mapping $\Phi$ is bijective from ${\mathrm{F}_R}$ to ${\mathrm{F}_D}$.
\end{lemma}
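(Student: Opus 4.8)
The plan is to establish the bijectivity of $\Phi$ between ${\mathrm{F}_R}$ and ${\mathrm{F}_D}$ by exhibiting an explicit two-sided inverse. First I would verify that $\Phi$ maps ${\mathrm{F}_R}$ into ${\mathrm{F}_D}$: given $Y\in{\mathrm{F}_R}$, the matrix $\Phi(Y)$ is positive semidefinite because it is of the form $LYL^\top$ with $L=[1,0;0,I_n;d,-G]$ and $Y\succeq 0$; its nonnegativity follows from the explicit block form in \eqref{phi}, since the blocks are precisely $z,x,X,zd-Gx,dx^\top-GX,\G(Y)$, all of which are nonnegative by the constraints defining $\F$, $\I_R$ and $\mathbb{N}^{n+1}$. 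The equality constraints defining $\F_D$ must then be checked by reading off $x':=(x;zd-Gx)=(x;d-Gx)$ (using $z=1$) and $X':=\begin{bmatrix}X&(dx^\top-GX)^\top\\dx^\top-GX&\G(Y)\end{bmatrix}$, and confirming $A'x'=b'$, $A'X'=b'(x')^\top$, $\operatorname{diag}_B(X')=x'_B$ hold; the first two reduce to $Ax=b$, $Gx+(d-Gx)=d$, $AX=bx^\top$, and $GX+(dx^\top-GX)^\top$ matching $dx^\top$, which are exactly the RLT-derived equalities in $\F$, while the diagonal condition on the $B$-block is inherited directly since $B\subseteq[n]$ and the top-left block of $X'$ is $X$.

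Next I would construct the inverse. The natural candidate is $\Psi:\S^{n+l+1}\to\S^{n+1}$ that takes $Y'$ with block structure indexed by $\{0\}\cup[n]\cup([n+l]\setminus[n])$ and simply extracts the principal submatrix on rows/columns $\{0\}\cup[n]$, i.e. $\Psi(Y')=\begin{bmatrix}z'&(x'_{[n]})^\top\\x'_{[n]}&X'_{[n],[n]}\end{bmatrix}$. I would show $\Psi\circ\Phi=\mathrm{id}$ on ${\mathrm{F}_R}$, which is immediate from \eqref{phi} since the $\{0\}\cup[n]$ principal block of $\Phi(Y)$ is exactly $Y$. The more delicate direction is $\Phi\circ\Psi=\mathrm{id}$ on ${\mathrm{F}_D}$: I must argue that for $Y'\in{\mathrm{F}_D}$, the ``slack'' blocks of $Y'$ (those involving indices in $[n+l]\setminus[n]$) are uniquely determined by the $\{0\}\cup[n]$ block via the formulas in \eqref{phi}. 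This is where the equality constraints $A'x'=b'$ and $A'X'=b'(x')^\top$ do the work: writing $x'=(x;s)$ and using the block form of $A'$, the equation $Gx+s=d$ forces $s=d-Gx=zd-Gx$ (again $z'=z=1$), and the block rows of $A'X'=b'(x')^\top$ corresponding to the $G$-$I_l$ part force the $(n{+}1{:}n{+}l)$-rows of $X'$ to equal $dx^\top-GX$ in the first $n$ columns and $\G(Y)$ in the slack-slack block. So $Y'=\Phi(\Psi(Y'))$, and in particular $\Psi$ maps ${\mathrm{F}_D}$ into ${\mathrm{F}_R}$ (the PSD-ness and nonnegativity of $\Psi(Y')$ being inherited as a principal submatrix, and the $\I_R$ and $\F$ constraints on $\Psi(Y')$ following by reversing the reading-off above).

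The main obstacle I anticipate is the bookkeeping in showing the slack-slack block $X'_{\text{slack},\text{slack}}$ of a feasible $Y'$ is pinned down to exactly $\G(Y)$ — the constraint $A'X'=b'(x')^\top$ gives the slack rows of $X'$ in terms of the non-slack data, but recovering the full symmetric slack-slack block requires also invoking symmetry of $X'$ together with these row identities, i.e. combining the $(i,j)$ entry computed two ways. I would handle this by writing $A'X'=b'(x')^\top$ in full block form, extracting the equation $GX'_{[n],\text{slack}}+X'_{\text{slack},\text{slack}}=d(x'_{\text{slack}})^\top=ds^\top=d(d-Gx)^\top$, substituting the already-derived $X'_{[n],\text{slack}}=(dx^\top-GX)^\top$ and $s=d-Gx$, and simplifying to $X'_{\text{slack},\text{slack}}=dd^\top-d x^\top G^\top-G dx^\top+GXG^\top=\G(Y)$ using $z=1$; symmetry is then an automatic consistency check rather than an extra input. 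Once both composition identities hold, bijectivity between the two sets follows, completing the proof of Lemma~\ref{lemma-F1-F2}.
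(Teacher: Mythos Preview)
Your proof is correct and follows essentially the same approach as the paper: the paper also proves that $\Phi$ maps $\mathrm{F}_R$ into $\mathrm{F}_D$, then establishes surjectivity by showing that the principal $\{0\}\cup[n]$-submatrix of any $Y'\in\mathrm{F}_D$ lies in $\mathrm{F}_R$ and is mapped by $\Phi$ back to $Y'$, and finally derives injectivity from the observation that $Y$ is a principal submatrix of $\Phi(Y)$. Your framing in terms of an explicit two-sided inverse $\Psi$ is simply a cleaner packaging of the same three ingredients.
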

\begin{proof}
It is sufficient to prove that:
\begin{enumerate}
\item[(1)] For any $Y:=\begin{bmatrix}
z&x^\top\\
x&X
\end{bmatrix}\in{\mathrm{F}_R}$, it holds that $\Phi(Y)\in {\mathrm{F}_D}$.
\item[(2)] $\Phi$ is surjective.
\item[(3)] $\Phi$ is injective.
\end{enumerate}
Proof of (1): assume that $Y\in{\mathrm{F}_R}=\F\cap\I_R\cap \mathbb{S}_{+}^{n+1} \cap\mathbb{N}^{n+1}$. Since $z=1$, we have that  
\[
\Phi(Y):=\begin{bmatrix}
1&x^\top&s^\top\\
x&X&Z^\top\\
s&Z&W
\end{bmatrix}=\begin{bmatrix}
             1&x^\top&(d-Gx)^\top\\
             x&X&(dx^\top-GX)^\top\\
             d-Gx&dx^\top-GX&GXG^\top -Gxd^\top -dx^\top G^\top +dd^\top
         \end{bmatrix}.
\]
First, by the definition of $\Phi(Y)$ in \eqref{phi}, $Y\succeq 0$ implies that $\Phi(Y)\succeq 0$. Next, the relation $Y\in\I_R$ implies that $s\geq 0,\ Z\geq 0,\ W\geq 0$. Also, the relation $Y\in \mathbb{N}^{n+1}$ implies that $x\geq 0,\ X\geq 0$. Thus, we have $\Phi(Y)\in \bN^{n+l+1}$. Finally, denote $x':=[x;s]$. 
Then
\begin{equation*}
\begin{aligned}
A'x'
=\begin{bmatrix}
A&0_{m\times l}\\
 G&I_l
\end{bmatrix}\begin{bmatrix}
x\\s
\end{bmatrix}
=\begin{bmatrix}
Ax\\
Gx+s
\end{bmatrix}
=\begin{bmatrix}
b\\
d
\end{bmatrix}
=b', 
\end{aligned}
\end{equation*}
where the third equality follows from the definition of $s$. Similarly,
\[
A'X'
=\begin{bmatrix}
A&0_{m\times l}\\
 G&I_l
\end{bmatrix}\begin{bmatrix}
X&Z^\top\\
Z&W
\end{bmatrix}
=\begin{bmatrix}
AX&AZ^\top\\
GX+Z&GZ^\top+W
\end{bmatrix}
=\begin{bmatrix}
bx^\top&bs^\top\\
dx^\top&ds^\top
\end{bmatrix}
=b'(x')^\top,
\]
where the third equality follows from the definitions of $s$, $Z$ and $W$. Furthermore, 
\[
\operatorname{diag}_B(X')=\operatorname{diag}_B(X)=x_B=x'_B.
\] 
Thus, we have $\Phi(Y)\in \F_D$. According to the relations above, we proved that $\Phi(Y)\in{\mathrm{F}_D}$.
\smallskip

\noindent Proof of (2): we prove that for any $Y'\in{\mathrm{F}_D}$, there exists some $Y\in{\mathrm{F}_R}$ such that $\Phi(Y)=Y'$. Denote
\[
Y':=\begin{bmatrix}
1&x^\top&s^\top\\
x&X&Z^\top\\
s&Z&W
\end{bmatrix}\in {\mathrm{F}_D},
\] 
then it is sufficient to prove that the submatrix $Y_0:=[1, x^\top; x, X] \in{\mathrm{F}_R}$ and it satisfies $\Phi(Y_0)=Y'$. By the relation $Y'\in\F_D$, we have 
\begin{equation}\label{eq-Y'-F'}
\begin{aligned}
\begin{bmatrix}
Ax\\
Gx+s
\end{bmatrix}
=\begin{bmatrix}
b\\
d
\end{bmatrix}
,\quad \begin{bmatrix}
AX&AZ^\top\\
GX+Z&GZ^\top+W
\end{bmatrix}
=\begin{bmatrix}
bx^\top&bs^\top\\
dx^\top&ds^\top
\end{bmatrix},
\end{aligned}
\end{equation}
which implies that
\[
\Phi(Y_0)=\begin{bmatrix}
             1&x^\top&(d-Gx)^\top\\
             x&X&(dx^\top-GX)^\top\\
             d-Gx&dx^\top-GX&GXG^\top -Gxd^\top -dx^\top G^\top +dd^\top
         \end{bmatrix}
=\begin{bmatrix}
1&x^\top&s^\top\\
x&X&Z^\top\\
s&Z&W
\end{bmatrix}=Y'.
\]
Thus, we only need to show that $Y_0\in{\mathrm{F}_R}$.
First, the relation $Y_0\in\S^{n+1}_+\cap\mathbb{N}^{n+1}$ holds because $Y'\in\S^{n+l+1}_+\cap\mathbb{N}^{n+l+1}$. Second, denote $x':=[x;s]$, the relation $Y_0\in\F$ follows from the first $m$ rows of \eqref{eq-Y'-F'} and 
\[
\operatorname{diag}_B(X)=\operatorname{diag}_B(X')=x'_B=x_B.
\]
Finally, the relation $Y_0\in\I_R$ is equivalent to $s\geq0, Z\geq 0, W\geq 0$, which holds because $Y'\geq 0$. Thus, we have $Y_0\in{\mathrm{F}_R}$.
\smallskip

\noindent Proof of (3): we prove that if $\Phi(Y_1)=\Phi(Y_2)$ for any $Y_1,Y_2\in{\mathrm{F}_R}$, then $Y_1=Y_2$.
This can be directly proven by noting that $Y_1$ and $Y_2$ are the submatrices of $\Phi(Y_1)$ and $\Phi(Y_2)$.
\end{proof}
With Lemma \ref{lemma-F1-F2}, we can prove the equivalence between \eqref{SDP-RLT}, and \eqref{DNN}.
\begin{theorem}\label{thm-1} The optimal values of different reformulations and relaxations of \eqref{MBQP} satisfy
    \begin{equation*}
    v^{\text{SHOR}} \leq 
    v^{\text{DNN}} =
    v^{\text{SDP-RLT}} \leq 
    v^{P_1} =
    v^{P_2}.
    \end{equation*}
\end{theorem}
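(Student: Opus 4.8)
The plan is to establish the chain of (in)equalities in four separate pieces, reading left to right, and to lean on Lemma~\ref{lemma-F1-F2} for the central equality.

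\textbf{Step 1: $v^{P_1}=v^{P_2}$.} This is the easiest part and I would dispose of it first. The reformulation \eqref{prob-MBQP-2} is obtained from \eqref{MBQP} purely by introducing the slack vector $s=d-Gx\in\R^l_+$; the map $x\mapsto (x,s)=(x,d-Gx)$ is a bijection between the feasible set of \eqref{MBQP} and that of \eqref{prob-MBQP-2}, and since $Q'$ and $c'$ have zero blocks in the $s$-coordinates, the objective values match: $x'^\top Q' x'+2c'^\top x' = x^\top Q x+2c^\top x$. Hence the two optimal values coincide.

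\textbf{Step 2: $v^{\text{DNN}}=v^{\text{SDP-RLT}}$.} This is the heart of the theorem and is where Lemma~\ref{lemma-F1-F2} does the work. Since $\Phi$ is a \emph{linear} bijection from $\mathrm{F}_R$ onto $\mathrm{F}_D$, it suffices to check that $\Phi$ preserves the objective, i.e.\ $\<C',\Phi(Y)\> = \<C,Y\>$ for every $Y\in\mathrm{F}_R$. Writing $Y=[z\,,x^\top;x\,,X]$ with $z=1$, and noting that the top-left $(n+1)\times(n+1)$ block of $\Phi(Y)$ is exactly $Y$ itself while $C'=[0,(c')^\top;c',Q']$ has its only nonzero entries (namely $c$ and $Q$) sitting in that same top-left $(n+1)\times(n+1)$ block, we get $\<C',\Phi(Y)\>=\<C,Y\>$. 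Therefore $\Phi$ maps the feasible region and objective of \eqref{SDP-RLT} isometrically onto those of \eqref{DNN}, so the two minimization problems have the same optimal value. (Since $\mathrm{F}_R$ and $\mathrm{F}_D$ are both nonempty closed sets on which a linear functional is being minimized, one should also note that the optima are attained, or are simultaneously $-\infty$, but boundedness below follows from the next step anyway.)

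\textbf{Step 3: $v^{\text{SHOR}}\le v^{\text{DNN}}$.} I would argue by feasible-region containment after projecting out the slack coordinates. Given any $Y'=[1,x^\top,s^\top;x,X,Z^\top;s,Z,W]\in\mathrm{F}_D$, the submatrix $Y_0=[1,x^\top;x,X]$ lies in $\F_S\cap\I_S\cap\S^{n+1}_+$: positive semidefiniteness and nonnegativity of $Y_0$ are inherited from $Y'$ (any principal submatrix of a PSD/nonnegative matrix has the same property); the equalities $Ax=b$ and $\operatorname{diag}_B(X)=x_B$ come from the $\F_D$ constraints restricted to the first block; and $zd-Gx=d-Gx=s\ge 0$ together with $x\ge 0$ give $Y_0\in\I_S$. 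Since $\<C,Y_0\>=\<C',Y'\>$ (same zero-block argument as in Step 2), every DNN-feasible point yields a SHOR-feasible point of equal objective value, whence $v^{\text{SHOR}}\le v^{\text{DNN}}$. (This is essentially the content of part (2) of the proof of Lemma~\ref{lemma-F1-F2}, minus the extra RLT constraints, so it can be cited rather than repeated.)

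\textbf{Step 4: $v^{\text{SDP-RLT}}\le v^{P_1}$.} This is the standard "relaxation is a lower bound" argument. For any $x$ feasible for \eqref{MBQP}, set $z=1$, $X=xx^\top$ and $Y=[1,x^\top;x,xx^\top]=[1;x][1;x]^\top$. Then $Y\succeq 0$ trivially; $x\ge 0$ and $X=xx^\top\ge 0$ give $Y\in\mathbb{N}^{n+1}$; $Ax=b$ gives $AX=Axx^\top=bx^\top$; $x_i\in\{0,1\}$ for $i\in B$ gives $X_{ii}=x_i^2=x_i$; and multiplying the valid inequalities $d-Gx\ge 0$, $x\ge 0$ pairwise reproduces $\G(Y)=(d-Gx)(d-Gx)^\top\ge 0$, $dx^\top-GX=(d-Gx)x^\top\ge 0$, $d-Gx\ge 0$, so $Y\in\F\cap\I_R$. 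Thus $Y$ is feasible for \eqref{SDP-RLT} with $\<C,Y\>=x^\top Q x+2c^\top x$, giving the inequality. Chaining Steps 1--4 yields the asserted string.

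The main obstacle is conceptually Step~2, but thanks to Lemma~\ref{lemma-F1-F2} the only thing left to verify is the objective-preservation identity $\<C',\Phi(Y)\>=\<C,Y\>$, which reduces to the observation that both $C'$ and the "new" block coordinates of $\Phi(Y)$ vanish outside the shared top-left $(n+1)\times(n+1)$ block — a one-line check. So in practice the write-up is short; the care needed is mostly in bookkeeping the block structure and in remembering to address attainment/boundedness so that the equalities between optimal values are not vacuous.
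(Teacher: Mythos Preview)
Your proof is correct and follows essentially the same approach as the paper: the central equality $v^{\text{DNN}}=v^{\text{SDP-RLT}}$ is obtained exactly as the paper does, via Lemma~\ref{lemma-F1-F2} together with the objective-preservation identity $\langle C',\Phi(Y)\rangle=\langle C,Y\rangle$. The only cosmetic difference is in the two inequality steps: the paper argues $v^{\text{SHOR}}\le v^{\text{SDP-RLT}}$ (direct feasible-set containment, since $\F\cap\I_R\cap\mathbb{N}^{n+1}\subseteq\F_S\cap\I_S$) and $v^{\text{DNN}}\le v^{P_2}$ (DNN is a relaxation of $P_2$), whereas you compare SHOR with DNN and SDP-RLT with $P_1$ instead---equivalent once Steps~1 and~2 are in place, though your Step~4 requires explicitly checking all RLT constraints while the paper's route avoids that bookkeeping.
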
 
\begin{proof}
Since \eqref{MBQP} and \eqref{prob-MBQP-2} differ only by the slack variables, it follows that $v^{P_1}=v^{P_2}$. Additionally, since \eqref{SHOR} shares the same objective function as \eqref{SDP-RLT} but has a larger feasible region, we have $v^{\text{SHOR}}\leq v^{\text{SDP-RLT}}$. Since \eqref{DNN} is a relaxation of \eqref{prob-MBQP-2}, we have $v^{\text{DNN}}\leq v^{P_2}$. Thus, we only need to prove that $v^{\text{SDP-RLT}} = v^{\text{DNN}}$. By Lemma \ref{lemma-F1-F2}, there exists a bijective mapping $\Phi$ from the feasible set ${\mathrm{F}_R}$ of \eqref{SDP-RLT} to the feasible set ${\mathrm{F}_D}$ of \eqref{DNN}. Furthermore, for any pair $Y\in{\mathrm{F}_R}$ and $Y'\in{\mathrm{F}_D}$ such that $\Phi(Y)=Y'$, the objective function values of $\eqref{DNN}$ and $\eqref{SDP-RLT}$ are the same, i.e., $\<C',Y'\>=\<C,Y\>$. Thus, we have $v^{\text{SDP-RLT}} =v^{\text{DNN}}$.
\end{proof}
\begin{remark}[Applications]
The equivalence outlined in Theorem \ref{thm-1} between \eqref{SDP-RLT} and \eqref{DNN} generalizes several established results in the literature. For example, \cite{bao2011semidefinite,sherali2013reformulation} establish this equivalence for \eqref{MBQP} restricted to box constraints. \cite{anstreicher2012convex,bomze2019notoriously} mentioned similar equivalence results under the mapping $\Phi$, but their DNN relaxation does not incorporate slack variables, making it different from \eqref{DNN}. Theorem \ref{thm-1} also serves as an efficient tool to directly verify the equivalence between different relaxations. For example, we can directly confirm the equivalence between the DNN relaxation of a sparse standard QP and its reduced-dimensional form demonstrated in \cite[Section 3]{bomze2024tighter}. Similarly, we can confirm the equivalence between the DNN relaxation of a concave QP and its reduced-dimensional form demonstrated in \cite[subsection 2.2]{qu2023globally}.
\end{remark}
\begin{remark}[Tightness]
    Numerical examples, presented in Subsection \ref{subsec-tightness-numerical} and discussed in \cite[Section 6]{kim2022doubly}, demonstrate that the inequality $v^{\text{SHOR}} \leq v^{\text{SDP-RLT}}$ may indeed be strict. Additionally, Subsection \ref{subsec-tightness-numerical} further compares the relative gap of these different relaxations.
\end{remark}
\begin{remark}[Constraints]
Theorem \ref{thm-1} indicates that \eqref{SDP-RLT} and \eqref{DNN} are theoretically equivalent. However, these two relaxations differ in variable dimensions, as well as the type and number of constraints, which will affect the numerical performance of algorithms used to solve these relaxations. As shown in Table \ref{tab-comparison}, although \eqref{SDP-RLT} has a smaller variable dimension than \eqref{DNN}, it contains more inequality constraints, especially when the number of inequalities of \eqref{MBQP} is large. The numerical performance comparison of RiNNAL+ and SDPNAL+ for different relaxation formulations is further studied in subsection \ref{subsec-pp}.
\end{remark}	

\begin{center}
\begin{table}[h!]
\begin{center}
    \def\arraystretch{1.5}
    \setlength{\tabcolsep}{5pt} 
    \begin{tabular}{|c|c|c|c|} 
     \hline
    \multicolumn{1}{|c|}{Problem} & \# conic constraints  & \# equality constraints & \# inequality constraints   \\ 
     \hline
     \eqref{SHOR}  & $\S_+^{n+1}$   &   $m+p+1$ & $l+n$ \\ 
     \hline
     \eqref{SDP-RLT}  & $\S_+^{n+1}\cap\bN^{n+1}$   &   $m(n+1)+p+1$ & ${l(2n+l+1)}/{2}$ \\ 
     \hline
     \eqref{DNN} & $\S_+^{n+l+1}\cap\bN^{n+l+1}$  &  $(m+l)(n+l+1)+p+1$ &  $0$ \\ 
     \hline    
      \multicolumn{4}{|c|}{$n$: variable dimension\quad $p$: binary variables\quad $m$: equalities\quad $l$: inequalities}\\
     \hline
     \end{tabular}
    \caption{Comparison of constraints for different formulations.}
    \label{tab-comparison}
\end{center}
\end{table}
\end{center}


\subsection{Examples}\label{subsec-application}

In this subsection, we highlight two significant classes of \eqref{MBQP}
derived from the strengthened equality-constrained MBQP 
and sparse QP with $l_0$-norm constraint. 
\begin{example}[Strengthened equality-constrained MBQP]\label{example-SMBQP}
Consider the following equality-constrained MBQP without inequality constraints:
\begin{equation}\label{prob-MBQP-eq}
\min\left\{x^{\top} Q x+2 c^{\top}x : \ Ax=b,\ x_B\in\{0,1\}^{p},\ x\in \R^{n}_+\right\}.
\tag{MBQP-E}
\end{equation}
The DNN and SDP-RLT relaxations of \eqref{prob-MBQP-eq} are the same and have been extensively analyzed in prior works \cite{RiNNAL,bomze2017fresh,burer2010optimizing}. However, they may still lack sufficient tightness. To further strengthen these relaxations, we can introduce redundant constraints based on the binary condition $x_B\in\{0,1\}^p$ \cite{padberg1989boolean,locatelli2024fix}. Specifically, we add the redundant bound constraint $x_B\leq e$ to \eqref{prob-MBQP-eq}, leading to the following equivalent problem:
\begin{equation}\label{SMBQP}
v^{P_1}:=\min\left\{x^{\top} Q x+2 c^{\top}x : \ Ax=b,\ x_B\leq e,\ x_B\in\{0,1\}^{p},\ x\in \R^{n}_+\right\}.\tag{SMBQP-E}
\end{equation}
Although \eqref{prob-MBQP-eq} and \eqref{SMBQP} are equivalent, the SDP-RLT relaxation of the latter is expected to be tighter due to the additional bound constraint $ x_B \leq e $. This is confirmed in subsection~\ref{subsec-tightness-numerical}, where we compare the SDP-RLT lower bounds of both formulations and demonstrate that incorporating this redundant constraint significantly strengthens the SDP-RLT relaxations.
To obtain the DNN relaxation of \eqref{SMBQP}, we introduce a slack variable $s$ and reformulate \eqref{SMBQP} as follows:
\begin{equation}\label{SMBQP-slack}
v^{P_2}:=\min\left\{x^{\top} Q x+2 c^{\top}x : \ Ax=b,\ x_B+s= e,\ x_B\in\{0,1\}^{p},\ x\in \R^{n}_+,\ s\in\R_+^{p}\right\},
\end{equation}
which aligns with the structure of \eqref{prob-MBQP-2}, enabling us to obtain the corresponding DNN relaxation. Furthermore, we can replace the binary constraints $x_B\in\{0,1\}^{p}$ in \eqref{SMBQP-slack} by the complementarity constraints $x_B\circ s=0$, leading to another formulation as follows:
\begin{equation}\label{SMBQP-comp}
v^{P_3}:=\min\left\{x^{\top} Q x+2 c^{\top}x : \ Ax=b,\ x_B+s= e,\ x_B\circ s=0,\ x\in \R^{n}_+,\ s\in\R_+^{p}\right\}.
\tag{$P_3$}
\end{equation}
This formulation is in the form of QCQP, differing from the structure of \eqref{MBQP} due to the additional complementarity constraints $x_B \circ s = 0$. However, as discussed in Remark \ref{remark-QCQP}, we can still derive the SDP-RLT relaxation as follows:
\begin{equation}\label{COMP}
v^{\text{COMP}}:=\min\left\{
\<C',Y'\>  :\  \
Y'\in \F_C\cap\E_C\cap \mathbb{S}_{+}^{n+p+1} \cap\mathbb{N}^{n+p+1}
\right\},\tag{COMP}
\end{equation}
where the sets $\F_C$ and $\I_C$ are defined as 
\begin{equation*}
\begin{aligned}
\F_C &:= \left\{
\begin{bmatrix}
    z'&(x')^\top\\
    x'&X'
\end{bmatrix}\in  \mathbb{S}^{n+p+1} :\ 
A' x'  = b',\
A' X' = b'(x')^\top,\ z'=1
\right\}\\
\E_C &:= \left\{\begin{bmatrix}
    z'&(x')^\top\\
    x'&X'
\end{bmatrix}\in\mathbb{S}^{n+p+1}  :\ X'_{ij}=0\ \text{for}\ i\in B,\ j\in \{n+1,\dots,n+p\} \right\}
.
\end{aligned}
\end{equation*}
Denote the optimal values of the Shor, SDP-RLT and DNN relaxations of \eqref{SMBQP} as $v^{SHOR}$, $v^{SDP-RLT}$ and $v^{DNN}$, respectively. The following result shows that the three different relaxations are all equivalent.
\begin{theorem}\label{thm-tightness-SMBQP}
The optimal values of different reformulations and relaxations of \eqref{SMBQP} satisfy
    \begin{equation*}
    v^{\text{SHOR}} \leq 
    v^{\text{SDP-RLT}} =
    v^{\text{DNN}} = 
    v^{\text{COMP}} \leq 
    v^{P_1} =
    v^{P_2} = 
    v^{P_3}.
    \end{equation*}
\end{theorem}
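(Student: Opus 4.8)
The plan is to establish the chain of (in)equalities by combining Theorem~\ref{thm-1}, applied to the two relevant reformulations of \eqref{SMBQP}, with a direct bijection argument for the new relaxation \eqref{COMP}. First I would observe that \eqref{prob-MBQP-eq}, the strengthened problem \eqref{SMBQP}, its slack-variable form \eqref{SMBQP-slack}, and the complementarity form \eqref{SMBQP-comp} all describe the same feasible set of points, merely with auxiliary variables added; hence $v^{P_1}=v^{P_2}=v^{P_3}$ trivially, and the rightmost equalities of the claim follow. Next, since \eqref{SMBQP} is an instance of \eqref{MBQP} (with the inequality constraints consisting of $x_B\le e$, plus $x\ge 0$ which is already present), Theorem~\ref{thm-1} applied to \eqref{SMBQP} immediately yields $v^{\text{SHOR}}\le v^{\text{SDP-RLT}}=v^{\text{DNN}}\le v^{P_1}$. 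So the only genuinely new content is the equality $v^{\text{DNN}}=v^{\text{COMP}}$.

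To prove $v^{\text{DNN}}=v^{\text{COMP}}$, I would mimic the proof of Lemma~\ref{lemma-F1-F2}. Here the situation is simpler because both relaxations live in $\S^{n+p+1}$ and involve the \emph{same} lifted variable $Y'=[z',(x')^\top;x',X']$ with $x'=[x;s]\in\R^{n+p}$ — there is no extra block of slack-times-slack coordinates to reconstruct. The key point is that the binary constraint $x_i\in\{0,1\}$ for $i\in B$, combined with the slack relation $x_B+s=e$, is captured at the lifted level in two equivalent ways: the DNN relaxation of \eqref{SMBQP-slack} encodes it via $\operatorname{diag}_B(X')=x'_B$ together with the RLT rows coming from the equality $A'x'=b'$ (which includes $x_B+s=e$), whereas \eqref{COMP} encodes it via the complementarity lifting $X'_{ij}=0$ for $i\in B$, $j\in\{n+1,\dots,n+p\}$, again together with the same RLT rows. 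I would show the two feasible sets $\F_D\cap\S^{n+p+1}_+\cap\bN^{n+p+1}$ and $\F_C\cap\E_C\cap\S^{n+p+1}_+\cap\bN^{n+p+1}$ coincide: the containment of the RLT equality rows $A'X'=b'(x')^\top$ forces, for $i\in B$, the identity $X'_{i,n+j}+X'_{i\,(\text{corresponding }s\text{ block})}=\,$ (row of $x'$), i.e. $(X')_{B,\,\cdot}$ restricted to the $x_B$ and $s$ columns satisfies $X'_{B,x_B}+X'_{B,s}= x'_B e^\top$ type relations; reading off diagonal/row entries one sees that $\operatorname{diag}_B(X')=x'_B$ is equivalent, modulo the already-present RLT rows, to $X'_{ij}=0$ for $i\in B$, $j$ in the $s$-block. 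Once the feasible sets are shown equal and the objective matrices $C'$ agree (both are $[0,(c')^\top;c',Q']$ on the appropriate dimension), the equality of optimal values is immediate.

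The main obstacle, and the step deserving the most care, is precisely this verification that $\F_D\cap(\cdots)=\F_C\cap\E_C\cap(\cdots)$: one must check that the diagonal condition $\operatorname{diag}_B(X')=x'_B$ and the off-diagonal condition $X'_{ij}=0$ are genuinely interchangeable in the presence of the RLT equalities $A'X'=b'(x')^\top$ and the PSD/nonnegativity cone, using the specific block structure $A'=[A,0;\,I_B\text{-selector},I_p]$ induced by $x_B+s=e$. Concretely, for $i\in B$ the relevant RLT row gives $X'_{i,i}+X'_{i,n+k(i)}=x'_i$ where $n+k(i)$ is the slack index paired with $i$; combined with $X'\succeq0$, $X'\ge 0$, and $x'_i\in[0,1]$ one deduces $X'_{i,i}=x'_i \iff X'_{i,n+k(i)}=0$, and one must further argue the remaining off-diagonal entries $X'_{ij}=0$ for $j$ a non-paired slack index, which again follows from the RLT rows. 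After this bookkeeping, assembling the full chain is routine, and I would close by noting that the inequality $v^{\text{SHOR}}\le v^{\text{SDP-RLT}}$ can be strict, as recorded in the Tightness remark.
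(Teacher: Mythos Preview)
Your overall structure matches the paper: use the obvious equivalence of \eqref{SMBQP}, \eqref{SMBQP-slack}, \eqref{SMBQP-comp} for $v^{P_1}=v^{P_2}=v^{P_3}$, invoke Theorem~\ref{thm-1} for $v^{\text{SHOR}}\le v^{\text{SDP-RLT}}=v^{\text{DNN}}\le v^{P_1}$, and then establish $v^{\text{DNN}}=v^{\text{COMP}}$ separately. The difference lies in this last step: the paper simply cites an external reference \cite{ito2018equivalences} for $v^{\text{DNN}}=v^{\text{COMP}}$, whereas you give a direct argument that the two feasible sets in $\S^{n+p+1}$ coincide. Your key observation---that the RLT row coming from $x_{i_k}+s_k=1$ applied to column $i_k$ yields $X'_{i_k,i_k}+X'_{i_k,\,n+k}=x'_{i_k}$, so within $\F_C$ the diagonal condition $X'_{i_k,i_k}=x'_{i_k}$ is equivalent to the complementarity condition $X'_{i_k,\,n+k}=0$---is correct and makes the proof entirely self-contained, which the paper's citation does not.

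One clarification: despite the paper's loose notation for $\E_C$, the constraint $x_B\circ s=0$ produces exactly $p$ lifted equalities $X'_{i_k,\,n+k}=0$ (one per $k$), as confirmed by the constraint count in Table~\ref{tab-comparison-SMBQP}. Hence there are no ``non-paired'' slack entries to deal with, and your hedging paragraph about those (which you assert ``again follows from the RLT rows'' without giving the argument) is unnecessary. Note also that the equivalence $\operatorname{diag}_B(X')=x'_B \Leftrightarrow \E_C$ is purely algebraic inside $\F_C$; the cones $\S^{n+p+1}_+$ and $\bN^{n+p+1}$ are not needed for this step, so you can drop the appeal to them.
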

\begin{proof}
    Since \eqref{SMBQP}, \eqref{SMBQP-slack} and \eqref{SMBQP-comp} are equivalent, we have $v^{P1}=v^{P2}=v^{P3}$. Also, it is proven in \cite{ito2018equivalences} that $v^{DNN}=v^{COMP}$. Combined with Theorem \ref{thm-1}, we complete the proof.
\end{proof}
\begin{remark}
Table \ref{tab-comparison-SMBQP} presents a comparison of the constraints across the four relaxations of \eqref{SMBQP}. Notably, while \eqref{DNN} and \eqref{COMP} share the same variable dimensions and number of constraints, the nature of these constraints differs, which has a substantial impact on the performance of our proposed algorithm. A detailed performance comparison is provided in subsection \ref{subsec-pp}, demonstrating that \eqref{SDP-RLT} is the preferred choice for our algorithm, as compared to either \eqref{DNN} or \eqref{COMP}. Moreover, \eqref{DNN} is preferred over \eqref{COMP}.
\end{remark}

\begin{center}
\begin{table}[h!]
\begin{center}
    \def\arraystretch{1.5}
    \begin{tabular}{|c|c|c|c|} 
     \hline
    \multicolumn{1}{|c|}{Problem} & \# conic constraints  & \# equality constraints & \# inequality constraints   \\ 
     \hline
     \eqref{SHOR}  & $\S_+^{n+1}$   &   $m+p+1$ & $p+n$ \\ 
     \hline
     \eqref{SDP-RLT}  & $\S_+^{n+1}\cap\bN^{n+1}$   &   $m(n+1)+p+1$ & ${p(2n+p+1)}/{2}$ \\ 
     \hline
     \eqref{DNN} & $\S_+^{n+p+1}\cap\bN^{n+p+1}$  &  $(m+p)(n+p+1)+p+1$ &  $0$ \\ 
     \hline
     \eqref{COMP} & $\S_+^{n+p+1}\cap\bN^{n+p+1}$  &  $(m+p)(n+p+1)+p+1$ &  $0$  \\ 
     \hline    
      \multicolumn{4}{|c|}{$n$: variable dimension\quad $p$: binary variables\quad $m$: equalities}\\
      \hline
     \end{tabular}
    \caption{Comparison of constraints for different relaxations of \eqref{SMBQP}.}
    \label{tab-comparison-SMBQP}
\end{center}
\end{table}
\end{center}

\end{example}

\begin{example}[$l_0$-norm constraint]\label{example-L0} 
Consider the following bounded set with $l_0$-norm constraint:
\begin{equation*}
S_1:=\{x\in\R_+^n:\ x\leq d,\|x\|_0\leq \rho \},
\end{equation*}
where $\rho \in (0, n]$ is a positive integer limiting the maximum number of nonzero elements, and $d \in \mathbb{R}_{++}^n$ provides a positive upper bound for the variable $x$. Without loss of generality, we assume $d = e$, as we can always scale the variable $x$. Constraints of the form $S_1$ are extensively studied in the literature; see, for instance, \cite{atamturk2021sparse,bomze2024tighter}. However, directly addressing optimization problems with the constraints in $S_1$  is typically challenging. An alternative approach is to introduce an upper-bound variable $u$ and consider the following Big-M set:
\begin{equation*}
S_2:=\{(x,u)\in\R_+^n\times \R_+^n:\ u\leq e,\ x\leq u,\ e^\top u=\rho,\ u\in \{0,1\}^n \}.
\end{equation*}
The constraint $u\leq e$ in $S_2$ is redundant, but helps to provide a tighter SDP-RLT relaxation. Besides the Big-M set, we can also use the variable $u$ to formulate the complementarity set: 
\begin{equation}\label{L0-comp}
S_3:=\{(x,u)\in\R_+^n\times \R_+^n:\  u\leq e,\ x\leq e,\ x^\top(u-e)=0,\ e^\top u=\rho,\ u\in \{0,1\}^n \}.
\end{equation}
Define the projection of a set $S$ onto the $x$-component as
\[
\Pi_x(S):=\{x\in \R^n:\ (x,u)\in S \text{ for some } u\in\R^n \}.
\]
Then the following relationship holds:
\[
S_1=\Pi_x(S_2)=\Pi_x(S_3). 
\]
Thus, for any sparse optimization problem with the constraint $x \in S_1$, we can replace $x \in S_1$ with $(x, u) \in S_2$. If the objective function is quadratic, the resulting problem then becomes a special case of \eqref{MBQP}, allowing all the established relaxations and algorithms to be applied directly.
Alternatively, we can substitute $x \in S_1$ with $(x, u) \in S_3$. This reformulation yields a special case of \eqref{MBQP} that includes additional quadratic constraints, specifically $x^\top(u - e) = 0$, which can be transformed into complementarity constraints by changing variables. Based on the discussion in Remark \ref{remark-QCQP}, we can extend the relaxations and algorithms to address the resulting QCQP problem, as demonstrated in subsection \ref{subsec-SStQP}.

\end{example}


\section{Algorithm framework}\label{sec-alg}

In this section, we give a detailed description of the proposed method RiNNAL+ to solve the following general SDP problem:
\begin{equation}\label{prob-DNN-general}
\min\left\{
\<C,Y\>  :\  \
Y\in \F\cap \E\cap\I\cap \mathbb{S}_{+}^{n+1}
\right\},\tag{$P$}
\end{equation}
 where $\F$, $\E$ and $\I$ are defined as
 \begin{equation*}
\begin{aligned}
 \F&:=\left\{Y\in\S^{n+1} :  \mathcal{A}(Y)= {d}  \right\},\,&&\quad\A:\S^{n+1}\rightarrow\R^{d_0},\\
\E&:=\left\{Y\in  \mathbb{S}^{n+1} : \,\B(Y)={g} \right\},\,&&\quad\B\,:\S^{n+1}\rightarrow\R^{d_1},\\
\I &:=\left\{Y\in  \mathbb{S}^{n+1} : \,\C(Y)\,\geq h \right\},&&\quad\C\,\,:\S^{n+1}\rightarrow\R^{d_2},
 \end{aligned}
 \end{equation*}
and $\mathcal{A}$, $\mathcal{B}$ and $\mathcal{C}$ are linear maps.
We focus on the specific $\F$ mentioned in \eqref{SDP-RLT} with
\begin{equation*}
    d_0=m+mn+p+1,\quad
    \mathcal{A}(Y) := \begin{bmatrix}
        AY_{21}\\
        \operatorname{vec}(AY_{22}-bY_{12})\\
        \diag_B(Y_{22})-(Y_{21})_B\\
        Y_{11}
    \end{bmatrix},
    \quad  {d}  := \begin{bmatrix}
        b\\
        0_{mn}\\
        0_{p}\\
        1
    \end{bmatrix}\in\R^{d_0}, 
    \end{equation*}
where $Y_{11},Y_{12},Y_{21},Y_{22}$ are the submatrices of $Y$ defined in \eqref{notation-block}. 
We reuse the notation $d$ in this section with a different meaning from its earlier usage. The distinction should be clear from the context.
Problem \eqref{prob-DNN-general} is general and encompasses the SDP relaxations proposed in this paper, as summarized in Table~\ref{tab-general}.

\begin{center}
\begin{table}[h!]
\begin{center}
    \def\arraystretch{1.5}
    \setlength{\tabcolsep}{5pt} 
    \begin{tabular}{|c|c|c|c|} 
     \hline
    \multicolumn{1}{|c|}{Problem} & $\F$ & $\E$ & $\I$   \\ 
     \hline
     \eqref{SHOR} & $ \F_S $   &   $\S^{n+1}$ & $ \I_S$ \\
     \hline  
     \eqref{SDP-RLT}  & $ \F $   &   $\S^{n+1}$ & $ \I_R \cap \mathbb{N}^{n+1}$ \\ 
     \hline
     \eqref{DNN}  & $\F_D$   &   $\S^{n+l+1}$ & $\mathbb{N}^{n+l+1}$ \\ 
     \hline
     \eqref{COMP} & $\F_C$   &   $\E_C$ & $\I_C\cap\mathbb{N}^{n+l+1}$ \\ 
     \hline    
     \end{tabular}
    \caption{Constraint correspondence to \eqref{prob-DNN-general} for different relaxations.}
    \label{tab-general}
\end{center}
\end{table}
\end{center}

The optimality conditions, also known as the Karush-Kuhn-Tucker (KKT) conditions, for \eqref{prob-DNN-general} are given by:
\begin{equation}\label{KKT}
\begin{aligned}
\text{(primal feasibility)}&\quad \A (Y)=d,\ \B (Y)= g,\ \C (Y)\geq h,\ Y\succeq 0,\\[2pt]
\text{(compementarity)}&\quad
\< \C (Y)-h, \mu \> = 0,\ \<Y,S\>=0,\\[2pt]
\text{(dual feasibility)}&\quad C - \A^*(y) - \B^*(\lambda)-\C^* (\mu) -S= 0,\ S\succeq 0,\ \mu\geq 0,
\end{aligned}
\end{equation}
where $(y,\lambda,\mu,S)\in \R^{d_0}\times \R^{d_1}\times \R^{d_2}\times \mathbb{S}^{n+1} $ are dual variables.
We make the following assumption throughout the paper. 
\begin{assumption}\label{assumption-mild}
    The problem \eqref{prob-DNN-general} admits an optimal solution satisfying the KKT conditions \eqref{KKT}, and its objective function is bounded from below.
\end{assumption}

RiNNAL+ is an augmented Lagrangian method for solving \eqref{prob-DNN-general}, incorporating two phases for solving the ALM subproblem: a low-rank phase utilizing Riemannian optimization and a convex lifting phase employing the projected gradient (PG) method. More specifically,
\begin{itemize}
    \item \tb{(Low-rank phrase)} We leverage possible low-rank property of the 
    optimal solution  to accelerate the computation of the optimal solution of the ALM subproblem
    through a Riemannian gradient descent method.
    
    \item \tb{(Convex lifting phrase)} We run one step of the PG method 
    to automatically adjust the variable's rank and escape from a saddle point, if necessary.
    To solve the PG subproblem, we apply a semismooth Newton-CG (SSN) method 
    together with a warm-start technique (described in
    section \ref{sec-acceleration}) to accelerate
    the computation.
\end{itemize}

In this section, we first introduce the general ALM framework of RiNNAL+ to solve \eqref{prob-DNN-general} in subsection~\ref{subsec-ALM}. We then give a detailed description of the low-rank and convex lifting phases for solving the ALM subproblem in subsection~\ref{subsec-LR-phase} and \ref{subsec-CVX-phase}, respectively. Finally, we elaborate the equivalence between the ALM subproblems of RiNNAL+ for \eqref{SDP-RLT} and \eqref{DNN} in subsection~\ref{subsec-alg-equivalence}.


\subsection{Augmented Lagrangian method}\label{subsec-ALM}

RiNNAL+ is an augmented Lagrangian method for solving \eqref{prob-DNN-general}. We first equivalently express \eqref{prob-DNN-general} in the following form:
\begin{align}
    &\min\left\{
\<C,Y\> +\delta_{\F\cap \mathbb{S}_{+}^{n+1}}(Y) :\  
Y\in \E\cap\I
\right\}\nonumber\\
=\,\,&\min\left\{
\<C,Y\> +\delta_{\F\cap \mathbb{S}_{+}^{n+1}}(Y) :\  
\B(Y)=g,\ C(Y)\geq h
\right\}.\label{prob-DNN-general-reform}
\end{align}
Let $\sigma>0$ be a given penalty parameter. The augmented Lagrange function is defined by
 \begin{equation*}
 L_\sigma(Y;\lambda,\mu):= \<C,Y\>+\frac{\sigma}{2}\|\sigma^{-1}\lambda-(\B(Y)-g) \|^2+\frac{\sigma}{2}\| \Pi_{+}( \sigma^{-1}\mu-(\C(Y)-h)) \|^2.
 \end{equation*}
We can apply the following augmented Lagrangian method to solve \eqref{prob-DNN-general-reform}. Specifically, given the initial penalty parameter $\sigma_0>0$ and dual variables $\lambda^0\in \R^{d_1}$ and $\mu^0\in \R^{d_2}_+$, perform the following steps at the $(k+1)$-th iteration:
\begin{align}
Y^{k+1}&=\arg\min\left\{L_{\sigma_k} (Y;\lambda^{k},\mu^{k}) :\ Y\in \F\cap \S^{n+1}_+\right\},\label{ALM-sub-Y}
\tag{CVX}\\
\lambda^{k+1}&=\lambda^{k}-\sigma_k (\B (Y^{k+1})-g),\\
\mu^{k+1}&=\Pi_+(\mu^{k}-\sigma_k (\C (Y^{k+1})-h)),\label{ALM-mu}
\end{align}
where $\sigma_k \uparrow \sigma_{\infty} \leq+\infty$ are positive penalty parameters. For a comprehensive discussion on the augmented Lagrangian method applied to convex optimization problems and beyond, see \cite{hestenes1969multiplier, powell1969method, rockafellar1976augmented}. 

The main challenge in the ALM lies in solving the convex ALM subproblem \eqref{ALM-sub-Y}. In the following subsections, we introduce 
a two-phase method to address this: the low-rank phase, utilizing Riemannian optimization, and the convex lifting phase, employing the PG method. We then demonstrate how to efficiently solve the subproblem \eqref{ALM-sub-Y} by combining these two phases. The algorithmic framework is listed in Algorithm \ref{alg1}, where $Y^{i}$ is obtained by the factorization described in the next subsection.

\begin{algorithm}
\linespread{1.1}\selectfont
\caption{The RiNNAL+ method}
\label{alg1}
\begin{algorithmic}[1]
\STATE {\bf Parameters:} Given $\sigma_0>0$, integer $r_0>0$, and initial point $R^0\in\M_{r_0}$.
\STATE $k\gets 0$, $i\gets 0$, $\lambda^0=0_{d_1}$, $\mu^0=0_{d_2}$.
\WHILE{\eqref{prob-DNN-general} is not solved to required accuracy}
    \WHILE{\eqref{ALM-sub-Y} is not solved to required accuracy}
        \STATE Obtain $R^{i+1}$ by solving \eqref{ALM-sub-LR-M} using the Riemannian gradient descent method.
        \STATE Obtain $Y^{i+1}$ from $R^{i+1}$ by \eqref{lr-R}.
        \STATE Obtain $Y^{i+1}$ by solving \eqref{ALM-sub-Y} using one step of the projected gradient method.
        \STATE Obtain $R^{i+1}$ from $Y^{i+1}$ by \eqref{lr-R}.
        \STATE $i\gets i+1$.
    \ENDWHILE
    \STATE $Y^{k+1}\gets Y^{i}$. 
    \STATE $\lambda^{k+1}=\lambda^{k}-\sigma_k (\B (Y^{k+1})-g)$.
    \STATE $\mu^{k+1}=\Pi_+(\mu^{k}-\sigma_k (\C (Y^{k+1})-h))$.
    \STATE {Update $\sigma_k$}.
    \STATE $k\gets k+1$,\;\; $i\gets 0$.
\ENDWHILE
\end{algorithmic}
\end{algorithm}

Algorithm~\ref{alg1} is a double-loop method, where the outer loop follows the same structure as the ALM in \cite{RiNNAL}, with convergence analysis conducted under certain accuracy requirements for the subproblems. The primary distinction lies in the approach used to solve the ALM subproblems. In the following subsections, we discuss the hybrid method employed for solving these subproblems in Algorithm~\ref{alg1}. This method was originally introduced in \cite{lee2022escaping} for low-rank matrix optimization and is accompanied by a detailed convergence analysis.

\begin{remark}
    Theoretically, the update scheme in Algorithm~\ref{alg1} guarantees convergence. In practice, however, the single PG step may be skipped if solving \eqref{ALM-sub-LR-M} already provides a sufficiently accurate solution. To ensure that the rank is large enough to guarantee convergence while remaining small to reduce computational cost, a PG step is performed after the low-rank phase at every 5 outer ALM iterations or whenever singularity issues occur, as discussed in the implementation in Section~\ref{sec-experiments}.
\end{remark}


\subsection{Low-rank phase}\label{subsec-LR-phase}

In the low-rank phase, we adopt the method proposed in \cite{RiNNAL} to solve \eqref{ALM-sub-Y}. Below, we briefly summarize this approach. Let $\sigma\in\R_+$, $\lambda\in \R^{d_1}$ and $\mu\in \R^{d_2}_+$ be fixed, and assume that the subproblem (\ref{ALM-sub-Y}) has an optimal solution with rank $r>0$. By the Burer-Monteiro (BM) factorization, any optimal solution $Y\in \mathcal{F}\cap \S_+^{n+1}$ of rank $r$ can be expressed in the following factorized form:
\begin{equation}\label{lr-R}
    Y=\begin{bmatrix}
        1 & x^{\top} \\
x & X
    \end{bmatrix}=\begin{bmatrix}
        e_1^{\top} \\
    R
    \end{bmatrix}
    \begin{bmatrix}
        e_1 & R^{\top}
    \end{bmatrix}=\RR,
\end{equation}
where $R\in \R^{n\times r}$ and $\hR:=[e_1^\top;R]$ with $e_1$ being the first standard unit vector in $\R^r$.
Thus, \eqref{ALM-sub-Y} is equivalent to the following factorized nonconvex problem: 
\begin{equation}
\label{ALM-sub-general-LR}
\min_R\left\{ 
f_r(R):=L_\sigma(\hR \hR^\top;\lambda,\mu)
:\ R\in \N_r \right\},
\end{equation}
where the feasible set $\N_r$ is defined as
\begin{equation*}
    \mathcal{N}_r :=  \left\{R\in\R^{n\times r} :\  ARe_1=b,\ ARR^\top=b(Re_1)^\top,\ \operatorname{diag}_B(RR^{\top})=R_B e_1 \right\}.
\end{equation*}
The set $\mathcal{N}_r$ contains a huge number of $\Omega(mn)$ quadratic equality constraints and is non-smooth everywhere, i.e., the LICQ condition does not hold at every $R\in \mathcal{N}_r$. However, an important observation made in \cite{RiNNAL} is that
\[
\hR\hR^\top \in\F\cap\S^{n+1}_+\quad\Longleftrightarrow\quad R\in \N_r\quad\Longleftrightarrow\quad R\in \M_r,
\]
where $\M_r$ is a much simpler set defined as
\begin{equation*}
\mathcal{M}_r := \left\{R \in \mathbb{R}^{n \times r} :\  AR=be_1^\top,\ \operatorname{diag}_B(RR^{\top})=R_B e_1 \right\}.
\end{equation*}
The set $\M_r$  contains only $mr$ linear constraints and $p$ spherical constraints after a linear transformation.
Therefore, problem \eqref{ALM-sub-Y} can be further simplified as follows:
\begin{equation}
\label{ALM-sub-LR-M}
\min_R\left\{f_r(R) :\ R\in \M_r\right\}.
\tag{LR}
\end{equation}
Different from $\N_r$, the feasible set $\M_r$ is assured to conform to a manifold structure under some conditions \cite{RiNNAL},
thus enabling the application of Riemannian optimization methods for its solution. Define $\widehat{I}:=[0_{1\times n};I_n]$ and 
\begin{align*}
\lambda^{+}(R) :=\lambda-\sigma(\B(\hR \hR^\top)-g),\quad
\mu^{+}(R) :=\Pi_{+}(\mu-\sigma(\C(\hR \hR^\top)-h)).
\end{align*}
Then the objective function value $f_r(R)$ and its gradient can be computed by
\begin{align*}
f_r(R)&=\<C,\hR \hR^\top\>+\frac{1}{2\sigma}\|\lambda^+(R) \|^2+\frac{1}{2\sigma}\| \mu^+(R) \|^2\label{AL-val-general},\\
\nabla f_r(R) &= 2\widehat{I}\left( C-\B^*\left(\lambda^+(R)\right)- \C^*\left(\mu^+(R)\right)\right) \hR.
\end{align*}
Thus, \eqref{ALM-sub-LR-M} can be solved using the Riemannian gradient descent method with Barzilai-Borwein stepsizes. 
The implementation of the Riemannian gradient descent method to solve
\eqref{ALM-sub-LR-M} is similar to that in \cite{RiNNAL}, and we omit the 
details here.

When using this method to solve \eqref{ALM-sub-LR-M}, two important operations are the projection
and retraction \cite{manibook,intromani}.
The projection onto the tangent space of $\M_r$ involves solving an $(mr+p)$ by $(mr+p)$ symmetric positive definite linear system, whose computational cost is in general $\O((mr+p)^3).$ However, it is shown in \cite{RiNNAL} that by utilizing the special structure of $\M_r$, the computational cost of the projection can be reduced to $$\O\left(\min\left\{ p^3+m^2r+mrp, (mr)^2p+(mr)^3\right\}\right),$$ which is much smaller than $\O((mr+p)^3)$ when either $p$ or $mr$ is small.

As for retraction, it is typically more complicated than the projection onto a tangent space because of the nonlinearity and nonconvexity of $\M_r$. However, it is also demonstrated in \cite{RiNNAL} that the non-convex metric projection problem onto $\M_r$ can be equivalently transformed into a convex generalized geometric medium problem. This allows us to adapt the generalized Weiszfeld algorithm to tackle the convex problem with a convergence guarantee. Additional favorable geometric properties of $\mathcal{M}_r$ have been well studied in \cite{tang2024solving,RiNNAL}.

However, as discussed in the introduction, the above approach has certain limitations: (1) the Riemannian gradient descent method may get stuck at saddle points due to the nonconvexity of \eqref{ALM-sub-LR-M}; (2) the optimal rank $r$ for each ALM subproblem is unknown and must be tuned adaptively. Choosing a rank that is too small may cause the low-rank problem \eqref{ALM-sub-LR-M} to be nonequivalent to the ALM subproblem \eqref{ALM-sub-Y}, while a rank that is too large can increase the computational cost substantially. To address these challenges, we introduce the convex lifting phase in the next subsection.


\subsection{Convex lifting phase}\label{subsec-CVX-phase}

In the convex lifting phase, we adopt the strategy proposed in \cite{lee2022escaping} to ensure the global convergence of RiNNAL+ and 
automatically adjust the rank. Let $\sigma\in\R_+$, $\lambda\in \R^{d_1}$ and $\mu\in \R^{d_2}_+$ be fixed and define $\L(Y):=L_{\sigma}(Y,\lambda,\mu)$. Given a feasible starting point $\widehat{Y}$, the convex lifting phase runs one PG step as follows:
\begin{equation*}
    Y=\Pi_{\F\cap \mathbb{S}_{+}^{n+1}}(\widehat{Y}-t\nabla \L(\widehat{Y})),
\end{equation*}
where $t>0$ is an appropriate stepsize. We define $G:=\widehat{Y}-t\nabla \L(\widehat{Y})$ in this section with a different meaning from its earlier usage. Then, the projection subproblem can be written as
\begin{equation}\label{proj-0}
 \min_Y\Big\{ \|Y-G\|^2:\, Y \in \F\cap \mathbb{S}_{+}^{n+1} \Big\}.
 \end{equation}
We first decompose $\F\cap \mathbb{S}_{+}^{n+1}=\K\cap\mathcal{P}$, where 
\begin{equation*}
\begin{aligned}
\mathcal{K}:=&\left\{Y \in \mathbb{S}_{+}^{n+1}:\, \< PP^\top,Y\>=0\right\}
&=&\; 
\left\{\begin{bmatrix}z& x^\top \\ x & X\end{bmatrix}\in \mathbb{S}_{+}^{n+1}:\, Ax=b,\ AX=bx^\top\right\},
\\
\mathcal{P}:=&\left\{Y\in \mathbb{S}^{n+1}:\,\H(Y)=q\right\}
&=&\;
\left\{\begin{bmatrix}z& x^\top \\ x & X\end{bmatrix}\in \mathbb{S}^{n+1}:\,\operatorname{diag}_B(X)=x_B,\ z=1\right\},
\end{aligned}
\end{equation*}
and $P:=\begin{bmatrix}
b^\top\\-A^\top
\end{bmatrix}$, $\H(Y):=\begin{bmatrix}
\operatorname{diag}_B(X)-x_B\\z
\end{bmatrix}$, $q:=\begin{bmatrix}
0_{p}\\1
\end{bmatrix}$. The second equality of $\K$ follows from \cite[Theorem 1]{bomze2017fresh}.
Thus, \eqref{proj-0} is equivalent to
\begin{equation}\label{proj}
 \min_Y\left\{ \|Y-G\|^2:\, Y \in \K\cap\mathcal{P} \right\}.
 \end{equation}
To solve \eqref{proj}, we consider its dual problem:
\begin{equation}\label{proj-dual}
 \min_y\left\{ \frac{1}{2}\|\Pi_{\K}(G+\H^*(y))\|^2-\langle q,y\rangle \right\}.
 \end{equation}
The following lemma provides an efficient way to compute the projection onto $\K$, which generalizes the result of \cite[Lemma 7]{lee2022escaping}.
\begin{lemma}\label{lemma-proj-JGJ}
For any full column rank matrix $P\in\R^{n\times r}$, define
\[
\K:=\left\{G\in\S^{n}_+:\<PP^\top,G\>=0\right\}
\]
and $J:=I-P(P^\top P)^{-1}P^\top$. Then it holds that 
\[
\Pi_{\K}(G)=\Pi_{\S^{n}_+}(JGJ),\quad \forall G\in \S^n.
\]
\end{lemma}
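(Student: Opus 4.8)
The plan is to reduce the constrained projection onto $\K$ to an unconstrained projection onto $\S^n_+$ by exploiting the fact that the linear constraint $\langle PP^\top, G\rangle = 0$, when restricted to the PSD cone, forces the range of any feasible matrix to lie in the orthogonal complement of $\mathrm{range}(P)$. First I would record the key observation that for $G \in \S^n_+$, the condition $\langle PP^\top, G\rangle = 0$ is equivalent to $\Tr(P^\top G P) = 0$, and since $P^\top G P \succeq 0$, this is equivalent to $P^\top G P = 0$, which in turn (again using $G \succeq 0$, e.g.\ by writing $G = LL^\top$ and noting $\|L^\top P\|^2 = 0$) is equivalent to $GP = 0$, i.e.\ $\mathrm{range}(P) \subseteq \ker(G)$. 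Hence $\K = \{G \in \S^n_+ : GP = 0\} = \{G \in \S^n_+ : JGJ = G\}$, where $J = I - P(P^\top P)^{-1}P^\top$ is the orthogonal projector onto $\mathrm{range}(P)^\perp$ (note $J$ is well-defined since $P$ has full column rank, and $J = J^\top = J^2$).

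Next I would verify that $\Pi_{\S^n_+}(JGJ)$ is indeed the desired projection by checking the defining variational inequality (or, equivalently, the KKT/obtuse-angle characterization) of the metric projection onto the closed convex set $\K$. Write $\bar{G} := \Pi_{\S^n_+}(JGJ)$. First, $\bar{G} \in \K$: clearly $\bar{G} \succeq 0$, and since $JGJ$ lies in the subspace $\{M \in \S^n : JMJ = M\}$ (which is invariant under $\Pi_{\S^n_+}$ because this subspace corresponds to symmetric matrices supported on the subspace $\mathrm{range}(P)^\perp$, and $\Pi_{\S^n_+}$ acts by clipping eigenvalues without mixing invariant subspaces), we get $J\bar{G}J = \bar{G}$, hence $\bar{G}P = 0$, so $\bar{G} \in \K$. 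Then I must show $\langle G - \bar{G}, Y - \bar{G}\rangle \le 0$ for all $Y \in \K$. Decompose $G = JGJ + (G - JGJ)$; for any $Y \in \K$ we have $JYJ = Y$ and $J\bar{G}J = \bar{G}$, so $\langle G - JGJ, Y - \bar{G}\rangle = \langle G - JGJ, JYJ - J\bar{G}J\rangle = \langle J(G-JGJ)J, Y - \bar{G}\rangle = 0$ since $J(G - JGJ)J = JGJ - JGJ = 0$. Therefore $\langle G - \bar{G}, Y - \bar{G}\rangle = \langle JGJ - \bar{G}, Y - \bar{G}\rangle \le 0$, where the last inequality is exactly the variational characterization of $\bar{G} = \Pi_{\S^n_+}(JGJ)$ applied to the point $Y \in \S^n_+$. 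This establishes $\bar{G} = \Pi_{\K}(G)$.

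The main obstacle — though a mild one — is making rigorous the claim that $\Pi_{\S^n_+}$ commutes with conjugation by the projector $J$, i.e.\ that the subspace $\{M : JMJ = M\}$ is invariant under $\Pi_{\S^n_+}$. The cleanest way is to pick an orthonormal basis adapted to the splitting $\R^n = \mathrm{range}(P)^\perp \oplus \mathrm{range}(P)$, in which every $M$ with $JMJ = M$ is block-diagonal of the form $\mathrm{blkdiag}(M_0, 0)$ with $M_0 \in \S^{n-r}$; then $\Pi_{\S^n_+}(M) = \mathrm{blkdiag}(\Pi_{\S^{n-r}_+}(M_0), 0)$ still satisfies $J(\cdot)J = \mathrm{id}$. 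Alternatively, one can invoke the fact that the eigenvalue-clipping operator $\Pi_{\S^n_+}(M) = \sum_{\lambda_i > 0}\lambda_i v_i v_i^\top$ does not move eigenvectors, so if $M$ has $\mathrm{range}(P) \subseteq \ker M$ then every eigenvector with nonzero eigenvalue lies in $\mathrm{range}(P)^\perp$, hence so does the range of $\Pi_{\S^n_+}(M)$. Everything else is routine linear algebra, and the result is exactly the stated generalization of \cite[Lemma 7]{lee2022escaping} from the rank-one case to arbitrary full-column-rank $P$.
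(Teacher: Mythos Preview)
Your proof is correct and rests on the same two facts as the paper's --- that $\bar G:=\Pi_{\S^n_+}(JGJ)\in\K$, and that $G-JGJ$ is ``invisible'' to differences $Y-\bar G$ with $Y\in\K$ --- but the packaging differs. The paper expands $\|X-JGJ\|^2$ for $X\in\K$ and shows it equals $\|X-G\|^2$ plus a term independent of $X$, concluding $\Pi_\K(G)=\Pi_\K(JGJ)$; then, since $\Pi_{\S^n_+}(JGJ)\in\K\subseteq\S^n_+$, the projection onto the larger set $\S^n_+$ already lands in the smaller set $\K$, so $\Pi_\K(JGJ)=\Pi_{\S^n_+}(JGJ)$. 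You instead verify the obtuse-angle variational inequality directly, splitting $G-\bar G=(G-JGJ)+(JGJ-\bar G)$ and observing that the first piece is Frobenius-orthogonal to $Y-\bar G$ whenever $JYJ=Y$ and $J\bar GJ=\bar G$; this orthogonality is exactly the inner-product form of the paper's constant-difference computation. Your argument is a bit more conceptual (it makes explicit the invariant subspace $\{M:JMJ=M\}$ and why $\Pi_{\S^n_+}$ preserves it), while the paper's is a shorter direct calculation; neither needs anything the other does not.
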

\begin{proof}
First, for any $X\in \K$, it holds that $XP=0$ and $P^\top X=0$. Therefore, we have
\begin{equation*}
    \begin{aligned}
        &\|X-JGJ\|^2\\
        =&\|X-(I-PP^\dagger)G(I-PP^\dagger)\|^2\\
        =&\|X-G+PP^\dagger G+ GPP^\dagger -PP^\dagger GPP^\dagger\|^2\\
        =&\|X-G\|^2+2\<X-G,PP^\dagger G+ GPP^\dagger - PP^\dagger G PP^\dagger\> + \|PP^\dagger G+ GPP^\dagger - PP^\dagger G PP^\dagger\|^2\\
        =&\|X-G\|^2-2\<G,PP^\dagger G+ GPP^\dagger - PP^\dagger G PP^\dagger\> + \|PP^\dagger G+ GPP^\dagger - PP^\dagger G PP^\dagger\|^2,
    \end{aligned}
\end{equation*}
where $P^\dagger=(P^\top P)^{-1}P^\top$ is the Moore–Penrose inverse. As a consequence, it holds that $\Pi_{\K}(G)=\Pi_{\K}(JGJ)$. Furthermore, we observe that
\[
\langle PP^\top, JGJ \rangle = \langle JPP^\top J, G \rangle = \langle (I - PP^\dagger) PP^\top (I - PP^\dagger), G \rangle = 0,
\]
which implies that $(JGJ)P = 0$, meaning that $JGJ$ has zero eigenvalues 
with the columns of $P$ as eigenvectors. Consequently, we find that $\Pi_{\mathbb{S}^n_+}(JGJ) \in \K$, since the projection onto $\mathbb{S}^n_+$ only sets negative eigenvalues to zero in the eigendecomposition. Given that $\K \subseteq \mathbb{S}^n_+$, it follows that $\Pi_{\K}(G) = \Pi_{\mathbb{S}^n_+}(JGJ)$.
\end{proof}
By Lemma \ref{lemma-proj-JGJ}, the dual problem \eqref{proj-dual} is equivalent to
\begin{equation}\label{proj-dual-new1}
 \min_y\left\{ \frac{1}{2}\|\Pi_{\S^{n+1}_+}(J(G+\H^*(y))J)\|^2-\langle q,y\rangle \right\}.
 \end{equation}
 Define $\widehat{G}:=JGJ$ and $\widehat{\H} (X):=\H(JXJ)$, then \eqref{proj-dual-new1} can be written as
\begin{equation}\label{proj-dual-new2}
 \min_y\left\{ \frac{1}{2}\|\Pi_{\S^{n+1}_+}(\widehat{G}+\widehat{\H}^*(y))\|^2-\langle q,y\rangle \right\},
 \end{equation}
 which can be solved by the Semismooth Newton-CG method, see, for example, \cite{qi2006quadratically,SDPNAL}. However, directly solving \eqref{proj-dual-new2} typically requires numerous SSN iterations and CG steps to reach 
 a reasonably accurate approximate solution, as indicated by our numerical experiments. In Subsection \ref{subsec-preprocess} and \ref{subsec-recover-dual}, we introduce a preprocessing technique and a warm-start technique to mitigate these issues.


\subsection{Equivalence between ALM subproblems}\label{subsec-alg-equivalence}

When applying RiNNAL+ to the two different reformulations \eqref{SDP-RLT} and \eqref{DNN}, the subproblems \eqref{ALM-sub-Y} differ in variable dimensions and constraints. However, we prove that they are equivalent under the invertible linear transformation $\Phi:\S^{n+1}\rightarrow\S^{n+l+1}$ defined in \eqref{phi}. We first denote
\begin{equation*}
\begin{aligned}
{\mathrm{F}'_R}:=\F\cap \mathbb{S}_{+}^{n+1},\quad 
{\mathrm{F}'_D}:=\F_D\cap \mathbb{S}_{+}^{n+l+1},
\end{aligned}
\end{equation*}
which are the feasible regions of the ALM subproblem \eqref{ALM-sub-Y} of \eqref{SDP-RLT} and \eqref{DNN}, respectively.
Similar to Theorem \ref{lemma-F1-F2}, we can prove the following result.
\begin{lemma}\label{lemma-F1F2sub}
    The map $\Phi$ is a bijection from ${\mathrm{F}'_R}$ to ${\mathrm{F}'_D}$.
\end{lemma}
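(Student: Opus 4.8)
The plan is to mirror the proof of Lemma~\ref{lemma-F1-F2} almost verbatim, since the only difference between $\mathrm{F}_R$ and $\mathrm{F}'_R$ (respectively $\mathrm{F}_D$ and $\mathrm{F}'_D$) is that we have dropped the inequality constraints $Y\in\I_R$ and the nonnegativity constraints $Y\in\bN^{n+1}$ (respectively $Y'\in\bN^{n+l+1}$). Thus the argument becomes strictly easier: we no longer need to track the sign conditions $s\geq 0$, $Z\geq 0$, $W\geq 0$, $x\geq 0$, $X\geq 0$, and we only need to handle the PSD constraint and the linear constraints defining $\F$ and $\F_D$. As before, it suffices to prove three things: (1) $\Phi(\mathrm{F}'_R)\subseteq\mathrm{F}'_D$, (2) $\Phi$ restricted to $\mathrm{F}'_R$ is surjective onto $\mathrm{F}'_D$, and (3) $\Phi$ restricted to $\mathrm{F}'_R$ is injective.

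For step (1), take $Y=[z,x^\top;x,X]\in\mathrm{F}'_R=\F\cap\S^{n+1}_+$. Since $z=1$, the block form of $\Phi(Y)$ from \eqref{phi} has diagonal blocks $1$, $X$, and $GXG^\top-Gxd^\top-dx^\top G^\top+dd^\top$, with off-diagonal blocks $x$, $d-Gx$, and $dx^\top-GX$. The congruence form in \eqref{phi} shows $Y\succeq 0\Rightarrow\Phi(Y)\succeq 0$. Writing $s:=d-Gx$, $Z:=dx^\top-GX$, $W:=GXG^\top-Gxd^\top-dx^\top G^\top+dd^\top$, and $x':=[x;s]$, the same block computations as in the proof of Lemma~\ref{lemma-F1-F2} give $A'x'=b'$, $A'X'=b'(x')^\top$, and $\operatorname{diag}_B(X')=\operatorname{diag}_B(X)=x_B=x'_B$, so $\Phi(Y)\in\F_D$, hence $\Phi(Y)\in\mathrm{F}'_D$. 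For step (2), given $Y'=[1,x^\top,s^\top;x,X,Z^\top;s,Z,W]\in\mathrm{F}'_D$, set $Y_0:=[1,x^\top;x,X]$; the relation $Y'\in\F_D$ forces $Gx+s=d$, $GX+Z=dx^\top$, and $GZ^\top+W=ds^\top$, from which one computes directly that $\Phi(Y_0)=Y'$, and in particular $Y_0\succeq 0$ because it is a principal submatrix of $Y'\succeq 0$, while $Y_0\in\F$ follows from the first $m$ block-rows of the $\F_D$ constraints together with $\operatorname{diag}_B(X)=\operatorname{diag}_B(X')=x'_B=x_B$. For step (3), injectivity is immediate since $Y$ is recoverable as the leading $(n+1)\times(n+1)$ principal submatrix of $\Phi(Y)$.

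I do not expect any real obstacle here; the proof is a routine adaptation of Lemma~\ref{lemma-F1-F2} with the sign constraints stripped away. The only point requiring a moment's care is verifying that $\Phi(Y_0)=Y'$ in the surjectivity step, i.e. that the three quadratic blocks of $\Phi(Y_0)$ (the $(1,3)$, $(2,3)$, and $(3,3)$ blocks) really do coincide with $s$, $Z^\top$, and $W$; but this is exactly the content of the displayed equation \eqref{eq-Y'-F'} in the proof of Lemma~\ref{lemma-F1-F2}, reused unchanged. Hence the statement follows, and one could reasonably abbreviate the write-up by saying ``the proof is identical to that of Lemma~\ref{lemma-F1-F2} after deleting all references to $\I_R$, $\bN^{n+1}$, and $\bN^{n+l+1}$.''
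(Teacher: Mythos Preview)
Your proposal is correct and takes essentially the same approach as the paper: the paper simply states that the proof ``directly follows from the proof of Lemma~\ref{lemma-F1-F2}'' and omits the details, which is precisely your concluding observation that one need only delete the references to $\I_R$, $\bN^{n+1}$, and $\bN^{n+l+1}$ from that earlier argument. Your explicit verification of the three steps (well-definedness, surjectivity via the principal submatrix, and injectivity) is sound and, if anything, more detailed than what the paper provides.
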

We omit the proof here since it directly follows from the proof of Lemma \ref{lemma-F1-F2}. 
Next, consider the following ALM subproblem of \eqref{DNN}:
\begin{equation}
\label{prob-D2-ALM-sub}
\min_{Y'}\left\{\<C',Y'\>+\frac{1}{2\sigma}\|\Pi_+(\mu'-\sigma Y')\|^2:\ Y'\in {\mathrm{F}'_D}\right\},
\end{equation}
where $\mu'\in\S^{n+1}$ is the dual variable. 
Combining Lemma \ref{lemma-F1F2sub} with the relation $\langle C',\Phi(Y)\rangle=\langle C,Y\rangle$, we have that 
\eqref{prob-D2-ALM-sub} is equivalent to the following problem with respect to $Y$:
\begin{equation}
\label{prob-D1-ALM-sub}
\min_{Y}\left\{\<C,Y\>+\frac{1}{2\sigma}\|\Pi_+(\mu'-\sigma \Phi(Y))\|^2:\ Y\in {\mathrm{F}'_R} \right\}, 
\end{equation}
which is exactly the subproblem \eqref{ALM-sub-Y} when applying RiNNAL+ to \eqref{SDP-RLT}. 
\begin{remark}
Although the subproblems of RiNNAL+ for \eqref{SDP-RLT} and \eqref{DNN} are theoretically equivalent through a linear transformation, it is important to note that their numerical performance differs. This discrepancy arises because solving the equivalent problems \eqref{prob-D2-ALM-sub} and \eqref{prob-D1-ALM-sub} with different variables and constraints can lead to distinct computational behaviors. Furthermore, since the underlying algebraic varieties are different, the time required for computing the projection, retraction, and eigenvalue decomposition of the dual variable differs between the two formulations. Based on the numerical experiments in subsection \ref{subsec-pp}, applying RiNNAL+ to \eqref{SDP-RLT} is often more efficient in reducing computational costs compared to solving \eqref{DNN}.
\end{remark}


\section{Acceleration techniques}\label{sec-acceleration}

In this section, we introduce several strategies to enhance the efficiency and stability of RiNNAL+:
\begin{itemize}
    \item \textbf{Modeling perspective:} We propose a preprocessing technique to mitigate the large condition numbers of various linear systems encountered when solving the ALM subproblem using the PG method.
    \item \textbf{Riemannian gradient descent step:} We employ a random perturbation technique to avoid singularity issues in the projection and retraction subproblem within the Riemannian gradient descent method.
    
    \item \textbf{Projected gradient (PG) step:} We recover the dual variables from the low-rank phase, providing a warm-start initial point for solving the SSN subproblem in the PG step. This approach can significantly reduce the computational cost of the PG step.
\end{itemize}


\subsection{Preprocessing technique}\label{subsec-preprocess}

In this subsection, we introduce a preprocessing technique to simplify some of the constraints in $Y\in\F\cap\S^{n+1}_+$ that appears in the ALM subproblem \eqref{ALM-sub-Y} into diagonal constraints. This simplification makes the problem easier to handle and can reduce the number of preconditioned conjugate-gradient iterations needed to solve the 
linear systems in the PG step. Let $\sigma\in\R_+$, $\lambda\in \R^{d_1}$ and $\mu\in \R^{d_2}_+$ be fixed and recall that $\L(Y):=L_{\sigma}(Y,\lambda,\mu)$. Consider the following equivalent formulation of the ALM-subproblem \eqref{ALM-sub-Y}:
\begin{equation}\label{ALM-sub-1}
\min \left\{ \L(Y) :\ \langle H_0, Y  \rangle = 1,\ \langle H_k, Y  \rangle = 0 \,\, \forall k \in B,\ \langle H_{n+1},\ Y \rangle = 0,\ Y\succeq 0\right\},
\end{equation}
where 
\begin{align*}
H_0 := \begin{bmatrix} 1 & 0_{1 \times n} \\ 0_{n \times 1} & 0_{n \times n} \end{bmatrix},\ 
H_k := \begin{bmatrix} 0 & -\frac{1}{2} e_k^{\top} \\ -\frac{1}{2} e_k & \operatorname{diag}(e_k) \end{bmatrix},\
H_{n+1}:=PP^\top,
\end{align*}
$e_k \in \mathbb{R}^n$ is the vector whose $k$-th entry is 1 and all other entries are 0, and $P$ is defined after equation \eqref{proj-0}. 
Define the invertable matrix $K:=\begin{bmatrix} 1 & \frac{1}{2}e^\top \\ 0_{n \times 1} & \frac{1}{2}I_n\end{bmatrix}$. By the change of variable $\widehat{Y}=(K^\top)^{-1}YK^{-1}$, \eqref{ALM-sub-1} can be equivalently written as
\begin{equation}\label{ALM-sub-2}
\begin{aligned}
    \min \Big\{ \L(K^\top \widehat{Y}K) :  \; &\langle KH_0K^\top,  \widehat{Y}  \rangle = 1, \; \langle 4KH_kK^\top+KH_0K^\top, \widehat{Y}  \rangle = 1 \, \forall k \in B, \\
    & \langle KH_{n+1}K^\top,  \widehat{Y} \rangle = 0, \; \widehat{Y} \succeq 0 \Big\}.
\end{aligned}
\end{equation}
Note that $KH_{n+1}K^\top=KP(KP)^\top$ and
\begin{align*}
KH_0K^\top := \begin{bmatrix} 1 & 0_{1 \times n} \\ 0_{n \times 1} & 0_{n \times n} \end{bmatrix},\
4KH_kK^\top+KH_0K^\top := \begin{bmatrix} 0 & 0_{1 \times n} \\ 0_{n \times 1}& \operatorname{diag}(e_k) \end{bmatrix}.
\end{align*}
Define $\mathcal{D}: \S^{n+1}\rightarrow \R^{p+1}$ such that $\mathcal{D}(Y)=[Y_{11};\diag_B(Y_{22})]$. 
Then \eqref{ALM-sub-2} is equivalent to
\begin{equation}\label{proj-1}
\begin{aligned}
    \min \left\{ \L(K^\top \widehat{Y}K) :  \; \mathcal{D}(\widehat{Y})=e,\ \langle NN^\top, \widehat{Y} \rangle = 0, \; \widehat{Y} \succeq 0 \right\},
\end{aligned}
\end{equation}
where $N:=KP$ and $e$ is the vector of all ones. We can {apply the PG method to \eqref{proj-1}}. For a given starting point ${Y_0}$ provided by the low-rank phase, define 
\[
\widehat{Y}_0=(K^{-1})^\top {Y_0}K^{-1},\quad \widehat{G}=\widehat{Y}_0-tK\nabla \L({Y}_0)K^\top,
\]
then the PG method updates $\widehat{Y}_0$ as follows: 
\begin{equation}\label{ALM-sub-3}
\begin{aligned}
    \widehat{Y}_1 =\operatorname{argmin} \left\{ \frac{1}{2}\|\widehat{Y}-\widehat{G}\|^2 :  \; \mathcal{D}(\widehat{Y})=e,\ \langle NN^\top, \widehat{Y} \rangle = 0, \; \widehat{Y} \succeq 0 \right\}.
\end{aligned}
\end{equation}
Define $J:=I_{n+1}-NN^\dagger$. Then the dual problem of \eqref{ALM-sub-3} is
\begin{equation}\label{ALM-sub-3-dual}
\mathcal{D}\big( J\Pi_{\S^{n+1}_+}\big(J(\widehat{G}+\mathcal{D}^*(y))J\big)J\big)-e=0.
\end{equation}
We can apply the SSN method to solve \eqref{ALM-sub-3-dual}. Note that a special case of the projection subproblem \eqref{ALM-sub-3} is the nearest correlation matrix problem \cite{qi2006quadratically}, where an efficient implementation of the SSN method is provided at \url{https://www.polyu.edu.hk/ama/profile/dfsun/CorrelationMatrix.m}. We modify the code to solve the more general problem \eqref{ALM-sub-3}, and use the exact diagonal preconditioner to accelerate the preconditioned conjugate gradient (PCG) method. To demonstrate the benefit of the preprocessing technique, we consider the following Shor relaxation of a binary integer quadratic (BIQ) programming problem, which is in the form of \eqref{ALM-sub-1}:
\begin{equation}\label{shor-biq-without-pre}
\min\left\{\< C,Y\>:\ \diag(X)=x,\ Y=\begin{bmatrix}
    1&x^\top\\x&X \end{bmatrix}\in\S^{n+1}_+ \right\}.
\end{equation}
After applying the preprocessing technique, it can be equivalently reformulated as
\begin{equation}\label{shor-biq-with-pre}
\begin{aligned}
    \min \left\{ \<KCK^\top, \widehat{Y}\> :  \; \diag(\widehat{X})=e,\   \widehat{Y}=\begin{bmatrix}
    1&\widehat{x}^\top\\\widehat{x}&\widehat{X} \end{bmatrix}\in\S^{n+1}_+ \right\},
\end{aligned}
\end{equation}
where the reformulation has the structure of the standard SDP relaxation 
of a max-cut problem. To verify the effectiveness of the preprocessing technique, we compare the performance of SDPNAL+ in solving \eqref{shor-biq-without-pre} and \eqref{shor-biq-with-pre}. The numerical results demonstrate that the preprocessing technique significantly reduces the total computation time from 36 seconds to 5 seconds. Additionally, the number of ADMM+ iterations, SSN iterations, SSN subproblems, and PCG iterations decreases from $(2100,30,237,8305)$ to $(400,18,43,607)$, respectively. Since the set $\F\cap\S^{n+1}_+$ is general and widely used in many relaxation problems, the technique can be used as a subroutine for many existing solvers to make the problem constraints easier to handle.


\subsection{Random perturbation technique}\label{subsec-perturnation}

In this subsection, we adopt the random perturbation technique used in \cite{tang2023feasible} to avoid the nonsmooth points in the variety $\M_r$. Recall that the algebraic variety of the low-rank phase is
\begin{equation*}
\mathcal{M}_r := \left\{R \in \mathbb{R}^{n \times r} :\  AR=be_1^\top,\ \operatorname{diag}_B(RR^{\top})=R_B e_1 \right\},
\end{equation*}
which can be expressed as
\begin{equation*}
\mathcal{M}_r = \left\{R \in \mathbb{R}^{n \times r} :\  AR=be_1^\top,\ \operatorname{diag}_B\big((2R-ee_1^\top)(2R-ee_1^\top)^{\top}\big)=e \right\}.
\end{equation*}
The LICQ condition may not hold at some points $R\in\M_r$. However, we can always add a small perturbation to the spherical constraints, and consider the following set:
\begin{equation*}
\mathcal{M}_{r,v} := \left\{R \in \mathbb{R}^{n \times r} :\  AR=be_1^\top,\ \operatorname{diag}_B\big((2R-ee_1^\top)(2R-ee_1^\top)^{\top}\big)=e+v \right\},
\end{equation*}
where $v\in\R^p$ is a random vector such that $\|v\|=\epsilon$ and $\epsilon>0$ is a given small scalar. We can ensure the generic smoothness of the new set $\M_{r,v}$ by the following lemma.

\begin{lemma}{\cite[Theorem 4]{tang2023feasible}}
    For a generic $v$, every point of $\M_{r,v}$ satisfies the LICQ property.
\end{lemma}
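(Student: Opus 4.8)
The statement is established in \cite[Theorem 4]{tang2023feasible}; here I outline the argument I would give. The plan is to reduce the LICQ property for the constraint system defining $\M_{r,v}$ to the single statement that $v$ is a regular value of a fixed polynomial map, and then to invoke Sard's theorem.

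First I would note that the linear constraints $AR=be_1^\top$ do not involve $v$, so every $R\in\M_{r,v}$ lies in the fixed affine subspace $\L:=\{R\in\R^{n\times r}:AR=be_1^\top\}$, which is nonempty because $A$ has full row rank (so $b\in\operatorname{col}(A)$). Identifying $\L\cong\R^{N}$ with $N=r(n-m)$, let $\psi:\L\to\R^{p}$ be the polynomial map with $\psi(R)_k:=\|2R_k-e_1^\top\|^2-1$ for $k\in B$, so that $\M_{r,v}=\psi^{-1}(v)$. The $mr$ gradients of the linear constraints are linearly independent (again because $A$ has full row rank) and span $(T_R\L)^\perp$; a short linear-algebra computation then shows that the full collection of $mr+p$ constraint gradients is linearly independent at $R$ exactly when the $p$ spherical-constraint gradients, projected onto $T_R\L$, are linearly independent, i.e. exactly when $D\psi(R):T_R\L\to\R^{p}$ is surjective. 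Hence $\M_{r,v}$ satisfies LICQ at every one of its points if and only if $v$ is a regular value of $\psi$ (vacuously so if $\M_{r,v}=\emptyset$).

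Next I would apply Sard's theorem to the smooth (indeed polynomial) map $\psi:\L\to\R^{p}$: its set of critical values has Lebesgue measure zero in $\R^{p}$, so $v$ is a regular value of $\psi$ for almost every $v$. Since $\psi$ is polynomial, one can strengthen "measure zero'' to "contained in a proper real-algebraic subvariety of $\R^{p}$'' via a semialgebraic (quantitative) version of Sard's theorem, which yields the genericity statement in the intended sense and in particular shows that the conclusion holds for a generic $v$ on the sphere $\{\|v\|=\epsilon\}$ used in the algorithm.

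The main obstacle is the first step: one must carefully track the block structure of the Jacobian (a $v$-independent $mr$-dimensional linear block, plus a $p$-dimensional spherical block whose gradients are row-supported on the index set $B$) and verify that it collapses the full LICQ condition to surjectivity of $D\psi(R)$ on $T_R\L$. Once this reduction is in place, the genericity conclusion is a routine application of Sard's theorem.
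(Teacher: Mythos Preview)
The paper does not supply its own proof of this lemma; it simply quotes the result from the cited reference. Your outline---reduce LICQ on $\M_{r,v}$ to the statement that $v$ is a regular value of the polynomial map $\psi:\L\to\R^{p}$, then invoke Sard's theorem (in its semialgebraic form) to conclude genericity---is the standard parametric transversality argument and is correct. The linear-algebra reduction you describe is exactly right: because $A$ has full row rank, the $mr$ linear constraint gradients are independent and span $(T_R\L)^\perp$, so the full gradient family is independent iff the projections of the $p$ spherical gradients onto $T_R\L$ are independent, i.e.\ iff $D\psi(R)|_{T_R\L}$ is surjective.

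One small point worth tightening: the algorithm draws $v$ from the sphere $\{\|v\|=\epsilon\}$ for a \emph{fixed} $\epsilon$, not from all of $\R^{p}$. Sard gives you a semialgebraic critical-value set of dimension at most $p-1$ in $\R^{p}$, but you still need to rule out the (pathological) possibility that this set contains the entire sphere of that particular radius. The cleanest fix is to note that the critical-value set, being semialgebraic of dimension $\le p-1$, can contain at most finitely many full spheres centered at the origin (otherwise it would have dimension $p$); hence for all but finitely many $\epsilon$---and in particular for generic small $\epsilon$---the intersection with $\{\|v\|=\epsilon\}$ is a proper lower-dimensional subset of the sphere. This is a minor technicality and does not affect the substance of your argument.
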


Thus, when RiNNAL+ encounters a nonsmooth point, we can introduce a random perturbation to $\M_r$, ensuring that the projection and retraction steps on the 
slightly perturbed manifold $\M_{r,v}$ can be successfully computed.


\subsection{Warm start technique}\label{subsec-recover-dual}

To accelerate the computation in the SSN method for solving the projection problem \eqref{ALM-sub-3-dual}, we can recover an initial point $y$ from the low-rank phase as a warm-start point, which is obtained by considering the correspondence between the KKT conditions of different problems. On the one hand, the KKT condition of the ALM subproblem \eqref{ALM-sub-1} is
\begin{equation}\label{KKT-ALM-sub}\tag{KKT-1}
\nabla \L(Y) - \alpha H_0 - \sum_{k\in B}\mu_k H_k - \beta H_{n+1}=S_1,\ \langle S_1,Y\rangle =0, \ S_1\succeq 0,
\end{equation}
where $(\alpha,\mu,\beta,S_1)\in\R\times\R^{p}\times\R\times\S^{n+1}$ are the multipliers that can be recovered from the low-rank algorithm, see \cite[Subsection 4.1]{RiNNAL} for more details. On the other hand, the KKT condition of the projection problem \eqref{ALM-sub-3} is
\begin{align}\label{KKT-2}\tag{KKT-2}
\widehat{Y}-\widehat{G}-\mathcal{D}^*(y)-y_0 NN^\top = S_2,\ \langle S_2,\widehat{Y}\rangle =0,\ S_2\succeq 0,
\end{align}
where $(y,y_0,S_2)\in\R^{p+1}\times\R\times\S^{n+1}$ are the multipliers that need to be recovered.
When approaching the optimality of the ALM subproblem \eqref{ALM-sub-1}, the starting point $Y_0$ provided by the low-rank phase is also the optimal solution of the projection problem \eqref{ALM-sub-3}, i.e.,
\[
\widehat{Y}=\widehat{Y}_0=(K^{-1})^{\top}Y_0K^{-1}=(K^{-1})^{\top}YK^{-1}.
\]
Note that $\widehat{G}=\widehat{Y}-tK\nabla \L(Y)K^\top$. Therefore, \eqref{KKT-2} can be written as:
\begin{equation}\label{KKT-proj-3}\tag{KKT-3}
tK\nabla \L(Y)K^\top-\mathcal{D}^*(y)-y_0 NN^\top = S_2,\ \langle S_2,(K^{-1})^{\top}YK^{-1}\rangle =0,\ S_2\succeq 0.
\end{equation}
Comparing \eqref{KKT-ALM-sub} and \eqref{KKT-proj-3}, we can recover the dual variable $y_0$, $y$ and $S_2$ of the projection problem by 
\[
y_0=t\beta,\quad y=t\left[\alpha-\frac{1}{4}\sum_{k=1}^{p} \mu_k;\ \frac{1}{4}\mu\right],\quad S_2=tKS_1K^{\top}.
\]
When the initial point of the PG step is near the optimal solution of the ALM subproblem \eqref{ALM-sub-Y} (equivalently \eqref{ALM-sub-1}), the dual variable $(y,y_0)$ constructed above is also near the optimal solution of the projection problem \eqref{ALM-sub-3}. Therefore, we can use $(y,y_0)$ obtained above to warm start the PG step, which can significantly decrease the number of SSN iterations.


\section{Numerical experiments}\label{sec-experiments}

In this section, we conduct numerical experiments to illustrate the effectiveness of RiNNAL+ for solving SDP 
relaxation problems of the form \eqref{prob-DNN-general}. All experiments are performed using {\sc Matlab} R2023b on a workstation equipped with Intel Xeon E5-2680 (v3) processors and 96GB of RAM.

\medskip

\noindent\textbf{Baseline Solvers}. 
We compare the performance of RiNNAL+ with the solver SDPNAL+ \cite{SDPNAL,SDPNALp1,SDPNALp2}.
Although various other ALM-based algorithms and low-rank SDP solvers exist, we exclude them from our comparison because they are either inapplicable to \eqref{prob-DNN-general} or not competitive with SDPNAL+, as observed in \cite{RiNNAL}. Similarly, we do not consider RiNNAL, as it cannot directly handle \eqref{MBQP} with inequality constraints.
\medskip

\noindent\textbf{Stopping Conditions}. Based on the KKT conditions \eqref{KKT} for \eqref{prob-DNN-general}, we define the following relative KKT residual to assess the accuracy of the solution computed by RiNNAL+:
\small{
\begin{align*}
\operatorname{R_p} := \cfrac{\sqrt{\|\mathcal{A}(Y)- {d}\|^2+\|\B(Y)- {g}\|^2+\|\Pi_+(h-\C(Y))\|^2} }{1+\sqrt{\| {d} \|^2 + \| {g} \|^2+\|h\|^2}},\
\operatorname{R_d} :=  \frac{\|\Pi_{\S_+^{n+1}}(S)\|}{1+\|S\|},\
\operatorname{R_{c}} := \frac{|\langle Y, S\rangle|}{1+\|Y\|+\|S\|}.
\end{align*}
}
\normalsize
Note that we do not include the dual KKT residual and complementarity of the constraint $Y\in\I$ because they are always equal to zero due to the update of the ALM multiplier $\mu$ in \eqref{ALM-mu}. We also omit the primal KKT residual of the constraint $Y\in \S^{n+1}$ because it is always equal to zero due to the low-rank factorization \eqref{lr-R}.
For a given tolerance $\tol> 0$, RiNNAL+ is terminated when the maximum residual satisfies $\operatorname{R_{max}}:=\max\{\operatorname{R_p},\operatorname{R_d},\operatorname{R_c}\}<\tol$ or the maximum time limit $\timelimit$ is reached. In our experiments, we set $\tol = 10^{-6}$ and $\timelimit = 3600\tt{(secs)}$ for all solvers.

\medskip

\noindent\textbf{Implementation}. 
In RiNNAL+, we employ a Riemannian gradient descent method with Barzilai-Borwein steps and non-monotone line search to solve the augmented Lagrangian subproblems (see \cite{RiNNAL,iannazzo2018riemannian, gao2021riemannian, tang2023feasible}). The penalty parameter $\sigma_k$ is initialized at $\sigma_0 = 1$ and adjusted dynamically: it is increased by a factor of 1.5 if $\operatorname{R_p}/\operatorname{R_d}\geq 2$ and decreased by a factor of 1.5 if $\operatorname{R_p}/\operatorname{R_d}\leq 1/5$. The initial rank $r_0$ is set to be $\operatorname{min}\{200,\lceil n/5 \rceil \}$. The stepsize $t_k$ of the PG step is determined as $1/\sigma_k$. The initial point $R_0$ is randomly selected from the feasible region $\M_{r_0}$. In each ALM iteration, the low-rank phase is performed with the maximum number of Riemannian gradient descent iterations capped at 50. Additionally, a PG step is executed 
after the low-rank phase at every 5 outer ALM iterations or whenever singularity issues occur.

\medskip

\noindent\textbf{Table Notations.} We use `-' to indicate that an algorithm did not achieve the required tolerance $\tol$ within the maximum time limit $\timelimit$. We report all results for problem sizes with $n \leq 1500$, even if the algorithm does not reach the desired accuracy. For $n > 1500$, SDPNAL+ consistently fails to converge and yields solutions with poor accuracy. Therefore, we do not report its results in these cases. A superscript ``$\dagger$'' is appended to the KKT residual value to indicate that the algorithm attained moderate accuracy, though not the required accuracy.
For the column labeled ``Iteration'' associated with SDPNAL+, the first entry denotes the number of outer iterations, the second entry denotes the total number of semismooth Newton inner iterations, and the third indicates the total number of ADMM+ iterations. Similarly, for the column labeled ``Iteration'' associated with RiNNAL+, the first entry corresponds to the number of ALM iterations, the second denotes the total number of Riemannian gradient descent iterations, and the third 
reports the total number of PG steps.
The column labeled ``Rank'' indicates the number of columns of the final iterate $R$ for RiNNAL+ and the final rank of the output matrix $Y$ for SDPNAL+. 
The ``Objective'' column denotes the value of the objective function, while the total computation time is listed under ``Time''. Additionally, the final column, labeled ``TPG'', reports the time (in seconds) consumed by PG steps.


\subsection{Tightness of different relaxations}\label{subsec-tightness-numerical}

In this subsection, we compare the bound tightness of various relaxations. We focus on the following binary integer quadratic (BIQ) programming problem:
\begin{equation}\label{BIQ-MBQP}
v^*:=\min\left\{x^{\top} Q x+2 c^{\top}x:\  x \in\{0,1\}^n\right\}\tag{BIQ}
\end{equation}
and its strengthened formulation:
\begin{equation}\label{BIQ-SMBQP}
\min\left\{x^{\top} Q x+2 c^{\top}x:\ x\leq e,\ x \in\{0,1\}^n\right\}.\tag{BIQ-S}
\end{equation}
To evaluate the tightness, we solve relaxation problems of \eqref{BIQ-MBQP} and \eqref{BIQ-SMBQP} to obtain the lower bound $v$, and compare the relative gap between the lower bound $v$ and the optimal solution $v^*$. The relative gap is defined as:
\begin{equation*}
    \%\text{gap} = \frac{v^* - v}{v^*} \times 100\%.
\end{equation*}
We choose the bqp100.1 instance from the ORLIB library\footnote{Dataset available at \url{http://people.brunel.ac.uk/~mastjjb/jeb/info.html}.} maintained by J.E. Beasley to generate the coefficient matrix $Q$ and $c$. Then, we use Gurobi \cite{gurobi} to solve \eqref{BIQ-MBQP} and \eqref{BIQ-SMBQP} exactly with $\tol=10^{-10}$ and employ SDPNAL+ to solve the relaxation problems with the same tolerance. The tightness results are summarized in Table \ref{tab-tightness}.
\begin{table}[h!]
    \centering
    \renewcommand{\arraystretch}{1.25} 
    \begin{tabular}{|c|cc|cc|c}
        \cline{1-5}
         & \multicolumn{2}{c|}{\eqref{BIQ-MBQP}} & \multicolumn{2}{c|}{\eqref{BIQ-SMBQP}} & \multirow{6}{*}{$\xdownarrow{1.6cm}$}\\
        \cline{1-5}
        {$v^*=$\, -7.97e+03} & lower bound {$v$} & {\%gap} & lower bound {$v$} & {\%gap} & \\
        \cline{1-5}
        \eqref{SHOR} & -8.72110529e+03 & 9.424 & -8.72110529e+03 & 9.424 &  \\
        \cline{1-5}
        \eqref{SDP-RLT} & -8.38038443e+03 & 5.149 & -8.03665843e+03 & 0.836 & \\
        \cline{1-5}
        \eqref{DNN} & -8.38038443e+03 & 5.149 & -8.03665843e+03 & 0.836 & \\\cline{1-5}
        \eqref{COMP} & \multicolumn{2}{c|}{(not applicable)} & -8.03665843e+03 & 0.836 & \\
        \cline{1-5}
        \multicolumn{5}{c}{$\xrightarrow[\hspace{11cm}]{}$} & {Tighter} \\
    \end{tabular}
    \caption{Tightness of different relaxations for the bqp100.1 instance.}
    \label{tab-tightness}
\end{table}

As shown in Table \ref{tab-tightness}, the lower bounds obtained from \eqref{SDP-RLT}, \eqref{DNN}, and \eqref{COMP} are identical and significantly tighter than that of \eqref{SHOR}. This observation aligns with the theoretical results on tightness presented in Theorem \ref{thm-1} and Theorem \ref{thm-tightness-SMBQP}. Additionally, the lower bounds of \eqref{BIQ-SMBQP} are much tighter than those of \eqref{BIQ-MBQP}, demonstrating that the strengthening technique of adding the redundant constraint $x_B\leq e$, as discussed in Subsection \ref{subsec-application}, can substantially reduce the relaxation gap.


\subsection{Performance on different relaxation reformulations}\label{subsec-pp}

In this subsection, we analyze the numerical performance of RiNNAL+ and SDPNAL+ for different relaxation formulations: \eqref{SDP-RLT}, \eqref{DNN}, and \eqref{COMP}. Although these three formulations are theoretically equivalent, their computational performance varies when solved by the same algorithm.

We conduct experiments on strengthened versions of BIQ problems \eqref{BIQ-SMBQP} and quadratic knapsack problems \eqref{prob-qkp-v2}, which will be described in detail in subsections \ref{subsec-BIQ} and \ref{subsec-QKP}, respectively. To evaluate the computational efficiency of RiNNAL+ for different reformulations, we utilized the Dolan-Moré performance profile\footnote{Performance profile script obtained from \url{https://www.mcs.anl.gov/~more/cops/}.} \cite{dolan2002benchmarking}. More specifically, suppose we benchmark $S$ solvers on $P$ problems, with $t_{i,j}$ denoting the time taken by solver $i$ to solve problem $j$. The performance ratio $r_{i,j}$ for solver $i$ on problem $j$ is defined as:
\[
r_{i,j} = \frac{t_{i,j}}{\min\{t_{l,j} : 1 \leq l \leq S\}}, \quad 1 \leq i \leq S, \; 1 \leq j \leq P.
\]
The performance profile plots the fraction of problems solved by each solver within a factor $\tau$ of the best solver, defined as:
\[
f_i(\tau) = \frac{1}{P} \sum_{1 \leq j \leq P} I_{[0,\tau]}(r_{i,j}), \quad 1 \leq i \leq S, \quad  \tau \in \mathbb{R}_{++},
\]
where $I_{[0,\tau]}$ is the indicator function of the interval $[0,\tau]$.
Thus, $f_i(\tau)$ denotes the fraction of problems solved within $\tau$ times the best solver's time. Higher curves indicate better performance. The performance profiles are shown in Figure \ref{fig-pp}, where the $\operatorname{lg}(\cdot)$ function is base 2. Since SDPNAL+ failed to solve any QKP problems within the time limit, it is excluded from Figure \ref{fig:pp-QKP}. For BIQ problems, only the shortest computation time of SDPNAL+ among the three formulations is reported. 

\begin{figure}[h!]
    \centering
    \begin{subfigure}{0.49\textwidth}
        \centering
        \includegraphics[width=\linewidth]{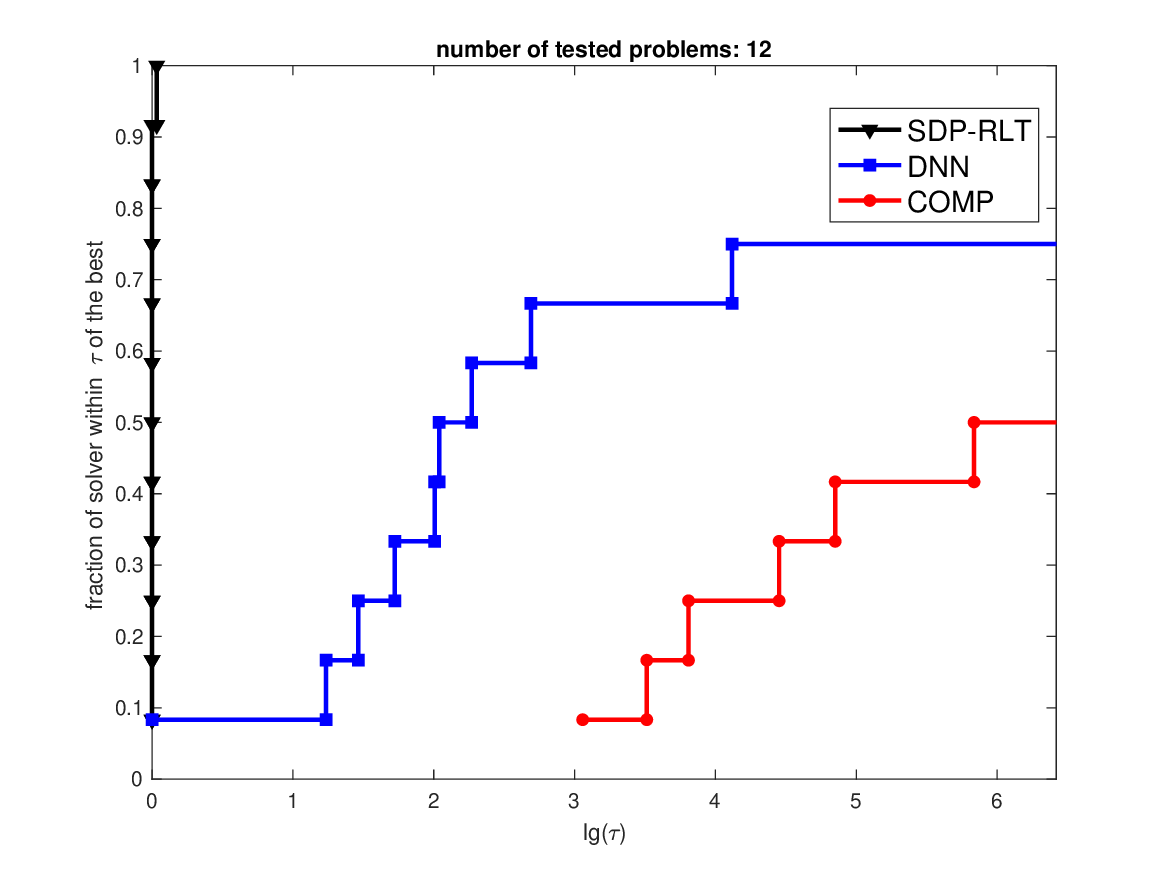}
        \caption{Strengthened QKP problems.}
        \label{fig:pp-QKP}
    \end{subfigure}
    \hfill
    \begin{subfigure}{0.49\textwidth}
        \centering
        \includegraphics[width=\linewidth]{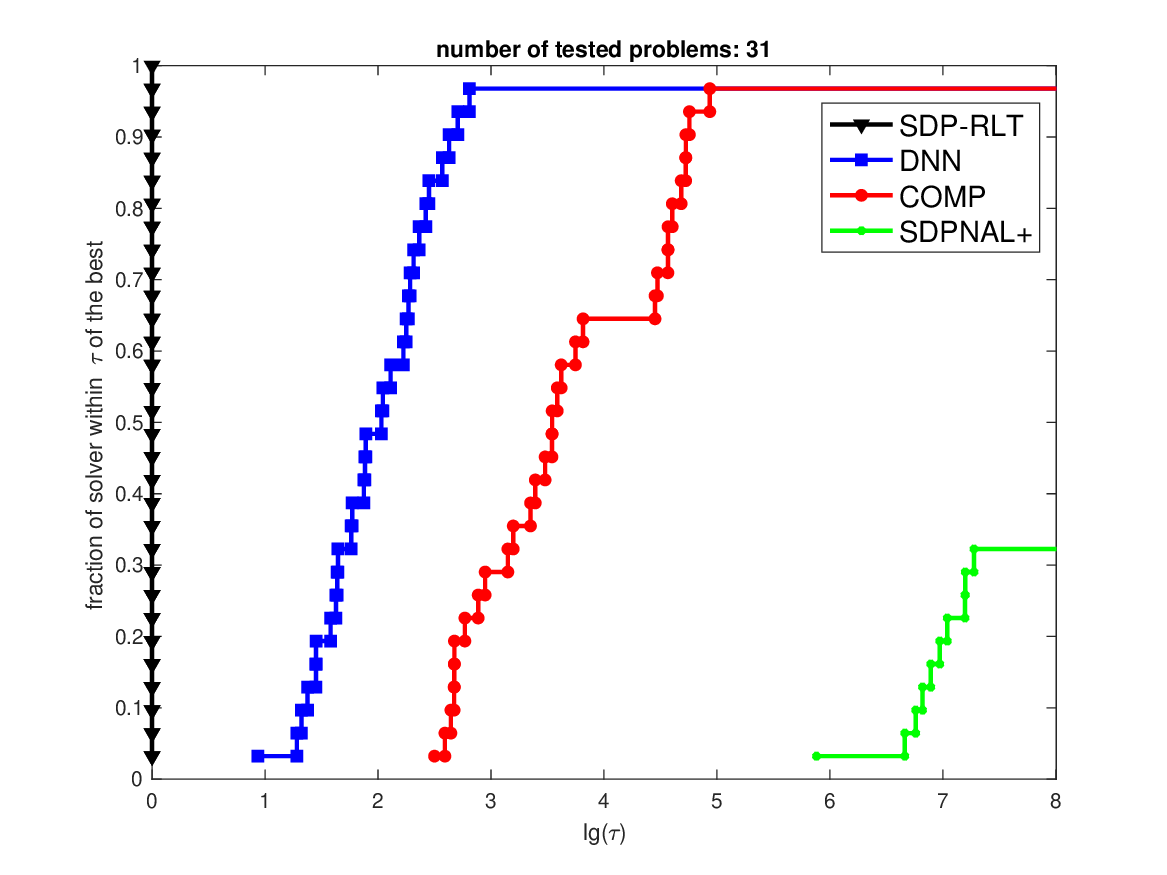}
        \caption{Strengthened BIQ problems.}
        \label{fig:pp-BIQ}
    \end{subfigure}
    \caption{Performance profile comparison for different formulations.}
    \label{fig-pp}
\end{figure}

The results indicate that the performance of RiNNAL+ on \eqref{SDP-RLT} outperforms both \eqref{DNN} and \eqref{COMP}. This advantage is expected, as the dimension of \eqref{SDP-RLT} is only half of that in \eqref{DNN} and \eqref{COMP}, and the corresponding manifold is simpler. For example, in BIQ problems, the manifold $\M_r$ corresponding to \eqref{SDP-RLT} is the oblique manifold defined as
\[
\mathcal{OB}(n, r) := \left\{ R \in \mathbb{R}^{n \times r} : \diag(RR^\top) = e \right\},
\]
which allows for straightforward projection and retraction operations. Consequently, solving \eqref{SDP-RLT} directly is generally more efficient than introducing slack variables for \eqref{DNN} or \eqref{COMP}. 
Furthermore, the results indicate that the speed of RiNNAL+ on solving \eqref{DNN} is faster than \eqref{COMP}. This is also expected, as \eqref{DNN} incorporates the constraint $\diag(X) = x$ directly into the manifold structure, while \eqref{COMP} relies on penalizing its equivalent complementarity constraints to the augmented Lagrangian function. On the one hand, the computational costs for performing retraction in \eqref{DNN} remain manageable due to the efficient convex reformulation technique used to compute the retraction subproblem, as detailed in \cite{RiNNAL}. On the other hand, embedding the constraint directly into the manifold significantly reduces the number of outer ALM and inner Riemannian gradient descent iterations, contributing to the overall speed advantage.

The results also highlight the advantage of RiNNAL+ over RiNNAL. While RiNNAL cannot be directly applied to solve \eqref{SDP-RLT}, it can be used to solve the equivalent problem \eqref{DNN}. However, as shown in Figure~\ref{fig-pp}, solving \eqref{DNN} is less efficient than solving \eqref{SDP-RLT}.

Due to the discussion above, in the following subsections, we only report the performance of RiNNAL+ on the \eqref{SDP-RLT} formulation. For SDPNAL+, we always report the fastest computational results among the three formulations \eqref{SDP-RLT}, \eqref{DNN}, and \eqref{COMP} to ensure a fair comparison.


\subsection{Binary integer nonconvex quadratic programming}\label{subsec-BIQ}

Consider the following strengthened BIQ problem mentioned in subsection \ref{subsec-tightness-numerical}, which is a specific instance of \eqref{SMBQP} without equality constraints:
\begin{equation}\label{eq-BIQ}
\min\left\{x^{\top} Q x+2 c^{\top}x:\ x\leq e,\ x \in\{0,1\}^n\right\}.\tag{BIQ-S}
\end{equation}
We select the test data for $Q$ and $c$ from the ORLIB library mentioned in subsection \ref{subsec-tightness-numerical} and consider problems of dimension $n \in \{500, 1000, 2500\}$. Each dimension includes ten instances, but we only report results for the first one in this subsection, as the performance across the others is similar. Complete results for all instances can be found in Appendix \ref{appendix-BIQ}. Since no data for larger dimensions is available, we randomly generate a dataset with the problem size $n = 5000$, following the data generation procedure outlined by J.E. Beasley in \cite{beasley1998heuristic}.

\begin{footnotesize}
\begin{longtable}[c]{crrrccrrr}
\caption{Computational results for \eqref{SDP-RLT} relaxation of \eqref{eq-BIQ} problems. \label{tab-BIQ}} \\
\toprule
Problem&Algorithm&Iteration&Rank& \multicolumn{1}{c}{$\operatorname{R_{max}}$}&Objective&\multicolumn{1}{c}{Time}&\multicolumn{1}{c}{TPG}\\
\midrule
\endfirsthead

\multicolumn{9}{c}%
{{ Table \thetable\ continued from previous page}} \\
\toprule
Problem&Algorithm&Iteration&Rank& \multicolumn{1}{c}{$\operatorname{R_{max}}$}&Objective&\multicolumn{1}{c}{Time}&\multicolumn{1}{c}{TPG}\\
\midrule
\endhead
\midrule
\multicolumn{9}{r}{{Continued on next page}} \\
\midrule
\endfoot

\bottomrule
\endlastfoot

$n=500$& RiNNAL+ & 16, 850, 3 & 50 & 8.70e-07 & -1.2259545e+05 & 8.0 &0.9 \\ 
 & SDPNAL+ & 44, 97, 1535 & 76& 9.98e-07 & -1.2259531e+05 & 613.1 & - \\ [3pt] 

$n=1000$& RiNNAL+ & 12, 650, 3 & 79 & 9.58e-07 & -3.8983061e+05 & 19.3 &2.3 \\ 
 & SDPNAL+ & 530, 549, 5574 & 995& 3.83e-02$^{\dagger}$ & -4.1472242e+05 & 3600.0 & - \\ [3pt] 

$n=2500$& RiNNAL+ & 11, 600, 2 & 134 & 7.28e-07 & -1.6096512e+06 & 133.3 &15.9 \\ 
 & SDPNAL+ & - & - & - & - & - & - \\ [3pt]

$n=5000$& RiNNAL+ & 12, 650, 3 & 177 & 8.61e-07 & -4.6838013e+06 & 1103.1 &254.8 \\ 
 & SDPNAL+ & - & - & - & - & - & - \\ 

\end{longtable}
\end{footnotesize}

As shown in Table \ref{tab-BIQ}, RiNNAL+ successfully solves all instances problems to the required accuracy, whereas SDPNAL+ fails to solve problems with dimensions $n \geq 1000$ within the 1-hour limit. For medium-size problems, RiNNAL+ can be 180 times faster than SDPNAL+. Moreover, RiNNAL+ is capable of handling problems with dimensions as large as $n = 5000$ in about 18 minutes. These results highlight the efficiency of RiNNAL+ and its potential for solving large-scale BIQ problems. It is also noteworthy that the solution ranks are typically around 50-200, which are not particularly small. Furthermore, the computational cost of the PG step constitutes only a small fraction of the total time, owing to the preprocessing and warm-start techniques proposed in Section~\ref{sec-acceleration}.


\subsection{Maximum stable set problems}\label{subsec-theta}

Consider a graph $G$ with $n$ vertices and $m$ edges, where the edge set is denoted by $E \subseteq \{(i,j) \mid 1 \leq i < j \leq n\}$. The maximum stable set problem ($\theta_+$) is defined as follows:
\begin{equation}\label{prob-theta-v1}
\max\left\{x^{\top} x:\  x\leq e,\ x_ix_j=0\,\, \forall (i,j)\in E,\  x \in\{0,1\}^n\right\}.
\tag{$\theta_+$}
\end{equation}
We choose large sparse graphs from the Gset dataset\footnote{Dataset available at \url{https://web.stanford.edu/~yyye/yyye/Gset/}.} and coding theory\footnote{Dataset available at \url{https://oeis.org/A265032/a265032.html}.}. In this subsection, we present several representative graphs with different dimensions, while the complete results for all graphs are provided in Appendix \ref{appendix-theta}. 

\begin{footnotesize}
\begin{longtable}[c]{crrrccrrr}
\caption{Computational results for \eqref{SDP-RLT} relaxation of \eqref{prob-theta-v1} problems. \label{tab-theta}} \\
\toprule
Problem&Algorithm&Iteration&Rank& \multicolumn{1}{c}{$\operatorname{R_{max}}$}&Objective&\multicolumn{1}{c}{Time}&\multicolumn{1}{c}{TPG}\\
\midrule
\endfirsthead

\multicolumn{9}{c}%
{{ Table \thetable\ continued from previous page}} \\
\toprule
Problem&Algorithm&Iteration&Rank& \multicolumn{1}{c}{$\operatorname{R_{max}}$}&Objective&\multicolumn{1}{c}{Time}&\multicolumn{1}{c}{TPG}\\
\midrule
\endhead
\midrule
\multicolumn{9}{r}{{Continued on next page}} \\
\midrule
\endfoot

\bottomrule
\endlastfoot

G11& RiNNAL+ & 56, 2850, 11 & 8 & 6.41e-07 & -4.0000014e+02 & 47.7 &6.4 \\ 
$n=800$ & SDPNAL+ & 482, 722, 9358 & 2& 2.46e-07 & -4.0000005e+02 & 3008.3 & - \\ [3pt] 

G18& RiNNAL+ & 34, 1750, 7 & 75 & 9.08e-07 & -2.7900052e+02 & 34.0 &5.5 \\ 
$n=800$ & SDPNAL+ & 118, 558, 3700 & 69& 2.38e-05$^{\dagger}$ & -2.7899998e+02 & 3600.0 & - \\ [3pt] 

G54& RiNNAL+ & 46, 2350, 10 & 168 & 9.01e-07 & -3.4100141e+02 & 68.4 &11.3 \\ 
$n=1000$ & SDPNAL+ & 100, 372, 2500 & 110& 1.93e-04$^{\dagger}$ & -3.4090921e+02 & 3600.0 & - \\ [3pt] 

G30& RiNNAL+ & 13, 700, 3 & 98 & 9.00e-07 & -5.7703872e+02 & 87.7 &10.4 \\ 
$n=2000$ & SDPNAL+ & - & - & - & - & - & - \\ [3pt] 

G50& RiNNAL+ & 24, 1250, 5 & 134 & 9.39e-07 & -1.4940617e+03 & 754.8 &149.3 \\ 
$n=3000$ & SDPNAL+ & - & - & - & - & - & - \\ [3pt] 

1tc.1024& RiNNAL+ & 116, 5850, 24 & 288 & 9.38e-07 & -2.0420520e+02 & 192.4 &28.1 \\ 
$n=1024$ & SDPNAL+ & 100, 300, 3430 & 339& 9.43e-06$^{\dagger}$ & -2.0419958e+02 & 3600.0 & - \\ [3pt] 

1tc.2048& RiNNAL+ & 94, 4750, 26 & 520 & 9.85e-07 & -3.7049061e+02 & 820.7 &162.8 \\ 
$n=2048$ & SDPNAL+ & - & - & - & - & - & - \\ [3pt] 

\end{longtable}
\end{footnotesize}

As shown in Table \ref{tab-theta}, RiNNAL+ successfully solves all instances of problem \eqref{SDP-RLT} to the required accuracy, whereas SDPNAL+ is unable to solve Gset instances with dimensions $n \geq 1000$ and coding theory instances within the 1-hour time limit. For the first four problems, RiNNAL+ is over 40 times faster than SDPNAL+. For the second instance, RiNNAL+ is at least 100 times faster than SDPNAL+. Observe that the solutions of the last two instances have relatively high ranks (approximately $n/4$ to $n/3$), yet RiNNAL+ remains about 100 times faster than SDPNAL+. These results underscore RiNNAL+’s robustness in solving general SDP relaxations—even in high-rank scenarios.


\subsection{Quadratic knapsack problems}\label{subsec-QKP}

The quadratic knapsack problem (QKP), introduced by Gallo et al. in \cite{gallo1980quadratic}, is formulated as follows:
\begin{equation}\label{prob-qkp-v1}
\max\left\{x^{\top} Q x:\ a^\top x\leq \tau,\ x\leq e,\ x \in\{0,1\}^n\right\},
\end{equation}
where $Q\in \S^n$ is a nonnegative profit matrix, $a\in \R^n_{++}$ is the weight vector, and $\tau>0$ is the knapsack capacity. To consider both equality and inequality constraints, we convert the inequality constraint into an equality constraint, leading to the modified problem:
\begin{equation}\label{prob-qkp-v2}
\max\left\{x^{\top} Q x:\ a^\top x= \tau,\ x\leq e,\ x \in\{0,1\}^n\right\}.\tag{QKP-S}
\end{equation}
The new problem \eqref{prob-qkp-v2} provides a lower bound for Problem \eqref{prob-qkp-v1}. When $a=e$ and $\tau=k$, \eqref{prob-qkp-v2} reduces to the $k$-subgraph problem. We randomly generate the profit matrix $Q$ and weight vector $a$ following the procedure proposed by Gallo et al. in \cite{gallo1980quadratic}, which has been widely adopted in the literature (see, for example, \cite{caprara1999exact,billionnet2004exact,pisinger2007quadratic,tang2024feasible}). The entries of the profit matrix $Q_{ij} = Q_{ji}$ are randomly generated as integers uniformly distributed in the range $[1, 100]$ with probability $p$, and zero otherwise. The elements of the weight vector $a$ are randomly selected integers in the range $[1, 50]$. The knapsack capacity is set to $0.9 \cdot e^\top a$, and the probability $p$ is chosen from $\{0.1, 0.5, 0.9\}$. The problem dimensions $n$ are selected from $\{500, 1000, 2000, 5000\}$.
Results for SDPNAL+ are not reported, as it fails to achieve the required accuracy within the 1-hour time limit, even for the smallest problem with $n = 500$.

\begin{footnotesize}
\begin{longtable}[c]{rrrccrrr}
\caption{Computational results for \eqref{SDP-RLT} relaxation of \eqref{prob-qkp-v2} problems. \label{tab-QKP}} \\
\toprule
\multicolumn{1}{c}{$n,p$}&Iteration&Rank& \multicolumn{1}{c}{$\operatorname{R_{max}}$}&Objective&\multicolumn{1}{c}{Time}&\multicolumn{1}{c}{TPG}\\
\midrule
\endfirsthead

\multicolumn{8}{c}%
{{ Table \thetable\ continued from previous page}} \\
\toprule
$n,p$&Iteration&Rank& \multicolumn{1}{c}{$\operatorname{R_{max}}$}&Objective&\multicolumn{1}{c}{Time}&\multicolumn{1}{c}{TPG}\\
\midrule
\endhead
\midrule
\multicolumn{8}{r}{{Continued on next page}} \\
\midrule
\endfoot

\bottomrule
\endlastfoot

500, 0.1& 87, 4400, 18 & 26 & 9.91e-07 & -1.1420448e+06 & 40.5 &4.3 \\ [1pt]
500, 0.5& 20, 1050, 4 & 16 & 8.36e-07 & -5.6675029e+06 & 9.3 &0.9 \\ [1pt]
500, 0.9& 41, 2100, 8 & 13 & 7.11e-07 & -1.0260964e+07 & 18.3 &2.0 \\ [1pt]
1000, 0.1& 27, 1400, 6 & 37 & 9.29e-07 & -4.5986965e+06 & 53.1 &5.5 \\ [1pt]
1000, 0.5& 31, 1600, 6 & 21 & 9.74e-07 & -2.2747042e+07 & 53.3 &6.0 \\ [1pt]
1000, 0.9& 16, 850, 3 & 11 & 8.86e-07 & -4.0934623e+07 & 28.8 &3.3 \\ [1pt]
2000, 0.1& 15, 800, 3 & 42 & 6.84e-07 & -1.8316334e+07 & 123.7 &22.7 \\ [1pt]
2000, 0.5& 17, 900, 4 & 22 & 9.28e-07 & -9.0934882e+07 & 140.9 &28.1 \\ [1pt]
2000, 0.9& 24, 1250, 5 & 29 & 5.08e-07 & -1.6341805e+08 & 221.8 &45.8 \\ [1pt]
5000, 0.1& 18, 950, 4 & 77 & 7.89e-07 & -1.1399134e+08 & 1761.8 &517.4 \\ [1pt]
5000, 0.5& 14, 750, 3 & 36 & 9.66e-07 & -5.6821246e+08 & 1429.3 &468.0 \\ [1pt]
5000, 0.9& 17, 900, 4 & 20 & 4.45e-07 & -1.0223365e+09 & 2005.0 &779.0 \\

\end{longtable}
\end{footnotesize}

As shown in Table \ref{tab-QKP}, RiNNAL+ successfully solves all instances. In some cases, such as for $n=500$ and $p=0.5$, RiNNAL+ is about 400 times faster than SDPNAL+. Furthermore, RiNNAL+ can handle large QKP problems with $n=5000$ in approximately 30 minutes. Note that in the low-rank phase of RiNNAL+, computing the tangent space projection and retraction onto the manifold $\M_r$ for QKP problems is generally nontrivial and can be expensive. However, by applying the strategies described in Subsection \ref{subsec-LR-phase}, the time required for these operations is reduced to approximately $10\%$ to $20\%$ of the total computation time. This small fraction highlights the efficiency of our approach in performing projection and retraction.


\subsection{Cardinality-constrained minimum sum-of-squares clustering}\label{subsec-ccMSCC}

The cardinality-constrained minimum sum-of-squares clustering problem (ccMSSC), as discussed in \cite{piccialli2023global}, aims to partition $m$ data points $p_1, \ldots, p_m \in \mathbb{R}^d$, into $k$ clusters with predetermined sizes $c_1, \ldots, c_k$ such that $\sum_{j=1}^k c_j = m$. The objective is to minimize the total sum of squared intra-cluster distances. This problem can be formulated as:
\begin{equation}\label{prob-ccMSCC-v1}
\min\left\{\sum_{j=1}^{k}\frac{1}{c_j} \sum_{s=1}^m\sum_{t=1}^m d_{st} \pi^{(s)}_j\pi_j^{(t)}:\ 
\pi\in \mathcal{S},\; \pi\leq e_{m\times k},\; \pi \in \{0,1\}^{m\times k} \right\},
\tag{ccMSSC}
\end{equation}
where $e_{m\times k}\in\R^{m\times k}$ is a matrix of all ones, and $d_{st}=\|p_s - p_t\|^2$. The variable $\pi = [\pi^{(1)}, \ldots, \pi^{(k)}]$, where each $\pi^{(j)} = [\pi^{(j)}_1, \ldots, \pi^{(j)}_m]^\top$ is the indicator vector for cluster $j$. The set $\mathcal{S}$ is defined as:
\begin{equation*}
\mathcal{S}:=\left\{\pi\in\R^{m\times k}:\  
\sum_{j=1}^k \pi^{(j)}_i= 1,\ \forall i\in[m],\ 
\sum_{i=1}^m \pi^{(j)}_i= c_j,\ \forall j\in[k] \right\}.
\end{equation*}
Additionally, we impose the constraint $\pi^{(j)}_i \leq 1$ for all $i \in [m]$ and $j \in [k]$ to \eqref{prob-ccMSCC-v1} 
to strengthen the corresponding \eqref{SDP-RLT} relaxation
as outlined in Example \ref{example-SMBQP}. Table \ref{tab-data-ccMSSC} presents 28 real-world classification datasets from the UCR2 website\footnote{Dataset available at \url{https://www.cs.ucr.edu/~eamonn/time_series_data_2018/}.}, characterized by the number of data points $m \in [180, 1370]$, the number of features $d \in [24, 2709]$, and the number of clusters $k = \{2,3,4\}$. We define $n = mk$ to denote the dimension of \eqref{MBQP}. Since generating the constraint matrix for SDPNAL+ is time-consuming for large $n$, we limit SDPNAL+ to solving relaxation problems of \eqref{MBQP} for $n \leq 1000$. Results for datasets with $\text{ID} \in\{5, 10, 15, 20, 25\}$ are presented in this subsection, and complete results for all datasets can be found in Appendix \ref{appendix-ccMSSC}.

\begin{small}
\begin{longtable}[c]{clrrrcr}
\caption{Real-world classification instances.} \label{tab-data-ccMSSC}\\
\toprule
 ID  & Dataset & $n$ & $m$ & $d$ & $k$ & $c_1,\dots,c_k$  \\
\midrule
\endfirsthead

\multicolumn{7}{c}%
{{ Table \thetable\ continued from previous page}} \\
\toprule
  ID  & Dataset & n & m & d & k & $c_1,\dots,c_k$  \\
\midrule
\endhead
\midrule
\multicolumn{7}{r}{{Continued on next page}} \\
\midrule
\endfoot
\bottomrule
\endlastfoot

01 & WormsTwoClass & 516 & 258 & 900 & 2 & 109,  149 \\ 
02 & ToeSegmentation1 & 536 & 268 & 277 & 2 & 140,  128 \\ 
03 & BME & 540 & 180 & 128 & 3 & 60,  60,  60 \\ 
04 & UMD & 540 & 180 & 150 & 3 & 60,  60,  60 \\ 
05 & PowerCons & 720 & 360 & 144 & 2 & 180,  180 \\ 
06 & Chinatown & 726 & 363 & 24 & 2 & 104,  259 \\ 
07 & InsectEPGSmallTrain & 798 & 266 & 601 & 3 & 95,  126,  45 \\ 
08 & Trace & 800 & 200 & 275 & 4 & 50,  50,  50,  50 \\ 
09 & GunPointAgeSpan & 902 & 451 & 150 & 2 & 228,  223 \\ 
10 & GunPointMaleVersusFemale & 902 & 451 & 150 & 2 & 237,  214 \\ 
11 & GunPointOldVersusYoung & 902 & 451 & 150 & 2 & 215,  236 \\ 
12 & Earthquakes & 922 & 461 & 512 & 2 & 368,  93 \\ 
13 & InsectEPGRegularTrain & 933 & 311 & 601 & 3 & 111,  148,  52 \\ 
14 & Computers & 1000 & 500 & 720 & 2 & 250,  250 \\ 
15 & MiddlePhalanxOutlineAgeGroup & 1662 & 554 & 80 & 3 & 92,  196,  266 \\ 
16 & DistalPhalanxOutlineCorrect & 1752 & 876 & 80 & 2 & 337,  539 \\ 
17 & ECGFiveDays & 1768 & 884 & 136 & 2 & 442,  442 \\ 
18 & MiddlePhalanxOutlineCorrect & 1782 & 891 & 80 & 2 & 337,  554 \\ 
19 & ProximalPhalanxOutlineCorrect & 1782 & 891 & 80 & 2 & 286,  605 \\ 
20 & SemgHandGenderCh2 & 1800 & 900 & 1500 & 2 & 540,  360 \\ 
21 & ProximalPhalanxOutlineAgeGroup & 1815 & 605 & 80 & 3 & 89,  227,  289 \\ 
22 & SonyAIBORobotSurface2 & 1960 & 980 & 65 & 2 & 376,  604 \\ 
23 & Strawberry & 1966 & 983 & 235 & 2 & 351,  632 \\ 
24 & LargeKitchenAppliances & 2250 & 750 & 720 & 3 & 250,  250,  250 \\ 
25 & RefrigerationDevices & 2250 & 750 & 720 & 3 & 250,  250,  250 \\ 
26 & SmallKitchenAppliances & 2250 & 750 & 720 & 3 & 250,  250,  250 \\ 
27 & TwoLeadECG & 2324 & 1162 & 82 & 2 & 581,  581 \\ 
28 & HandOutlines & 2740 & 1370 & 2709 & 2 & 495,  875 \\ 

\end{longtable}
\end{small}

\begin{footnotesize}
\begin{longtable}[c]{crrrccrrr}
\caption{Computational results for \eqref{SDP-RLT} relaxation of \eqref{prob-ccMSCC-v1} problem.} \label{tab-ccMSSC}\\
\toprule
Problem&Algorithm&Iteration&Rank& \multicolumn{1}{c}{$\operatorname{R_{max}}$}&Objective&\multicolumn{1}{c}{Time}&\multicolumn{1}{c}{TPG}\\
\midrule
\endfirsthead

\multicolumn{9}{c}%
{{ Table \thetable\ continued from previous page}} \\
\toprule
Problem&Algorithm&Iteration&Rank& \multicolumn{1}{c}{$\operatorname{R_{max}}$}&Objective&\multicolumn{1}{c}{Time}&\multicolumn{1}{c}{TPG}\\
\midrule
\endhead
\midrule
\multicolumn{9}{r}{{Continued on next page}} \\
\midrule
\endfoot

\bottomrule
\endlastfoot

5& RiNNAL+ & 9, 500, 2 & 5 & 9.27e-07 & 7.3254609e+04 & 14.3 &1.4 \\ 
$n=720$& SDPNAL+ & 201, 234, 10487 & 17& 8.09e-06$^{\dagger}$ & 7.3254549e+04 & 3600.0 & - \\ [3pt] 

10& RiNNAL+ & 3, 200, 1 & 7 & 8.78e-07 & 4.0536895e+09 & 12.6 &1.1 \\ 
$n=902$& SDPNAL+ & 54, 246, 2312 & 1& 7.91e-07 & 4.0536886e+09 & 2313.4 & - \\ [3pt] 

15& RiNNAL+ & 12, 511, 3 & 21 & 3.90e-07 & 8.5151301e+02 & 1500.3 &193.9 \\ 
$n=1662$& SDPNAL+ & - & - & - & - & - & - \\ [3pt] 

20& RiNNAL+ & 2, 150, 1 & 28 & 5.87e-07 & 7.3610392e+08 & 42.2 &13.7 \\ 
$n=1800$& SDPNAL+ & - & - & - & - & - & - \\ [3pt] 

25& RiNNAL+ & 51, 2600, 11 & 61 & 5.42e-07 & 1.0500893e+06 & 2215.9 &170.8 \\ 
$n=2250$& SDPNAL+ & - & - & - & - & - & - \\ [3pt] 

\end{longtable}
\end{footnotesize}

As shown in Table \ref{tab-ccMSSC}, RiNNAL+ outperforms SDPNAL+ on all instances that the latter can solve with $n\leq 1000$. Notably, for the first instance, RiNNAL+ is 350 times faster than SDPNAL+. Additionally, the number of equality constraints in \eqref{prob-ccMSCC-v1} is $k+m$, which is relatively large, making the structure of $\M_r$ more complex. Despite this, our approach efficiently solves all instances within the one-hour time limit.


\subsection{Sparse standard quadratic programming problems}\label{subsec-SStQP}

The sparse standard quadratic programming problem considered in \cite{bomze2024tighter} is given as follows:
\begin{equation}\label{prob-sstqp-v1}
\min\left\{x^{\top} Q x:\ e^\top x= 1,\ \|x\|_0\leq \rho,\ x\in\R^m_+\right\},
\end{equation}
where $Q\in \S^m$, $\rho \in [m]$ is the sparsity parameter. As introduced in Example \ref{example-L0}, there are two DNN relaxations of \eqref{prob-sstqp-v1}. The first one uses the Big-M reformulation to convert \eqref{prob-sstqp-v1} into the following equivalent problem:
\begin{equation}\label{prob-sstqp-v2}
\min\left\{x^{\top} Q x:\ e^\top x= 1,\ e^\top u=\rho,\ x\leq u\leq e,\ x\in\R_+^m,\ u \in\{0,1\}^m\right\}.
\tag{SStQP}
\end{equation}
The second one uses the complementarity reformulation and considers the following equivalent problem:
\begin{equation}\label{prob-sstqp-v3}
\min\left\{x^{\top} Q x:\  x^\top v=0,\ e^\top v=m-\rho,\ x\leq e,\ v\geq 0,\ v\in \{0,1\}^m \right\}.
\end{equation}
Although \eqref{prob-sstqp-v3} includes the nonlinear constraint $x^\top v = 0$ and does not fit the structure of \eqref{MBQP}, as noted in Remark \ref{remark-QCQP}, our algorithm can readily be extended to handle the SDP relaxations of a QCQP including \eqref{prob-sstqp-v3}. 
We randomly generate the matrix $Q$ and the sparsity parameter $\rho$ following the method proposed by Bomze et al. in \cite{bomze2024tighter}. Specifically, the matrix $Q$ is generated using three different approaches, referred to as COP, PSD, and SPN. For each approach, the data are generated so that the standard quadratic optimization problem (i.e., problem \eqref{prob-sstqp-v1} without the sparsity constraint) admits a unique global minimizer with support size corresponding to a target sparsity level $\rho_0 = m/4$.  
To evaluate algorithm performance under stricter sparsity constraints, we set the actual sparsity parameter to $\rho = \rho_0 / 2$. The problem dimension $m$ is chosen from the set $\{100, 200, 500\}$.
We use $n = 2m$ to denote the variable dimension of \eqref{MBQP}. We apply RiNNAL+ and SDPNAL+ to solve the \eqref{SDP-RLT} relaxation of \eqref{prob-sstqp-v2}. The results for the \eqref{SDP-RLT} relaxation of \eqref{prob-sstqp-v3} are not reported, as it yields the same bound as that for \eqref{prob-sstqp-v2} but requires more computational time. 
Additionally, we exclude the result for the SPN instance with dimension $n=1000$ since neither RiNNAL+ nor SDPNAL+ could solve it within the one-hour time limit.

\begin{footnotesize}
\begin{longtable}[c]{crrrccrrr}
\caption{Computational results for \eqref{SDP-RLT} relaxation of \eqref{prob-sstqp-v2} problems. \label{tab-sStQP}} \\
\toprule
Problem&Algorithm&Iteration&Rank& \multicolumn{1}{c}{$\operatorname{R_{max}}$}&Objective&\multicolumn{1}{c}{Time}&\multicolumn{1}{c}{TPG}\\
\midrule
\endfirsthead

\multicolumn{9}{c}%
{{ Table \thetable\ continued from previous page}} \\
\toprule
Problem&Algorithm&Iteration&Rank& \multicolumn{1}{c}{$\operatorname{R_{max}}$}&Objective&\multicolumn{1}{c}{Time}&\multicolumn{1}{c}{TPG}\\
\midrule
\endhead
\midrule
\multicolumn{9}{r}{{Continued on next page}} \\
\midrule
\endfoot

\bottomrule
\endlastfoot

COP& RiNNAL+ & 125, 6300, 25 & 102 & 6.64e-07 & -1.0190589e-01 & 30.7 &2.1 \\ 
$n=200$ & SDPNAL+ & 76, 112, 1170 & 102& 8.59e-07 & -1.0196083e-01 & 40.1 & - \\ [3pt] 

PSD& RiNNAL+ & 374, 18750, 75 & 5 & 8.59e-07 & 1.4174794e-02 & 80.7 &14.5 \\ 
$n=200$ & SDPNAL+ & 801, 1058, 17306 & 2& 8.50e-07 & 1.4174999e-02 & 468.1 & - \\ [3pt] 

SPN& RiNNAL+ & 747, 37400, 150 & 107 & 9.31e-07 & 1.3592384e-02 & 204.0 &20.8 \\ 
$n=200$ & SDPNAL+ & 801, 1032, 29187 & 12& 9.94e-07 & 1.4887842e-02 & 866.1 & - \\ [3pt] 

COP& RiNNAL+ & 300, 15050, 60 & 202 & 9.99e-07 & -1.0402158e-01 & 230.5 &15.8 \\ 
$n=400$ & SDPNAL+ & 103, 145, 1511 & 202& 9.24e-07 & -1.0445084e-01 & 170.7 & - \\ [3pt] 

PSD& RiNNAL+ & 360, 18050, 72 & 108 & 9.50e-07 & 9.0349366e-03 & 231.0 &19.4 \\ 
$n=400$ & SDPNAL+ & 801, 905, 36057 & 89& 1.84e-06$^{\dagger}$ & 9.3529559e-03 & 3600.0 & - \\ [3pt] 

SPN& RiNNAL+ & 1812, 90650, 363 & 248 & 9.97e-07 & -1.2530973e-02 & 1556.1 &145.7 \\ 
$n=400$ & SDPNAL+ & 801, 896, 38599 & 25& 1.02e-06$^{\dagger}$ & 1.3190619e-02 & 3600.0 & - \\ [3pt] 

COP& RiNNAL+ & 672, 33650, 135 & 621 & 1.20e-04$^{\dagger}$ & -4.8722936e-01 & 3600.0 &323.1 \\ 
$n=1000$ & SDPNAL+ & 200, 227, 2350 & 502& 9.28e-07 & -1.0466603e-01 & 1779.7 & - \\ [3pt] 

PSD& RiNNAL+ & 507, 25400, 102 & 234 & 9.88e-07 & 2.9188026e-03 & 1838.0 &152.6 \\ 
$n=1000$ & SDPNAL+ & 426, 430, 4726 & 160& 7.73e-06$^{\dagger}$ & 1.0912535e-02 & 3600.0 & - \\ [3pt] 

\end{longtable}
\end{footnotesize}

As shown in Table \ref{tab-sStQP},
RiNNAL+ consistently outperforms SDPNAL+ across all instances, except for the COP instance. This is expected because the solution rank of the ALM subproblem arising from the \eqref{SDP-RLT} relaxation for this type of sparse StQP problem is very high, nearly $n/2$. However, for problems with medium solution rank, such as the SPN and PSD instances, RiNNAL+ can still be faster than SDPNAL+—about 4 times faster for the SPN instance with $n=400$ and 15 times faster for the PSD instance with $n=200$. These results further highlight RiNNAL+'s effectiveness in solving \eqref{SDP-RLT}. Note that the objective function values obtained by different algorithms vary noticeably in some instances. This anomaly, rare among other problem classes, may be attributed to the unique properties of sparse QP problems. In our experiments, we observed that the objective function value stabilizes only when the KKT residual is below $10^{-8}$.


\subsection{Quadratic minimum spanning tree problem}\label{subsec-QMSTP}

The Quadratic Minimum Spanning Tree Problem (QMSTP) introduced by Assad and Xu \cite{assad1992quadratic} aims to find the minimizer of a quadratic function over all possible spanning trees of a graph. For a connected, undirected graph $G=(V,E)$ with $n=|V|$ vertices and $m=|E|$ edges, let $Q = (q_{ef}) \in \S^{m}$ denotes a matrix of interaction costs between the edges of $G$, where each $q_{ee}$ denotes the cost associated with edge $e$. Then QMSTP can be formulated as follows:
\begin{equation}\label{eq-QMSTP-org}
    \min\left\{ x^\top Qx:  x \in \mathcal{T}\right\},
\end{equation}
where $\mathcal{T}$ is the collection of all spanning trees in the graph $G$ defined by
\[
\mathcal{T} \coloneqq \left\{ x \in \{0,1\}^{m} : \sum_{e\in E} x_e = n-1, \, \sum_{e \in \partial S} x_e \geq 1, \, \forall S \subsetneq V, \, S \neq \emptyset \right\},
\]
and  $\partial S \coloneqq \big\{ \{i,j\} \in E ~ : ~ i \in S, j\notin S \big\}$ denotes the cut induced by $S$. 
The constraints
\[
\sum_{e \in \partial S} x_e \geq 1
\]
are referred to as cut-set constraints, ensuring that each subset $S$ connects to the remainder of the graph, thereby maintaining the connectivity of any subgraph in $\mathcal{T}$. If the matrix $Q$ is diagonal, the QMSTP simplifies to the classical minimum spanning tree problem, which is solvable in polynomial time~\cite{kruskal1956shortest,prim1957shortest}. 

Although \eqref{eq-QMSTP-org} follows the structure of \eqref{MBQP}, handling the $2^{n-1}$ cut-set constraints in $\mathcal{T}$ is computationally prohibitive. To reduce the number of constraints, Meijer et al. \cite{de2024spanning} proposed to consider only a subset of the cut-set constraints where $|S|=1$. This simplification leads to the following relaxed feasible set:
\[
\mathcal{T}' \coloneqq \left\{ x \in \{0,1\}^{m} : x\leq e,\ \sum_{e\in E} x_e = n-1, \, \sum_{e \in \partial S} x_e \geq 1, \, \forall S \subsetneq V, \,  |S|=1 \right\},
\]
which only has $n$ cut-set constraints. The corresponding relaxed QMSTP problem is
\begin{equation}\label{eq-QMSTP-relax}
    \min\left\{ x^\top Qx:  x \in \mathcal{T}'\right\}.
    \tag{QMSTP}
\end{equation}
Since \eqref{eq-QMSTP-relax} also conforms to the structure of \eqref{MBQP}, we can derive its \eqref{SDP-RLT} relaxation. It is worth noting that \cite{de2024spanning} introduced two DNN relaxations of \eqref{eq-QMSTP-relax}, one without and one with some RLT-type constraints, which can be viewed as partial SDP-RLT constraints. Consequently, the SDP-RLT relaxation of \eqref{eq-QMSTP-relax} proposed in this paper is stronger than those presented in \cite{de2024spanning}. Numerical experiments further demonstrate that our \eqref{SDP-RLT} relaxation can be strictly tighter than the DNN relaxation in \cite{de2024spanning}.

We test our algorithm on the OP benchmark set from the Mendeley data website\footnote{Dataset available at \url{https://data.mendeley.com/datasets/cmnh9xc6wb/1}.}, which was introduced by \"Oncan and Punnen \cite{oncan2010quadratic}. The dataset includes three different classes of complete graph instances: OPsym, OPvsym, and OPesym. The generation procedure is as follows:
\begin{enumerate}
    \item OPsym instances: the diagonal entries are chosen uniformly from $[100] := \{1,2,\ldots,100\}$, while the off-diagonal values are independently sampled uniformly from $[20]$.
    \item OPvsym instances: the diagonal entries are uniformly drawn from $[10,000]$. The off-diagonal elements $Q_{\{i,j\},\{k,l\}}$ are computed as $w(i) w(j) w(k) w(l)$, where the function $w \colon V \to [10]$ assigns a random weight uniformly from $[10]$ to each vertex in the graph.
    \item OPesym instances: consider random vertex coordinates within the box $[0,100] \times [0,100]$, with edges defined as straight-line segments connecting vertices. The cost for each edge $Q_{ee}$ is set to the edge length, while the interaction cost between two edges $e$ and $f$ is calculated as the Euclidean distance between their midpoints.
\end{enumerate}
For each of these instance types, we choose 10 random instances with $n \in \{30, 50\}$. We apply SDPNAL+ to solve the relaxation \eqref{SDP-RLT} of \eqref{eq-QMSTP-relax}. 
The average results for instances of the same dimension and data type are presented in this subsection, and complete results for all instances can be found in Appendix \ref{appendix-QMSTP}.

\begin{footnotesize}
\begin{longtable}[c]{crrrccrrr}
\caption{Computational results for \eqref{SDP-RLT} relaxation of \eqref{eq-QMSTP-relax} problems. \label{tab-QMSTP}} \\
\toprule
Problem&Algorithm&Iteration&Rank& \multicolumn{1}{c}{$\operatorname{R_{max}}$}&Objective&\multicolumn{1}{c}{Time}&\multicolumn{1}{c}{TPG}\\
\midrule
\endfirsthead

\multicolumn{9}{c}%
{{ Table \thetable\ continued from previous page}} \\
\toprule
Problem&Algorithm&Iteration&Rank& \multicolumn{1}{c}{$\operatorname{R_{max}}$}&Objective&\multicolumn{1}{c}{Time}&\multicolumn{1}{c}{TPG}\\
\midrule
\endhead
\midrule
\multicolumn{9}{r}{{Continued on next page}} \\
\midrule
\endfoot

\bottomrule
\endlastfoot

vsym& RiNNAL+ & 29, 1480, 6 & 5 & 4.86e-07 & 7.7804931e+04 & 12.9 &1.7 \\ 
$n=435$ & SDPNAL+ & 148, 230, 3713 & 1& 2.91e-07 & 7.7804694e+04 & 299.7 & - \\ [3pt] 

sym& RiNNAL+ & 103, 5190, 21 & 197 & 7.70e-07 & 5.3262719e+03 & 99.3 &2.8 \\ 
$n=435$ & SDPNAL+ & 96, 109, 1486 & 195& 1.09e-06 & 5.3262716e+03 & 130.5 & - \\ [3pt] 

esym& RiNNAL+ & 5519, 275985, 1104 & 40 & 7.53e-05$^{\dagger}$ & 7.4272770e+03 & 3600.0 &209.8 \\ 
$n=435$ & SDPNAL+ & 529, 692, 10111 & 46& 9.55e-07 & 7.4273883e+03 & 1083.3 & - \\ [3pt] 

vsym& RiNNAL+ & 96, 4850, 22 & 8 & 7.99e-07 & 1.6444786e+05 & 289.2 &64.3 \\ 
$n=1225$ & SDPNAL+ & 115, 198, 4142 & 602& 3.36e-04$^{\dagger}$ & 1.6561422e+05 & 3600.0 & - \\ [3pt] 

sym& RiNNAL+ & 84, 4275, 17 & 509 & 8.07e-07 & 1.4104759e+04 & 484.0 &21.2 \\ 
$n=1225$ & SDPNAL+ & 192, 199, 2628 & 507& 1.22e-06 & 1.4104759e+04 & 1987.4 & - \\ [3pt] 

\end{longtable}
\end{footnotesize}

As shown in Table \ref{tab-QMSTP}, RiNNAL+ fails to solve the esym instances, while SDPNAL+ fails to solve the vsym instances with dimension $m=1225$ within the one-hour time limit. 
Except for the esym instance, RiNNAL+ consistently outperforms SDPNAL+ across all problems. 
On some instances with low-rank solutions, such as the vsym instances, RiNNAL+ is about 10 to 20 times faster than SDPNAL+. 
Remarkably, RiNNAL+ is also 4 times faster than SDPNAL+ for the sym instance with $m=1225$, even though its solution has a very high rank that is close to $n/2$. 

We also compare our algorithm with the Peaceman-Rachford Splitting Method (PRSM) proposed in \cite{de2024spanning}. The computational experiments in \cite{de2024spanning} were performed on an AMD EPYC 7343 processor with 16 cores at 4.00 GHz and 1024 GB of RAM, running Debian GNU/Linux 11. PRSM uses stopping criteria based on gap closure, cut count, and iteration limits, whereas ours is based on the KKT residue. Following the criteria in \cite{de2024spanning}, we define relative gap as
\[
\%\text{gap}:=\frac{UB-LB}{UB}\times 100\%,
\]
where UB denotes the best-known upper bound from the literature\footnote{Data available at \url{https://homes.di.unimi.it/cordone/research/qmst.html}.}, and LB denotes the lower bound from different relaxations. As shown in Table \ref{tab:QMSTP-compare-PRSM}, the average lower bound obtained by our SDP-RLT relaxation is strictly tighter than that of the partial SDP-RLT relaxations proposed in \cite{de2024spanning}, particularly in cases like vsym and esym. Moreover, despite the fact that PRSM does not verify global optimality and is run on a more advanced processor with a greater core count and memory, our algorithm demonstrates significant computational advantages, achieving speed improvements of up to 800 times compared to PRSM and 23 times compared to SDPNAL+ in certain instances, such as in the vsym case. When a method fails to reach the required accuracy, we mark the corresponding LB with a dagger ``$\dagger$''. In such cases, the optimality gap is not reported in the table.

\begin{table}[h!]
    \centering
    \begin{tabular}{|c|r|r|r|r|}
    \hline
        Problem&Algorithm & \multicolumn{1}{c|}{LB} & {\%gap} &\multicolumn{1}{c|}{Time}\\
        
        \hline
        vsym&RiNNAL+ & 77804.93 &1.51 & 12.9\\
        $m=435$&SDPNAL+ & 77804.69 & 1.51 &299.7\\
        UB = 78999.90&PRSM & 76507.06 & 3.16 & 10781.1\\
        
        \hline
        sym&RiNNAL+ & 5326.27 & 11.46 & 99.3\\
        $m=435$&SDPNAL+ & 5326.27 & 11.46 & 130.5\\
        UB = 6015.90&PRSM & 5326.24 & 11.46 &102.2\\
        
        \hline
        esym&RiNNAL+ & 7427.28$^\dagger$ & - & 3600.0\\
        $m=435$&SDPNAL+ & 7427.39 & 7.81 & 1083.3 \\
        UB = 8056.70&PRSM & 7174.26 & 10.95 & 10045.1\\
        
        \hline
        vsym&RiNNAL+ & 164447.86 & 0.59 & 289.2\\
        $m=1225$&SDPNAL+ &  165614.22$^\dagger$ & - & 3600.0\\
        UB = 165419.60&PRSM & 154950.21 & 6.33 & 10835.9\\
        
        \hline
        sym&RiNNAL+ & 14104.76 & 19.94 & 484.0\\
        $m=1225$&SDPNAL+ & 14104.76 & 19.94 & 1987.4\\
        UB = 17616.90&PRSM & 14104.71 & 19.94 & 440.5\\
        \hline
    \end{tabular}
    \caption{Average performance comparison on the 
    OP benchmark dataset for \eqref{eq-QMSTP-relax}.}
    \label{tab:QMSTP-compare-PRSM}
\end{table}


\section{Conclusions}\label{sec-conclusion}

In this paper, we established the equivalence between the SDP-RLT and DNN relaxations of general mixed-binary quadratic programming problems. 
We propose RiNNAL+, a Riemannian ALM for solving general SDP relaxations \eqref{prob-DNN-general}, including the SDP-RLT and DNN relaxations. 
RiNNAL+ is an enhanced version of RiNNAL proposed in \cite{RiNNAL}.
By exploiting the low-rank structure of solutions, RiNNAL+ significantly reduces the computational complexity of solving large-scale semidefinite relaxations. 
We propose a two-phase framework that combines a low-rank decomposition phase to decrease the computational cost by utilizing the potential low-rank property of the solutions to the ALM subproblems and a convex lifting phase to guarantee convergence through projected gradient steps, as well as to automatically adjust the rank of the 
factorized variable in the low-rank phase.
Moreover, techniques such as random perturbation are employed in the low-rank phase to avoid nonsmoothness, while preprocessing is applied in the convex lifting phase to mitigate large condition numbers of the linear systems encountered when solving the ALM subproblems. A warm-start strategy is also implemented to accelerate convergence in the convex lifting phase. As a result, RiNNAL+ achieves robust numerical performance.
Extensive numerical experiments validated the computational efficiency and scalability of RiNNAL+ across various problem classes. 
These results highlight the potential of RiNNAL+ for addressing large-scale semidefinite relaxations within the branch-and-bound framework for solving challenging optimization problems involving quadratic constraints and mixed-integer variables. In the future, we will explore the effective use of 
RiNNAL+ for solving \eqref{SDP-RLT} subproblems within a branch-and-bound framework to solve \eqref{MBQP}.



\bibliographystyle{abbrv}
\bibliography{main}

\bigskip


\appendix


\section{Experiments on BIQ problems}
\label{appendix-BIQ}

\begin{footnotesize}
\begin{longtable}[c]{crrrccrrr}
\caption{Computational results for \eqref{SDP-RLT} relaxation of \eqref{eq-BIQ} problems. } \\
\toprule
Problem&Algorithm&Iteration&Rank& \multicolumn{1}{c}{$\operatorname{R_{max}}$}&Objective&\multicolumn{1}{c}{Time}&\multicolumn{1}{c}{TPG}\\
\midrule
\endfirsthead

\multicolumn{9}{c}%
{{ Table \thetable\ continued from previous page}} \\
\toprule
Problem&Algorithm&Iteration&Rank& \multicolumn{1}{c}{$\operatorname{R_{max}}$}&Objective&\multicolumn{1}{c}{Time}&\multicolumn{1}{c}{TPG}\\
\midrule
\endhead
\midrule
\multicolumn{9}{r}{{Continued on next page}} \\
\midrule
\endfoot

\bottomrule
\endlastfoot

1& RiNNAL+ & 16, 850, 3 & 50 & 8.70e-07 & -1.2259545e+05 & 8.0 &0.9 \\ 
$n=500$ & SDPNAL+ & 44, 97, 1535 & 76& 9.98e-07 & -1.2259531e+05 & 613.1 & - \\ [3pt] 

2& RiNNAL+ & 23, 1200, 5 & 54 & 8.93e-07 & -1.3497627e+05 & 9.2 &1.2 \\ 
$n=500$ & SDPNAL+ & 43, 85, 2028 & 64& 9.99e-07 & -1.3497612e+05 & 678.2 & - \\ [3pt] 

3& RiNNAL+ & 23, 1200, 5 & 56 & 9.12e-07 & -1.3272796e+05 & 8.8 &1.0 \\ 
$n=500$ & SDPNAL+ & 53, 134, 2078 & 68& 9.64e-07 & -1.3272779e+05 & 813.3 & - \\ [3pt] 

4& RiNNAL+ & 16, 850, 3 & 54 & 8.09e-07 & -1.3479364e+05 & 6.0 &0.6 \\ 
$n=500$ & SDPNAL+ & 52, 100, 1941 & 76& 9.77e-07 & -1.3479355e+05 & 718.1 & - \\ [3pt] 

5& RiNNAL+ & 31, 1600, 6 & 54 & 9.50e-07 & -1.3548259e+05 & 11.5 &1.3 \\ 
$n=500$ & SDPNAL+ & 42, 120, 1541 & 60& 9.96e-07 & -1.3548246e+05 & 619.3 & - \\ [3pt] 

6& RiNNAL+ & 23, 1200, 5 & 52 & 7.39e-07 & -1.3029907e+05 & 8.8 &1.0 \\ 
$n=500$ & SDPNAL+ & 52, 105, 2226 & 53& 9.87e-07 & -1.3029900e+05 & 738.8 & - \\ [3pt] 

7& RiNNAL+ & 20, 1050, 4 & 53 & 9.72e-07 & -1.2720408e+05 & 7.8 &0.9 \\ 
$n=500$ & SDPNAL+ & 53, 76, 2066 & 56& 1.03e-06 & -1.2720396e+05 & 706.1 & - \\ [3pt] 

8& RiNNAL+ & 17, 900, 4 & 53 & 9.41e-07 & -1.2793699e+05 & 6.6 &0.8 \\ 
$n=500$ & SDPNAL+ & 43, 84, 1815 & 198& 1.03e-06 & -1.2793688e+05 & 634.0 & - \\ [3pt] 

9& RiNNAL+ & 25, 1300, 5 & 50 & 8.62e-07 & -1.2956776e+05 & 9.2 &1.0 \\ 
$n=500$ & SDPNAL+ & 63, 136, 2977 & 54& 1.01e-06 & -1.2956767e+05 & 1010.1 & - \\ [3pt] 

10& RiNNAL+ & 22, 1150, 5 & 52 & 8.87e-07 & -1.2671564e+05 & 7.9 &0.8 \\ 
$n=500$ & SDPNAL+ & 46, 99, 2010 & 59& 9.99e-07 & -1.2671560e+05 & 670.0 & - \\ [3pt] 

1& RiNNAL+ & 12, 650, 3 & 79 & 9.58e-07 & -3.8983061e+05 & 19.3 &2.3 \\ 
$n=1000$ & SDPNAL+ & 530, 549, 5574 & 995& 3.83e-02$^{\dagger}$ & -4.1472242e+05 & 3600.0 & - \\ [3pt] 

2& RiNNAL+ & 16, 850, 3 & 74 & 7.03e-07 & -3.7136921e+05 & 24.8 &2.5 \\ 
$n=1000$ & SDPNAL+ & 545, 576, 5542 & 989& 1.52e-03$^{\dagger}$ & -3.7152189e+05 & 3600.0 & - \\ [3pt] 

3& RiNNAL+ & 13, 700, 3 & 76 & 7.90e-07 & -3.7607249e+05 & 20.9 &2.4 \\ 
$n=1000$ & SDPNAL+ & 480, 793, 4900 & 45& 2.37e-01$^{\dagger}$ & -1.3200132e+05 & 3600.0 & - \\ [3pt] 

4& RiNNAL+ & 17, 900, 4 & 77 & 9.53e-07 & -3.9016797e+05 & 27.2 &3.4 \\ 
$n=1000$ & SDPNAL+ & 527, 568, 5350 & 100& 2.68e-02$^{\dagger}$ & -3.9000070e+05 & 3600.0 & - \\ [3pt] 

5& RiNNAL+ & 13, 700, 3 & 78 & 6.57e-07 & -3.8989107e+05 & 21.0 &2.2 \\ 
$n=1000$ & SDPNAL+ & 527, 552, 5413 & 1001& 8.87e-02$^{\dagger}$ & -4.5097692e+05 & 3600.0 & - \\ [3pt] 

6& RiNNAL+ & 13, 700, 3 & 75 & 7.21e-07 & -3.7459095e+05 & 20.5 &2.3 \\ 
$n=1000$ & SDPNAL+ & 559, 589, 5800 & 66& 2.07e-02$^{\dagger}$ & -3.7327899e+05 & 3600.0 & - \\ [3pt] 

7& RiNNAL+ & 13, 700, 3 & 78 & 7.09e-07 & -3.7837149e+05 & 20.4 &2.2 \\ 
$n=1000$ & SDPNAL+ & 526, 561, 5500 & 70& 3.72e-02$^{\dagger}$ & -3.7843208e+05 & 3600.0 & - \\ [3pt] 

8& RiNNAL+ & 13, 700, 3 & 78 & 9.18e-07 & -3.9081229e+05 & 20.5 &2.2 \\ 
$n=1000$ & SDPNAL+ & 561, 587, 5800 & 272& 3.39e-01$^{\dagger}$ & -3.5060943e+05 & 3600.0 & - \\ [3pt] 

9& RiNNAL+ & 15, 800, 3 & 74 & 9.20e-07 & -3.7516775e+05 & 24.9 &2.6 \\ 
$n=1000$ & SDPNAL+ & 565, 603, 5674 & 999& 9.61e-03$^{\dagger}$ & -3.7990382e+05 & 3600.0 & - \\ [3pt] 

10& RiNNAL+ & 13, 700, 3 & 75 & 8.75e-07 & -3.7107976e+05 & 20.6 &2.3 \\ 
$n=1000$ & SDPNAL+ & 515, 603, 5302 & 864& 6.72e-03$^{\dagger}$ & -3.6912410e+05 & 3600.0 & - \\ [3pt] 

1& RiNNAL+ & 11, 600, 2 & 134 & 7.28e-07 & -1.6096512e+06 & 133.3 &15.9 \\ 
$n=2500$ & SDPNAL+ & - & - & - & - & - & - \\ [3pt] 

2& RiNNAL+ & 12, 650, 3 & 123 & 8.50e-07 & -1.5828341e+06 & 148.9 &20.7 \\ 
$n=2500$ & SDPNAL+ & - & - & - & - & - & - \\ [3pt] 

3& RiNNAL+ & 10, 550, 2 & 136 & 9.39e-07 & -1.5719952e+06 & 119.6 &12.1 \\ 
$n=2500$ & SDPNAL+ & - & - & - & - & - & - \\ [3pt] 

4& RiNNAL+ & 9, 500, 2 & 136 & 6.64e-07 & -1.5153039e+06 & 112.5 &14.5 \\ 
$n=2500$ & SDPNAL+ & - & - & - & - & - & - \\ [3pt] 

5& RiNNAL+ & 10, 550, 2 & 135 & 7.69e-07 & -1.5994619e+06 & 119.7 &12.2 \\ 
$n=2500$ & SDPNAL+ & - & - & - & - & - & - \\ [3pt] 

6& RiNNAL+ & 14, 750, 3 & 121 & 6.50e-07 & -1.5839251e+06 & 168.8 &20.9 \\ 
$n=2500$ & SDPNAL+ & - & - & - & - & - & - \\ [3pt] 

7& RiNNAL+ & 11, 600, 2 & 133 & 6.58e-07 & -1.5658227e+06 & 128.9 &11.5 \\ 
$n=2500$ & SDPNAL+ & - & - & - & - & - & - \\ [3pt] 

8& RiNNAL+ & 11, 600, 2 & 133 & 6.11e-07 & -1.5770182e+06 & 129.9 &12.1 \\ 
$n=2500$ & SDPNAL+ & - & - & - & - & - & - \\ [3pt] 

9& RiNNAL+ & 10, 550, 2 & 133 & 7.86e-07 & -1.5743417e+06 & 117.7 &13.1 \\ 
$n=2500$ & SDPNAL+ & - & - & - & - & - & - \\ [3pt] 

10& RiNNAL+ & 10, 550, 2 & 133 & 7.35e-07 & -1.5786588e+06 & 118.1 &11.9 \\ 
$n=2500$ & SDPNAL+ & - & - & - & - & - & - \\ [3pt] 

1& RiNNAL+ & 12, 650, 3 & 177 & 8.61e-07 & -4.6838013e+06 & 1103.1 &254.8 \\ 
$n=5000$ & SDPNAL+ & - & - & - & - & - & - \\ [3pt]

\end{longtable}
\end{footnotesize}


\section{Experiments on \texorpdfstring{$\theta_+$}{theta} problems}
\label{appendix-theta}

\begin{footnotesize}
\begin{longtable}[c]{crrrccrrr}
\caption{Computational results for \eqref{SDP-RLT} relaxation of \eqref{prob-theta-v1} problems.} \\
\toprule
Problem&Algorithm&Iteration&Rank&\multicolumn{1}{c}{$\operatorname{R_{max}}$} &Objective&\multicolumn{1}{c}{Time}&\multicolumn{1}{c}{TPG}\\
\midrule
\endfirsthead

\multicolumn{9}{c}%
{{ Table \thetable\ continued from previous page}} \\
\toprule
Problem&Algorithm&Iteration&Rank& \multicolumn{1}{c}{$\operatorname{R_{max}}$}&Objective&\multicolumn{1}{c}{Time}&\multicolumn{1}{c}{TPG}\\
\midrule
\endhead
\midrule
\multicolumn{9}{r}{{Continued on next page}} \\
\midrule
\endfoot

\bottomrule
\endlastfoot

G1& RiNNAL+ & 16, 850, 4 & 119 & 5.52e-07 & -1.4424455e+02 & 14.6 &1.6 \\ 
$n=800$ & SDPNAL+ & 64, 135, 1600 & 114& 5.04e-07 & -1.4424448e+02 & 706.0 & - \\ [3pt] 

G2& RiNNAL+ & 13, 700, 3 & 117 & 5.68e-07 & -1.4456459e+02 & 12.3 &1.4 \\ 
$n=800$ & SDPNAL+ & 66, 136, 1600 & 113& 9.27e-07 & -1.4456442e+02 & 714.9 & - \\ [3pt] 

G3& RiNNAL+ & 16, 850, 4 & 117 & 8.33e-07 & -1.4447617e+02 & 14.5 &1.6 \\ 
$n=800$ & SDPNAL+ & 65, 134, 1600 & 114& 8.39e-07 & -1.4447598e+02 & 706.5 & - \\ [3pt] 

G4& RiNNAL+ & 13, 700, 3 & 116 & 2.92e-07 & -1.4457530e+02 & 12.4 &1.4 \\ 
$n=800$ & SDPNAL+ & 65, 134, 1600 & 113& 6.37e-07 & -1.4457522e+02 & 729.9 & - \\ [3pt] 

G5& RiNNAL+ & 15, 800, 3 & 118 & 2.05e-07 & -1.4449452e+02 & 15.1 &1.6 \\ 
$n=800$ & SDPNAL+ & 65, 133, 1600 & 113& 6.69e-07 & -1.4449461e+02 & 738.3 & - \\ [3pt] 

G6& RiNNAL+ & 16, 850, 4 & 119 & 5.52e-07 & -1.4424455e+02 & 15.8 &1.7 \\ 
$n=800$ & SDPNAL+ & 64, 135, 1600 & 114& 5.04e-07 & -1.4424448e+02 & 732.1 & - \\ [3pt] 

G7& RiNNAL+ & 13, 700, 3 & 117 & 5.68e-07 & -1.4456459e+02 & 13.4 &1.6 \\ 
$n=800$ & SDPNAL+ & 66, 136, 1600 & 113& 9.27e-07 & -1.4456442e+02 & 731.8 & - \\ [3pt] 

G8& RiNNAL+ & 16, 850, 4 & 117 & 8.33e-07 & -1.4447617e+02 & 15.1 &1.8 \\ 
$n=800$ & SDPNAL+ & 65, 134, 1600 & 114& 8.39e-07 & -1.4447598e+02 & 743.0 & - \\ [3pt] 

G9& RiNNAL+ & 13, 700, 3 & 116 & 2.92e-07 & -1.4457530e+02 & 16.2 &2.1 \\ 
$n=800$ & SDPNAL+ & 65, 134, 1600 & 113& 6.37e-07 & -1.4457522e+02 & 728.4 & - \\ [3pt] 

G10& RiNNAL+ & 15, 800, 3 & 118 & 2.05e-07 & -1.4449452e+02 & 16.1 &1.6 \\ 
$n=800$ & SDPNAL+ & 65, 133, 1600 & 113& 6.69e-07 & -1.4449461e+02 & 737.0 & - \\ [3pt] 

G11& RiNNAL+ & 56, 2850, 11 & 8 & 6.41e-07 & -4.0000014e+02 & 47.7 &6.4 \\ 
$n=800$ & SDPNAL+ & 482, 722, 9358 & 2& 2.46e-07 & -4.0000005e+02 & 3008.3 & - \\ [3pt] 

G12& RiNNAL+ & 36, 1850, 7 & 8 & 6.68e-07 & -3.9999927e+02 & 30.1 &4.2 \\ 
$n=800$ & SDPNAL+ & 144, 290, 4813 & 2& 2.82e-07 & -4.0000002e+02 & 1664.0 & - \\ [3pt] 

G13& RiNNAL+ & 29, 1500, 6 & 60 & 6.47e-07 & -3.9841750e+02 & 26.8 &3.6 \\ 
$n=800$ & SDPNAL+ & 79, 390, 2326 & 65& 2.48e-07 & -3.9841665e+02 & 1627.7 & - \\ [3pt] 

G14& RiNNAL+ & 34, 1750, 7 & 75 & 9.08e-07 & -2.7900052e+02 & 33.2 &5.1 \\ 
$n=800$ & SDPNAL+ & 118, 558, 3700 & 69& 2.38e-05$^{\dagger}$ & -2.7899998e+02 & 3600.0 & - \\ [3pt] 

G15& RiNNAL+ & 102, 5150, 21 & 136 & 9.81e-07 & -2.8374968e+02 & 102.0 &17.3 \\ 
$n=800$ & SDPNAL+ & 130, 477, 4874 & 199& 7.77e-06$^{\dagger}$ & -2.8374734e+02 & 3600.0 & - \\ [3pt] 

G16& RiNNAL+ & 236, 11850, 48 & 218 & 9.82e-07 & -2.8511991e+02 & 240.5 &31.2 \\ 
$n=800$ & SDPNAL+ & 117, 493, 4700 & 132& 8.46e-06$^{\dagger}$ & -2.8511773e+02 & 3600.0 & - \\ [3pt] 

G17& RiNNAL+ & 158, 7950, 32 & 182 & 9.12e-07 & -2.8612528e+02 & 156.7 &21.1 \\ 
$n=800$ & SDPNAL+ & 131, 448, 5145 & 348& 4.90e-05$^{\dagger}$ & -2.8614605e+02 & 3600.0 & - \\ [3pt] 

G18& RiNNAL+ & 34, 1750, 7 & 75 & 9.08e-07 & -2.7900052e+02 & 34.0 &5.5 \\ 
$n=800$ & SDPNAL+ & 118, 558, 3700 & 69& 2.38e-05$^{\dagger}$ & -2.7899998e+02 & 3600.0 & - \\ [3pt] 

G19& RiNNAL+ & 102, 5150, 21 & 136 & 9.81e-07 & -2.8374968e+02 & 103.0 &17.6 \\ 
$n=800$ & SDPNAL+ & 130, 477, 5259 & 220& 6.51e-06$^{\dagger}$ & -2.8375151e+02 & 3600.0 & - \\ [3pt] 

G20& RiNNAL+ & 236, 11850, 48 & 218 & 9.82e-07 & -2.8511991e+02 & 235.6 &30.5 \\ 
$n=800$ & SDPNAL+ & 123, 512, 4700 & 125& 7.93e-06$^{\dagger}$ & -2.8511787e+02 & 3600.0 & - \\ [3pt] 

G21& RiNNAL+ & 158, 7950, 32 & 182 & 9.12e-07 & -2.8612528e+02 & 157.7 &21.5 \\ 
$n=800$ & SDPNAL+ & 132, 457, 5500 & 76& 2.54e-05$^{\dagger}$ & -2.8611971e+02 & 3600.0 & - \\ [3pt] 

G22& RiNNAL+ & 19, 1000, 4 & 96 & 9.69e-07 & -5.7739734e+02 & 132.4 &14.8 \\ 
$n=2000$ & SDPNAL+ & - & - & - & - & - \\ [3pt] 

G23& RiNNAL+ & 17, 900, 4 & 93 & 4.33e-07 & -5.7654954e+02 & 119.8 &12.4 \\ 
$n=2000$ & SDPNAL+ & - & - & - & - & - \\ [3pt] 

G24& RiNNAL+ & 16, 850, 3 & 98 & 7.28e-07 & -5.7891045e+02 & 113.2 &10.3 \\ 
$n=2000$ & SDPNAL+ & - & - & - & - & - \\ [3pt] 

G25& RiNNAL+ & 13, 700, 3 & 98 & 9.00e-07 & -5.7703872e+02 & 97.0 &11.0 \\ 
$n=2000$ & SDPNAL+ & - & - & - & - & - \\ [3pt] 

G26& RiNNAL+ & 19, 1000, 4 & 97 & 1.95e-07 & -5.7691694e+02 & 121.9 &12.5 \\ 
$n=2000$ & SDPNAL+ & - & - & - & - & - \\ [3pt] 

G27& RiNNAL+ & 19, 1000, 4 & 96 & 9.69e-07 & -5.7739734e+02 & 122.1 &13.5 \\ 
$n=2000$ & SDPNAL+ & - & - & - & - & - \\ [3pt] 

G28& RiNNAL+ & 16, 850, 3 & 101 & 9.96e-07 & -5.7683605e+02 & 101.2 &8.7 \\ 
$n=2000$ & SDPNAL+ & - & - & - & - & - \\ [3pt] 

G29& RiNNAL+ & 16, 850, 3 & 98 & 7.28e-07 & -5.7891045e+02 & 103.1 &9.3 \\ 
$n=2000$ & SDPNAL+ & - & - & - & - & - \\ [3pt] 

G30& RiNNAL+ & 13, 700, 3 & 98 & 9.00e-07 & -5.7703872e+02 & 87.7 &10.4 \\ 
$n=2000$ & SDPNAL+ & - & - & - & - & - \\ [3pt] 

G31& RiNNAL+ & 19, 1000, 4 & 97 & 1.95e-07 & -5.7691694e+02 & 118.6 &11.7 \\ 
$n=2000$ & SDPNAL+ & - & - & - & - & - \\ [3pt] 

G32& RiNNAL+ & 41, 2100, 8 & 8 & 2.74e-07 & -9.9999691e+02 & 241.3 &31.5 \\ 
$n=2000$ & SDPNAL+ & - & - & - & - & - \\ [3pt] 

G33& RiNNAL+ & 39, 2000, 8 & 124 & 9.73e-07 & -9.9604039e+02 & 239.8 &33.2 \\ 
$n=2000$ & SDPNAL+ & - & - & - & - & - \\ [3pt] 

G34& RiNNAL+ & 53, 2700, 11 & 8 & 8.83e-07 & -9.9999933e+02 & 322.7 &45.2 \\ 
$n=2000$ & SDPNAL+ & - & - & - & - & - \\ [3pt] 

G35& RiNNAL+ & 91, 4600, 19 & 359 & 9.97e-07 & -7.1824082e+02 & 652.3 &112.4 \\ 
$n=2000$ & SDPNAL+ & - & - & - & - & - \\ [3pt] 

G36& RiNNAL+ & 111, 5600, 22 & 383 & 9.80e-07 & -6.9600402e+02 & 819.5 &141.1 \\ 
$n=2000$ & SDPNAL+ & - & - & - & - & - \\ [3pt] 

G37& RiNNAL+ & 55, 2800, 11 & 294 & 9.61e-07 & -7.0800224e+02 & 404.7 &75.1 \\ 
$n=2000$ & SDPNAL+ & - & - & - & - & - \\ [3pt] 

G38& RiNNAL+ & 58, 2950, 12 & 326 & 1.00e-06 & -7.1600307e+02 & 422.5 &74.2 \\ 
$n=2000$ & SDPNAL+ & - & - & - & - & - \\ [3pt] 

G39& RiNNAL+ & 91, 4600, 19 & 359 & 9.97e-07 & -7.1824082e+02 & 653.3 &112.8 \\ 
$n=2000$ & SDPNAL+ & - & - & - & - & - \\ [3pt] 

G40& RiNNAL+ & 111, 5600, 22 & 383 & 9.80e-07 & -6.9600402e+02 & 821.8 &141.7 \\ 
$n=2000$ & SDPNAL+ & - & - & - & - & - \\ [3pt] 

G41& RiNNAL+ & 55, 2800, 11 & 294 & 9.61e-07 & -7.0800224e+02 & 405.1 &75.4 \\ 
$n=2000$ & SDPNAL+ & - & - & - & - & - \\ [3pt] 

G42& RiNNAL+ & 58, 2950, 12 & 326 & 1.00e-06 & -7.1600307e+02 & 421.1 &73.9 \\ 
$n=2000$ & SDPNAL+ & - & - & - & - & - \\ [3pt] 

G43& RiNNAL+ & 11, 600, 2 & 94 & 8.64e-07 & -2.7973479e+02 & 15.5 &1.3 \\ 
$n=1000$ & SDPNAL+ & 75, 206, 1450 & 73& 9.01e-07 & -2.7973560e+02 & 1370.3 & - \\ [3pt] 

G44& RiNNAL+ & 16, 850, 3 & 76 & 7.12e-07 & -2.7974525e+02 & 22.2 &2.0 \\ 
$n=1000$ & SDPNAL+ & 95, 196, 1764 & 73& 1.98e-06 & -2.7974536e+02 & 1417.6 & - \\ [3pt] 

G45& RiNNAL+ & 16, 850, 4 & 76 & 8.87e-07 & -2.7931758e+02 & 22.5 &2.9 \\ 
$n=1000$ & SDPNAL+ & 94, 212, 1754 & 74& 1.97e-06 & -2.7931692e+02 & 1496.1 & - \\ [3pt] 

G46& RiNNAL+ & 15, 800, 3 & 76 & 5.74e-07 & -2.7903227e+02 & 20.4 &1.8 \\ 
$n=1000$ & SDPNAL+ & 77, 191, 1450 & 72& 6.59e-07 & -2.7903199e+02 & 1268.6 & - \\ [3pt] 

G47& RiNNAL+ & 14, 750, 3 & 76 & 9.90e-07 & -2.8089199e+02 & 19.7 &2.0 \\ 
$n=1000$ & SDPNAL+ & 76, 200, 1450 & 72& 7.86e-07 & -2.8089057e+02 & 1317.1 & - \\ [3pt] 

G48& RiNNAL+ & 40, 2050, 8 & 8 & 6.34e-07 & -1.4999937e+03 & 1082.7 &179.3 \\ 
$n=3000$ & SDPNAL+ & - & - & - & - & - \\ [3pt] 

G49& RiNNAL+ & 63, 3200, 13 & 14 & 7.66e-07 & -1.4999553e+03 & 1750.2 &313.0 \\ 
$n=3000$ & SDPNAL+ & - & - & - & - & - \\ [3pt] 

G50& RiNNAL+ & 24, 1250, 5 & 134 & 9.39e-07 & -1.4940617e+03 & 754.8 &149.3 \\ 
$n=3000$ & SDPNAL+ & - & - & - & - & - \\ [3pt] 

G51& RiNNAL+ & 66, 3350, 14 & 224 & 9.41e-07 & -3.4900100e+02 & 100.8 &16.3 \\ 
$n=1000$ & SDPNAL+ & 84, 411, 1900 & 136& 6.79e-05$^{\dagger}$ & -3.4899936e+02 & 3600.0 & - \\ [3pt] 

G52& RiNNAL+ & 119, 6000, 24 & 186 & 9.51e-07 & -3.4838718e+02 & 182.1 &29.7 \\ 
$n=1000$ & SDPNAL+ & 93, 383, 2500 & 111& 8.46e-05$^{\dagger}$ & -3.4837547e+02 & 3600.0 & - \\ [3pt] 

G53& RiNNAL+ & 180, 9050, 36 & 224 & 9.83e-07 & -3.4821414e+02 & 269.0 &37.1 \\ 
$n=1000$ & SDPNAL+ & 102, 356, 2892 & 798& 7.16e-04$^{\dagger}$ & -3.4871080e+02 & 3600.0 & - \\ [3pt] 

G54& RiNNAL+ & 46, 2350, 10 & 168 & 9.01e-07 & -3.4100141e+02 & 68.4 &11.3 \\ 
$n=1000$ & SDPNAL+ & 100, 372, 2500 & 110& 1.93e-04$^{\dagger}$ & -3.4090921e+02 & 3600.0 & - \\ [3pt] 

1dc.1024& RiNNAL+ & 271, 13600, 217 & 329 & 8.41e-07 & -9.5551166e+01 & 698.3 &284.4 \\ 
$n=1024$ & SDPNAL+ & 87, 203, 2466 & 315& 9.87e-07 & -9.5551123e+01 & 2930.4 & - \\ [3pt] 

1dc.2048& RiNNAL+ & 148, 7450, 132 & 778 & 9.61e-07 & -1.7404329e+02 & 1945.3 &748.6 \\ 
$n=2048$ & SDPNAL+ & - & - & - & - & - \\ [3pt] 

1et.1024& RiNNAL+ & 51, 2600, 11 & 294 & 8.84e-07 & -1.8207222e+02 & 84.8 &10.9 \\ 
$n=1024$ & SDPNAL+ & 89, 214, 2219 & 290& 9.98e-07 & -1.8207159e+02 & 2113.9 & - \\ [3pt] 

1et.2048& RiNNAL+ & 101, 5100, 24 & 571 & 9.88e-07 & -3.3816662e+02 & 835.3 &119.1 \\ 
$n=2048$ & SDPNAL+ & - & - & - & - & - \\ [3pt] 

1tc.1024& RiNNAL+ & 116, 5850, 24 & 288 & 9.38e-07 & -2.0420520e+02 & 192.4 &28.1 \\ 
$n=1024$ & SDPNAL+ & 100, 300, 3430 & 339& 9.43e-06$^{\dagger}$ & -2.0419958e+02 & 3600.0 & - \\ [3pt] 

1tc.2048& RiNNAL+ & 94, 4750, 26 & 520 & 9.85e-07 & -3.7049061e+02 & 820.7 &162.8 \\ 
$n=2048$ & SDPNAL+ & - & - & - & - & - \\ [3pt] 

1zc.1024& RiNNAL+ & 26, 1350, 6 & 316 & 4.63e-07 & -1.2800011e+02 & 43.3 &6.0 \\ 
$n=1024$ & SDPNAL+ & 60, 140, 1010 & 653& 8.38e-07 & -1.2800009e+02 & 934.2 & - \\ [3pt] 

1zc.2048& RiNNAL+ & 36, 1850, 8 & 521 & 5.43e-07 & -2.3740022e+02 & 285.9 &39.0 \\ 
$n=2048$ & SDPNAL+ & - & - & - & - & - \\ [3pt] 

2dc.1024& RiNNAL+ & 111, 5600, 39 & 769 & 9.40e-07 & -1.7710237e+01 & 260.0 &38.0 \\ 
$n=1024$ & SDPNAL+ & 103, 271, 3434 & 774& 1.18e-05$^{\dagger}$ & -1.7708084e+01 & 3600.0 & - \\ [3pt]

\end{longtable}
\end{footnotesize}


\section{Experiments on ccMSSC problems}
\label{appendix-ccMSSC}

\begin{footnotesize}
\begin{longtable}[c]{lrrrccrrr}
\caption{Computational results for \eqref{SDP-RLT} relaxation of \eqref{prob-ccMSCC-v1} problem.} \\
\toprule
Prob ($k,n$)&Algorithm&Iteration&Rank& \multicolumn{1}{c}{$\operatorname{R_{max}}$}&Objective&\multicolumn{1}{c}{Time}&\multicolumn{1}{c}{TPG}\\
\midrule
\endfirsthead

\multicolumn{9}{c}%
{{ Table \thetable\ continued from previous page}} \\
\toprule
Prob ($k,n$)&Algorithm&Iteration&Rank& \multicolumn{1}{c}{$\operatorname{R_{max}}$}&Objective&\multicolumn{1}{c}{Time}&\multicolumn{1}{c}{TPG}\\
\midrule
\endhead
\midrule
\multicolumn{9}{r}{{Continued on next page}} \\
\midrule
\endfoot

\bottomrule
\endlastfoot

1& RiNNAL+ & 411, 20600, 83 & 23 & 9.85e-07 & 4.0610863e+05 & 248.0 &25.7 \\ 
2, 516& SDPNAL+ & 627, 1080, 18513 & 496& 6.38e-04$^{\dagger}$ & 4.0608249e+05 & 3600.0 & - \\ [3pt] 

2& RiNNAL+ & 421, 21100, 85 & 14 & 9.82e-07 & 1.2476265e+05 & 273.1 &27.4 \\ 
2, 536& SDPNAL+ & 309, 1097, 14019 & 12& 6.81e-05$^{\dagger}$ & 1.2476325e+05 & 3600.0 & - \\ [3pt] 

3& RiNNAL+ & 25, 1300, 5 & 11 & 9.93e-07 & 2.5343203e+03 & 29.2 &4.3 \\ 
3, 540& SDPNAL+ & 60, 69, 3108 & 20& 9.90e-07 & 2.5343241e+03 & 484.1 & - \\ [3pt] 

4& RiNNAL+ & 51, 2600, 11 & 15 & 8.73e-07 & 3.4886741e+03 & 54.6 &9.3 \\ 
3, 540& SDPNAL+ & 104, 143, 3301 & 29& 8.35e-07 & 3.4886788e+03 & 537.1 & - \\ [3pt] 

5& RiNNAL+ & 9, 500, 2 & 5 & 9.27e-07 & 7.3254609e+04 & 14.3 &1.4 \\ 
2, 720& SDPNAL+ & 201, 234, 10487 & 17& 8.09e-06$^{\dagger}$ & 7.3254549e+04 & 3600.0 & - \\ [3pt] 

6& RiNNAL+ & 6, 350, 1 & 5 & 7.54e-07 & 2.3820331e+08 & 41.8 &0.5 \\ 
2, 726& SDPNAL+ & 33, 177, 1360 & 1& 4.16e-08 & 2.3820380e+08 & 770.2 & - \\ [3pt] 

7& RiNNAL+ & 7, 382, 2 & 7 & 3.02e-08 & 2.9237520e+03 & 85.5 &41.2 \\ 
3, 798& SDPNAL+ & 23, 40, 850 & 1& 3.47e-07 & 2.9237725e+03 & 441.3 & - \\ [3pt] 

8& RiNNAL+ & 21, 1100, 4 & 16 & 7.31e-07 & 1.6335116e+04 & 45.2 &11.8 \\ 
4, 800& SDPNAL+ & 40, 44, 1501 & 18& 7.37e-07 & 1.6335114e+04 & 505.7 & - \\ [3pt] 

9& RiNNAL+ & 6, 331, 2 & 37 & 5.97e-07 & 4.2206142e+09 & 53.9 &3.4 \\ 
2, 902& SDPNAL+ & 56, 946, 1500 & 38& 4.54e-02$^{\dagger}$ & 4.2166306e+09 & 3600.0 & - \\ [3pt] 

10& RiNNAL+ & 3, 200, 1 & 7 & 8.78e-07 & 4.0536895e+09 & 12.6 &1.1 \\ 
2, 902& SDPNAL+ & 54, 246, 2312 & 1& 7.91e-07 & 4.0536886e+09 & 2313.4 & - \\ [3pt] 

11& RiNNAL+ & 3, 200, 1 & 16 & 8.50e-07 & 4.0728152e+09 & 14.3 &2.8 \\ 
2, 902& SDPNAL+ & 67, 299, 3092 & 2& 4.49e-07 & 4.0728127e+09 & 3022.6 & - \\ [3pt] 

12& RiNNAL+ & 31, 1600, 7 & 60 & 8.07e-07 & 4.6667138e+05 & 109.0 &5.4 \\ 
2, 922& SDPNAL+ & 93, 398, 3530 & 62& 4.70e-06$^{\dagger}$ & 4.6667139e+05 & 3600.0 & - \\ [3pt] 

13& RiNNAL+ & 8, 402, 2 & 59 & 4.13e-08 & 3.1659837e+03 & 606.9 &68.4 \\ 
3, 933& SDPNAL+ & 31, 156, 700 & 1& 9.29e-08 & 3.1660319e+03 & 1284.7 & - \\ [3pt] 

14& RiNNAL+ & 15, 800, 3 & 6 & 7.48e-07 & 6.1181510e+05 & 35.8 &3.3 \\ 
2, 1000& SDPNAL+ & 138, 169, 4990 & 125& 1.01e-05$^{\dagger}$ & 6.1181479e+05 & 3600.0 & - \\ [3pt] 

15& RiNNAL+ & 12, 511, 3 & 21 & 3.90e-07 & 8.5151301e+02 & 1500.3 &193.9 \\ 
3, 1662& SDPNAL+ & - & - & - & - & - & - \\ [3pt] 

16& RiNNAL+ & 10, 550, 2 & 7 & 6.15e-07 & 6.1040841e+03 & 86.8 &12.5 \\ 
2, 1752& SDPNAL+ & - & - & - & - & - & - \\ [3pt] 

17& RiNNAL+ & 5, 300, 1 & 5 & 8.09e-07 & 7.0548748e+04 & 45.5 &4.4 \\ 
2, 1768& SDPNAL+ & - & - & - & - & - & - \\ [3pt] 

18& RiNNAL+ & 5, 300, 1 & 6 & 8.05e-07 & 2.3371721e+03 & 47.6 &4.9 \\ 
2, 1782& SDPNAL+ & - & - & - & - & - & - \\ [3pt] 

19& RiNNAL+ & 15, 800, 3 & 7 & 6.90e-07 & 1.4107262e+03 & 131.8 &22.8 \\ 
2, 1782& SDPNAL+ & - & - & - & - & - & - \\ [3pt] 

20& RiNNAL+ & 2, 150, 1 & 28 & 5.87e-07 & 7.3610392e+08 & 42.2 &13.7 \\ 
2, 1800& SDPNAL+ & - & - & - & - & - & - \\ [3pt] 

21& RiNNAL+ & 18, 905, 8 & 11 & 1.96e-07 & 5.9631573e+02 & 1931.4 &1330.1 \\ 
3, 1815& SDPNAL+ & - & - & - & - & - & - \\ [3pt] 

22& RiNNAL+ & 15, 800, 3 & 11 & 5.17e-07 & 6.0024695e+04 & 156.7 &23.0 \\ 
2, 1960& SDPNAL+ & - & - & - & - & - & - \\ [3pt] 

23& RiNNAL+ & 9, 500, 2 & 7 & 3.26e-07 & 9.1772840e+03 & 117.6 &19.0 \\ 
2, 1966& SDPNAL+ & - & - & - & - & - & - \\ [3pt] 

24& RiNNAL+ & 86, 4350, 18 & 23 & 9.36e-07 & 9.9658646e+05 & 2375.3 &339.4 \\ 
3, 2250& SDPNAL+ & - & - & - & - & - & - \\ [3pt] 

25& RiNNAL+ & 51, 2600, 11 & 61 & 5.42e-07 & 1.0500893e+06 & 2215.9 &170.8 \\ 
3, 2250& SDPNAL+ & - & - & - & - & - & - \\ [3pt] 

26& RiNNAL+ & 21, 1100, 4 & 45 & 8.68e-07 & 1.0568137e+06 & 951.1 &96.5 \\ 
3, 2250& SDPNAL+ & - & - & - & - & - & - \\ [3pt] 

27& RiNNAL+ & 56, 2850, 11 & 6 & 9.77e-07 & 1.3311950e+04 & 1141.7 &204.7 \\ 
2, 2324& SDPNAL+ & - & - & - & - & - & - \\ [3pt] 

28& RiNNAL+ & 10, 550, 2 & 14 & 9.04e-07 & 3.6482521e+05 & 346.4 &67.3 \\ 
2, 2740& SDPNAL+ & - & - & - & - & - & - \\ [3pt] 

\end{longtable}
\end{footnotesize}


\section{Experiments on QMSTP problems}
\label{appendix-QMSTP}

\begin{footnotesize}
\begin{longtable}[c]{lrrrccrrr}
\caption{Computational results for \eqref{SDP-RLT} relaxation of \eqref{eq-QMSTP-relax} problems.} \\
\toprule
Problem&Algorithm&Iteration&Rank& \multicolumn{1}{c}{$\operatorname{R_{max}}$}&Objective&\multicolumn{1}{c}{Time}&\multicolumn{1}{c}{TPG}\\
\midrule
\endfirsthead

\multicolumn{9}{c}%
{{ Table \thetable\ continued from previous page}} \\
\toprule
Problem&Algorithm&Iteration&Rank& \multicolumn{1}{c}{$\operatorname{R_{max}}$}&Objective&\multicolumn{1}{c}{Time}&\multicolumn{1}{c}{TPG}\\
\midrule
\endhead
\midrule
\multicolumn{9}{r}{{Continued on next page}} \\
\midrule
\endfoot

\bottomrule
\endlastfoot

vsym-1& RiNNAL+ & 14, 750, 3 & 4 & 5.84e-07 & 8.2077003e+04 & 7.5 &1.1 \\ 
$n=435$ & SDPNAL+ & 115, 175, 2653 & 1& 5.15e-07 & 8.2077033e+04 & 221.4 & - \\ [3pt] 

vsym-2& RiNNAL+ & 11, 600, 2 & 6 & 8.80e-07 & 7.4235009e+04 & 5.8 &0.6 \\ 
$n=435$ & SDPNAL+ & 144, 232, 3102 & 1& 1.64e-07 & 7.4234956e+04 & 272.2 & - \\ [3pt] 

vsym-3& RiNNAL+ & 16, 850, 4 & 4 & 1.33e-08 & 7.4105014e+04 & 11.0 &1.4 \\ 
$n=435$ & SDPNAL+ & 96, 145, 2200 & 1& 2.88e-07 & 7.4104978e+04 & 169.3 & - \\ [3pt] 

vsym-4& RiNNAL+ & 142, 7150, 29 & 10 & 9.28e-07 & 8.7476205e+04 & 54.6 &7.1 \\ 
$n=435$ & SDPNAL+ & 382, 616, 10201 & 1& 2.22e-07 & 8.7475967e+04 & 808.1 & - \\ [3pt] 

vsym-5& RiNNAL+ & 26, 1350, 6 & 4 & 9.95e-07 & 8.6586018e+04 & 11.4 &1.4 \\ 
$n=435$ & SDPNAL+ & 277, 414, 6526 & 1& 9.60e-08 & 8.6585979e+04 & 534.1 & - \\ [3pt] 

vsym-6& RiNNAL+ & 11, 600, 3 & 5 & 4.13e-07 & 6.9976003e+04 & 5.8 &1.1 \\ 
$n=435$ & SDPNAL+ & 80, 108, 2051 & 1& 4.45e-07 & 6.9975772e+04 & 159.4 & - \\ [3pt] 

vsym-7& RiNNAL+ & 14, 750, 3 & 4 & 4.99e-07 & 7.8864006e+04 & 7.3 &0.9 \\ 
$n=435$ & SDPNAL+ & 106, 203, 2500 & 1& 3.01e-07 & 7.8861495e+04 & 228.7 & - \\ [3pt] 

vsym-8& RiNNAL+ & 26, 1350, 5 & 5 & 7.81e-07 & 7.3015027e+04 & 11.4 &1.4 \\ 
$n=435$ & SDPNAL+ & 82, 124, 1900 & 1& 9.18e-08 & 7.3015116e+04 & 158.9 & - \\ [3pt] 

vsym-9& RiNNAL+ & 15, 800, 3 & 4 & 1.82e-07 & 7.2562004e+04 & 7.1 &0.8 \\ 
$n=435$ & SDPNAL+ & 130, 196, 3550 & 1& 9.41e-07 & 7.2562144e+04 & 270.6 & - \\ [3pt] 

vsym-10& RiNNAL+ & 11, 600, 3 & 6 & 1.08e-08 & 7.9153020e+04 & 7.6 &1.3 \\ 
$n=435$ & SDPNAL+ & 63, 90, 2450 & 1& 1.26e-07 & 7.9153501e+04 & 173.8 & - \\ [3pt] 

sym-1& RiNNAL+ & 105, 5300, 21 & 196 & 8.70e-07 & 5.4457695e+03 & 104.5 &3.0 \\ 
$n=435$ & SDPNAL+ & 83, 85, 1377 & 194& 1.80e-06 & 5.4457692e+03 & 120.0 & - \\ [3pt] 

sym-2& RiNNAL+ & 121, 6100, 25 & 196 & 6.62e-07 & 5.3601951e+03 & 116.9 &3.2 \\ 
$n=435$ & SDPNAL+ & 200, 267, 2927 & 195& 9.64e-07 & 5.3601944e+03 & 286.7 & - \\ [3pt] 

sym-3& RiNNAL+ & 111, 5600, 23 & 201 & 8.62e-07 & 5.2587073e+03 & 115.5 &3.1 \\ 
$n=435$ & SDPNAL+ & 66, 70, 1128 & 199& 9.93e-07 & 5.2587071e+03 & 93.8 & - \\ [3pt] 

sym-4& RiNNAL+ & 106, 5350, 22 & 194 & 4.39e-07 & 5.3695585e+03 & 102.5 &2.9 \\ 
$n=435$ & SDPNAL+ & 67, 67, 1173 & 192& 7.67e-07 & 5.3695582e+03 & 93.4 & - \\ [3pt] 

sym-5& RiNNAL+ & 80, 4050, 16 & 196 & 8.19e-07 & 5.2813223e+03 & 70.7 &2.3 \\ 
$n=435$ & SDPNAL+ & 65, 68, 1112 & 194& 9.96e-07 & 5.2813222e+03 & 86.9 & - \\ [3pt] 

sym-6& RiNNAL+ & 104, 5250, 21 & 196 & 9.19e-07 & 5.2952322e+03 & 101.4 &2.9 \\ 
$n=435$ & SDPNAL+ & 152, 184, 2001 & 194& 9.83e-07 & 5.2952307e+03 & 180.8 & - \\ [3pt] 

sym-7& RiNNAL+ & 121, 6100, 24 & 198 & 9.31e-07 & 5.3275724e+03 & 128.1 &3.2 \\ 
$n=435$ & SDPNAL+ & 83, 85, 1271 & 197& 9.96e-07 & 5.3275724e+03 & 117.1 & - \\ [3pt] 

sym-8& RiNNAL+ & 93, 4700, 19 & 196 & 6.08e-07 & 5.3277159e+03 & 84.6 &2.6 \\ 
$n=435$ & SDPNAL+ & 68, 75, 1149 & 194& 9.99e-07 & 5.3277160e+03 & 94.7 & - \\ [3pt] 

sym-9& RiNNAL+ & 91, 4600, 18 & 197 & 7.97e-07 & 5.3229653e+03 & 81.2 &2.5 \\ 
$n=435$ & SDPNAL+ & 101, 105, 1443 & 197& 1.42e-06 & 5.3229654e+03 & 124.6 & - \\ [3pt] 

sym-10& RiNNAL+ & 96, 4850, 20 & 199 & 9.37e-07 & 5.2736803e+03 & 87.3 &2.7 \\ 
$n=435$ & SDPNAL+ & 78, 84, 1276 & 197& 9.83e-07 & 5.2736802e+03 & 107.3 & - \\ [3pt] 

esym-1& RiNNAL+ & 5511, 275600, 1102 & 40 & 9.02e-05$^{\dagger}$ & 6.3770159e+03 & 3600.0 &205.0 \\ 
$n=435$ & SDPNAL+ & 647, 891, 13000 & 38& 1.00e-06 & 6.3770874e+03 & 1386.6 & - \\ [3pt] 

esym-2& RiNNAL+ & 6414, 320750, 1283 & 33 & 4.81e-05$^{\dagger}$ & 6.9466174e+03 & 3600.0 &252.0 \\ 
$n=435$ & SDPNAL+ & 548, 773, 10900 & 40& 9.36e-07 & 6.9466717e+03 & 1225.0 & - \\ [3pt] 

esym-3& RiNNAL+ & 5086, 254350, 1017 & 39 & 7.47e-05$^{\dagger}$ & 7.9610653e+03 & 3600.0 &219.6 \\ 
$n=435$ & SDPNAL+ & 385, 493, 8635 & 51& 1.00e-06 & 7.9611728e+03 & 938.2 & - \\ [3pt] 

esym-4& RiNNAL+ & 6375, 318800, 1275 & 25 & 1.87e-04$^{\dagger}$ & 7.5283574e+03 & 3600.0 &268.7 \\ 
$n=435$ & SDPNAL+ & 461, 629, 9400 & 39& 9.88e-07 & 7.5284582e+03 & 998.8 & - \\ [3pt] 

esym-5& RiNNAL+ & 6031, 301600, 1206 & 41 & 4.66e-05$^{\dagger}$ & 7.6181667e+03 & 3600.0 &213.7 \\ 
$n=435$ & SDPNAL+ & 451, 630, 9100 & 48& 9.92e-07 & 7.6183146e+03 & 997.8 & - \\ [3pt] 

esym-6& RiNNAL+ & 5264, 263250, 1053 & 47 & 1.93e-05$^{\dagger}$ & 7.4736776e+03 & 3600.0 &192.0 \\ 
$n=435$ & SDPNAL+ & 331, 497, 7518 & 46& 1.00e-06 & 7.4737703e+03 & 824.0 & - \\ [3pt] 

esym-7& RiNNAL+ & 5091, 254600, 1018 & 59 & 7.48e-05$^{\dagger}$ & 8.0184786e+03 & 3600.0 &183.1 \\ 
$n=435$ & SDPNAL+ & 801, 858, 11051 & 68& 1.00e-06 & 8.0187444e+03 & 1127.6 & - \\ [3pt] 

esym-8& RiNNAL+ & 4935, 246800, 987 & 30 & 1.78e-05$^{\dagger}$ & 6.8570588e+03 & 3600.0 &172.2 \\ 
$n=435$ & SDPNAL+ & 616, 833, 12701 & 29& 9.05e-07 & 6.8570981e+03 & 1321.7 & - \\ [3pt] 

esym-9& RiNNAL+ & 5627, 281400, 1126 & 26 & 1.82e-04$^{\dagger}$ & 7.5313705e+03 & 3600.0 &235.7 \\ 
$n=435$ & SDPNAL+ & 675, 844, 12858 & 45& 9.99e-07 & 7.5315178e+03 & 1381.6 & - \\ [3pt] 

esym-10& RiNNAL+ & 4853, 242700, 971 & 64 & 1.18e-05$^{\dagger}$ & 7.9609614e+03 & 3600.0 &155.5 \\ 
$n=435$ & SDPNAL+ & 372, 476, 5950 & 60& 9.71e-07 & 7.9610475e+03 & 631.6 & - \\ [3pt] 

vsym-1& RiNNAL+ & 10, 550, 2 & 9 & 9.66e-07 & 1.7014306e+05 & 38.6 &5.4 \\ 
$n=1225$ & SDPNAL+ & 106, 220, 4200 & 2& 8.58e-06$^{\dagger}$ & 1.7018175e+05 & 3600.0 & - \\ [3pt] 

vsym-2& RiNNAL+ & 31, 1600, 6 & 7 & 8.98e-07 & 1.5515539e+05 & 93.9 &16.4 \\ 
$n=1225$ & SDPNAL+ & 108, 191, 3902 & 1183& 4.45e-04$^{\dagger}$ & 1.5638172e+05 & 3600.0 & - \\ [3pt] 

vsym-3& RiNNAL+ & 36, 1850, 12 & 7 & 8.40e-07 & 1.6989411e+05 & 121.9 &33.0 \\ 
$n=1225$ & SDPNAL+ & 181, 254, 3767 & 427& 2.47e-04$^{\dagger}$ & 1.7208258e+05 & 3600.0 & - \\ [3pt] 

vsym-4& RiNNAL+ & 77, 3900, 20 & 10 & 8.61e-07 & 1.6000112e+05 & 242.9 &62.3 \\ 
$n=1225$ & SDPNAL+ & 109, 183, 4278 & 1190& 5.88e-04$^{\dagger}$ & 1.6147984e+05 & 3600.0 & - \\ [3pt] 

vsym-5& RiNNAL+ & 426, 21350, 91 & 14 & 9.77e-07 & 1.5072543e+05 & 1227.3 &260.6 \\ 
$n=1225$ & SDPNAL+ & 93, 167, 4196 & 1189& 1.66e-03$^{\dagger}$ & 1.5231965e+05 & 3600.0 & - \\ [3pt] 

vsym-6& RiNNAL+ & 244, 12250, 62 & 6 & 9.84e-07 & 1.7481705e+05 & 747.3 &187.4 \\ 
$n=1225$ & SDPNAL+ & 108, 196, 4062 & 375& 1.73e-04$^{\dagger}$ & 1.7651275e+05 & 3600.0 & - \\ [3pt] 

vsym-7& RiNNAL+ & 16, 850, 4 & 5 & 8.15e-07 & 1.5365828e+05 & 55.8 &11.7 \\ 
$n=1225$ & SDPNAL+ & 119, 222, 4350 & 2& 8.18e-06$^{\dagger}$ & 1.5365163e+05 & 3600.0 & - \\ [3pt] 

vsym-8& RiNNAL+ & 78, 3950, 16 & 7 & 6.98e-07 & 1.7840805e+05 & 224.7 &41.8 \\ 
$n=1225$ & SDPNAL+ & 108, 184, 4116 & 1182& 2.55e-04$^{\dagger}$ & 1.7990365e+05 & 3600.0 & - \\ [3pt] 

vsym-9& RiNNAL+ & 26, 1350, 5 & 8 & 7.78e-07 & 1.5342806e+05 & 81.9 &14.5 \\ 
$n=1225$ & SDPNAL+ & 108, 170, 4104 & 466& 6.06e-04$^{\dagger}$ & 1.5519319e+05 & 3600.0 & - \\ [3pt] 

vsym-10& RiNNAL+ & 16, 850, 3 & 6 & 2.56e-07 & 1.7824801e+05 & 57.4 &9.7 \\ 
$n=1225$ & SDPNAL+ & 112, 192, 4450 & 4& 2.48e-05$^{\dagger}$ & 1.7843544e+05 & 3600.0 & - \\ [3pt] 

sym-1& RiNNAL+ & 91, 4600, 18 & 511 & 7.22e-07 & 1.4089986e+04 & 566.1 &21.5 \\ 
$n=1225$ & SDPNAL+ & 205, 218, 2653 & 509& 1.08e-06 & 1.4089986e+04 & 1968.0 & - \\ [3pt] 

sym-2& RiNNAL+ & 75, 3800, 15 & 508 & 9.86e-07 & 1.4080256e+04 & 387.3 &18.8 \\ 
$n=1225$ & SDPNAL+ & 168, 168, 2505 & 506& 1.45e-06 & 1.4080256e+04 & 1757.0 & - \\ [3pt] 

sym-3& RiNNAL+ & 97, 4900, 20 & 510 & 5.92e-07 & 1.4118326e+04 & 622.1 &25.0 \\ 
$n=1225$ & SDPNAL+ & 158, 168, 2367 & 508& 1.59e-06 & 1.4118326e+04 & 1821.3 & - \\ [3pt] 

sym-4& RiNNAL+ & 76, 3850, 16 & 508 & 9.83e-07 & 1.4041294e+04 & 403.3 &19.6 \\ 
$n=1225$ & SDPNAL+ & 211, 218, 2945 & 506& 1.10e-06 & 1.4041294e+04 & 2196.8 & - \\ [3pt] 

sym-5& RiNNAL+ & 96, 4850, 20 & 509 & 9.28e-07 & 1.4111671e+04 & 604.8 &24.4 \\ 
$n=1225$ & SDPNAL+ & 198, 210, 2661 & 507& 1.08e-06 & 1.4111670e+04 & 2068.9 & - \\ [3pt] 

sym-6& RiNNAL+ & 81, 4100, 17 & 509 & 9.09e-07 & 1.4047336e+04 & 444.9 &20.5 \\ 
$n=1225$ & SDPNAL+ & 107, 114, 1852 & 507& 1.18e-06 & 1.4047336e+04 & 1470.9 & - \\ [3pt] 

sym-7& RiNNAL+ & 84, 4250, 17 & 508 & 7.78e-07 & 1.4117649e+04 & 483.2 &21.0 \\ 
$n=1225$ & SDPNAL+ & 271, 271, 2881 & 506& 1.24e-06 & 1.4117649e+04 & 2334.6 & - \\ [3pt] 

sym-8& RiNNAL+ & 75, 3800, 15 & 507 & 6.91e-07 & 1.4152284e+04 & 382.5 &18.4 \\ 
$n=1225$ & SDPNAL+ & 168, 181, 2544 & 505& 1.38e-06 & 1.4152284e+04 & 1891.4 & - \\ [3pt] 

sym-9& RiNNAL+ & 84, 4250, 17 & 508 & 7.01e-07 & 1.4163208e+04 & 480.9 &20.6 \\ 
$n=1225$ & SDPNAL+ & 210, 216, 2950 & 506& 9.89e-07 & 1.4163208e+04 & 2136.0 & - \\ [3pt] 

sym-10& RiNNAL+ & 86, 4350, 18 & 508 & 7.85e-07 & 1.4125583e+04 & 465.2 &22.1 \\ 
$n=1225$ & SDPNAL+ & 222, 228, 2922 & 506& 1.12e-06 & 1.4125582e+04 & 2229.1 & - \\ [3pt] 

\end{longtable}
\end{footnotesize}

\end{document}